\begin{document}

\title[Mass Drop and Multiplicity]{Mass Drop and Multiplicity in Mean Curvature Flow}

\author{Alec Payne}
\address{Courant Institute, New York University, New York City, NY 10012}
\email{ajp697@nyu.edu}

\date{}
%\linespread{1}
%\keywords{}

\begin{abstract}  
Brakke flow is defined with a variational inequality, which means it may have discontinuous mass over time, i.e.\ have mass drop. It has long been conjectured that the Brakke flow associated to a nonfattening level set flow has no mass drop and achieves equality in the Brakke inequality. Under natural assumptions, we show that a flow has no mass drop if and only if it satisfies the multiplicity one conjecture $\mathcal{H}^n$-a.e. One application is that there is no mass drop for level set flows with mean convex neighborhoods of singularities, and a generic flow has no mass drop until there is a higher multiplicity planar tangent flow. Also, if a nonfattening flow has no higher multiplicity planes as limit flows, then each limit flow has no mass drop. We upgrade these results to equality in the Brakke inequality for certain important cases. We show that nonfattening flows with three-convex blow-up type are Brakke flows with equality. This includes flows with generic singularities in dimension three and flows with mean convex neighborhoods of singularities in dimension four.
\end{abstract}
%Brakke flow is defined with a variational inequality, which means it may have discontinuous mass over time, i.e.\ have mass drop. It has long been conjectured that the Brakke flow associated to a nonfattening level set flow has no mass drop and achieves equality in the Brakke inequality. Under natural assumptions, we show that a flow has no mass drop if and only if it satisfies the multiplicity one conjecture $\mathcal{H}^n$-a.e., and these conditions are equivalent to improved partial regularity. One application is that there is no mass drop for level set flows with mean convex neighborhoods of singularities, and a generic flow has no mass drop until there is a higher multiplicity planar tangent flow. Also, if a nonfattening flow has no higher multiplicity planes as limit flows, then each limit flow has no mass drop. We upgrade these results to equality in the Brakke inequality for certain important cases. We show that nonfattening flows with three-convex blow-up type are Brakke flows with equality. This includes flows with generic singularities in dimension three and flows with mean convex neighborhoods of singularities in dimension four.
\maketitle
\vspace{-.2in}
\section{Introduction}
%\subsection{Background}\hfill\\
%\vspace{-.2in}

Mean curvature flow is the motion of hypersurfaces with normal velocity equal to the mean curvature. The flow is poorly understood for an arbitrary initial condition, even in $\bR^3$. Thus far, it is only tractable under assumptions on the curvature or the entropy. 

Mean curvature flow holds promise for geometric and topological applications. Some recent applications include classifying two-convex hypersurfaces~\cite{HuiskenSinestrari09, HaslhoferKleiner172, BrendleHuisken16, BrendleHuisken18}, classifying low entropy surfaces~\cite{BernsteinWang17, BernsteinWang18, coldingilmanenminicozziwhite13, mramorwang18}, restricting the topology of moduli spaces of two-convex and low entropy hypersurfaces~\cite{BuzanoHaslhoferHersh16, buzanohaslhoferhersh16b, mramorwang2020topological, Mramorunknottedness20}, solving the Schoenflies problem for low entropy three-spheres~\cite{ChodoshChoiMantoulidisSchulze20, bernsteinwang2020}, constructing new minimal surfaces~\cite{haslhoferketover17, song19}, and proving isoperimetric inequalities~\cite{schulze2020optimal, hershkovits2017isoperimetric}. Almost all of these applications use weak solutions to define the flow after the first singular time. The primary obstructions to improving these results is our understanding of weak solutions and potential singularities.

There are two main notions of weak solution for a generic mean curvature flow: Brakke flow and level set flow.\footnote{Flow with surgery is also a type of weak solution, but it is hampered by curvature restrictions on the initial condition. It has not been constructed for a generic initial condition.} One major difference between these two flows is the fact that Brakke flow may suddenly vanish, i.e.\ have mass drop, while level set flow may fatten, i.e.\ attain positive measure. The discrepancy between these two flows is related to important questions about mean curvature flow, like the multiplicity one conjecture and the partial regularity of the flow. In 2006, Metzger-Schulze proved that there is no mass drop for mean convex flow and that equality is achieved in the integral version of the Brakke inequality~\cite{MetzgerSchulze08}. This means that  Brakke flow and level set flow give essentially the same theory in the mean convex setting. In this paper, we will extend Metzger-Schulze's result to the non-mean convex setting, though we use different techniques. This will be done by relating various open problems in mean curvature flow to each other, which will allow us to prove no mass drop and equality in the Brakke inequality for important non-mean convex cases.

Our main result says that an $n$-dimensional Brakke flow in $\bR^{n+k}$, under natural assumptions, has no mass drop if and only if it satisfies the multiplicity one conjecture for $\cH^n$-a.e.\ $x \in \bR^{n+k}$ at each time (Theorem \ref{theorem main result}). Moreover, this is equivalent to the partial regularity condition that the flow is fully smooth $\cH^n$-a.e.\ for each time.  The assumptions in Theorem \ref{theorem main result} hold in particular for any flow constructed by elliptic regularization with smooth closed initial condition (see Theorem \ref{theorem unit density for nonfattening}). As a consequence of Theorem \ref{theorem main result}, we find that a generic level set flow will have a canonical associated Brakke flow with no mass drop, at least until the time at which there is a higher multiplicity planar tangent flow (Corollary \ref{corollary general assumption no mass drop}, Corollary \ref{corollary mean convex neighborhoods of singularities}). Also, level set flows with mean convex neighborhoods of singularities have no mass drop. We then strengthen the no mass drop condition to equality in the integral version of the Brakke flow inequality in some important cases. Under a uniqueness assumption for the level set flow, flows with three-convex blow-up type are Brakke flows with equality in the integral sense (Theorem \ref{theorem main assumption BFE before tau}). Particular examples include flows with generic singularities in $\bR^3$ and flows with mean convex neighborhoods of singularities in $\bR^4$ (Corollary \ref{corollary two convex three convex BFE}). Using Theorem \ref{theorem main result} and Theorem \ref{theorem main assumption BFE before tau}, we rule out quasistatic limit flows of a Brakke flow under a multiplicity one assumption on the Brakke flow. Limit flows at a spacetime point $(x,t)$ have no mass drop if $t$ is before the time there is a higher multiplicity planar limit flow (Corollary \ref{corollary limit flows of BFEs}). Moreover, these limit flows are Brakke flows with equality so long as they are smooth at all but countably many times. Whether a flow is a Brakke flow with equality is closely related to the size of the set of worldlines which originate at singularities. This relationship will be expounded upon in an upcoming paper~\cite{Payne20b}. 

Recent breakthroughs in non-mean convex mean curvature flow will be used to prove some results in this paper. These include Hershkovits-White's result that level set flows with mean convex neighborhoods of singularities are nonfattening~\cite{HershkovitsWhite17}, Choi-Haslhofer-Hershkovits-White's result that level set flows with only spherical singularities and neckpinches are nonfattening~\cite{ChoiHaslhoferHersh18, ChoiHaslhoferHershWhite19}, and Colding-Minicozzi's stratification of the singular set of Brakke flows with generic singularities~\cite{ColMin16, ColMin19}. Some recent breakthroughs also provide examples of flows to which our results apply. These include low entropy flows with only neckpinches and spherical singularities~\cite{bernsteinwang171, BernsteinWang18, bernsteinwang2020, ChodoshChoiMantoulidisSchulze20}, as well as Chodosh-Choi-Mantoulidis-Schulze's generic perturbations of closed surfaces in $\bR^3$, assuming some outstanding conjectures~\cite{ChodoshChoiMantoulidisSchulze20}.

\subsection{Definitions and Conjectures}\hfill\\
\vspace{-.2in}

Most theorem statements will be written from the perspective of both Brakke flow and level set flow. The assumptions on level set flow are more involved, so we define our terms here. See Section \ref{section preliminaries} for background concepts and definitions.

A pair $(M_0, \mu_t)$ satisfies the \textbf{General Assumption} if $M_0$ is a smooth closed embedded hypersurface in $\bR^{n+1}$, the level set flow of $M_0$ is nonfattening, and $\mu_t$ is the unit density Brakke flow associated by elliptic regularization (see Theorem \ref{theorem unit density for nonfattening}).
In more detail:

\noindent \textbf{General Assumption:}

\noindent We say that the pair $(M_0, \mu_t)$ satisfies the General Assumption for $t<T$ if the following holds:
\begin{enumerate}
    \item $M_0 \subset \bR^{n+1}$ is a smooth closed embedded connected hypersurface, $D_0$ is a bounded connected open set with $\dd D_0= M_0$,
    \item The level set flow of $M_0$ is nonfattening for $t<T$, i.e.\ for each $t \in [0, T)$, the level set flow $u: \bR^{n+1} \times \bR_{\geq 0} \to \bR$ with initial conditions $\{u(\cdot, 0) =0\} = M_0$ and $\{u(\cdot, 0) >0\}= D_0$ satisfies
    $$\cH^{n+1}(\{u(\cdot, t)=0\})=0$$
    \item By elliptic regularization~\cite{Ilmanen94}, $\{u(\cdot, t)>0\}$ has finite perimeter for $t \geq 0$, and 
    $$\mu_{t} := \cH^{n} \lfloor  \dd^*\{u(\cdot, t)>0\}$$
    is an integral unit density Brakke flow, where $\dd^*$ denotes the reduced boundary.\hfill\\% (see Theorem \ref{theorem unit density for nonfattening}).\hfill\\
\end{enumerate}
%\vspace{-.1in}

We will use the notation that $\mu(\phi)$ denotes the integral of $\phi$ with respect to the Radon measure $\mu$, and $\bf{M}[\mu_{t}]$ denotes the mass of the Radon measure $\mu_{t}$, i.e.\ $\bf{M}[\mu_{t}] = \mu_{t}(1)$. The vector $\vec{H}_{\mu}$ denotes the generalized mean curvature vector of the varifold $V_{\mu}$, when it exists. 

We now state a few conjectures that motivate this work. In this paper, we will relate each of these conjectures to each other in some way. These conjectures are stated for flows with codimension one, but most can be written analogously in higher codimension. Many can also be written with assumptions on just a Brakke flow, instead of on a level set flow. The first conjecture is the multiplicity one conjecture~\cite[pg. 7]{Ilmanen95}~\cite[\#2]{ilmanen2003problems}, perhaps the most famous open problem in mean curvature flow. The multiplicity one conjecture is central to nearly all major open problems in mean curvature flow.

\begin{cj}[Multiplicity One]\label{Conjecture multiplicity one}
Under the General Assumption, each tangent flow of $\mu_t$ is multiplicity one. 
%The $\cH^n$-a.e.\ multiplicity one conjecture says that for each  
\end{cj}
Tangent flows are themselves Brakke flows and are the parabolic analogue of the approximate tangent spaces of a rectifiable varifold. By multiplicity one, we mean that the tangent flow may be written as $\cH^n \lfloor M_t$ for some flow of sets $M_t$. So, multiplicity one tangent flows are unit density at each time. 

The next conjectures we list, Conjectures \ref{Conjecture A}, \ref{Conjecture B}, and \ref{Conjecture C}, can be found in the appendix of Ilmanen's monograph on elliptic regularization~\cite{Ilmanen94}. They form the heart of this paper.

\begin{cj}[Better Partial Regularity]\label{Conjecture A}
Under the General Assumption, $$\cH^{n}(\sing\mu)=0$$
where $\sing \mu = \spt \mu \setminus \reg \mu$ with $\reg \mu$ the smooth part of $\spt \mu$, the spacetime support. 
\end{cj}

\begin{cj}[No Mass Drop]\label{Conjecture B}
Under the General Assumption, $t \mapsto \bf{M}[\mu_{t}]$ is continuous.
\end{cj}
For a compact initial condition, Conjecture \ref{Conjecture B} is in fact equivalent to the condition that $t \mapsto \mu_t(\phi)$ is continuous for each $\phi \in C^2_c(\bR^{n+1}, \bR_{\geq 0})$ (see Proposition \ref{proposition mass continuous implies mass measure continuous}).
\begin{cj}[Equality in Brakke's Inequality]\label{Conjecture C}
Under the General Assumption, $\mu_{t}$ is a Brakke flow with equality for a.e.\ $t \geq 0$. That is, for a.e.\ $t \geq 0$ and each $\phi \in C^2_c(\bR^{n+1}, \bR_{\geq 0})$,
$$\ov{D}_t \mu_t(\phi) = \int -\phi |\vec{H}_{\mu_t}|^2 + \na \phi \cdot \vec{H}_{\mu_t} \,d\mu_t$$
where $\ov{D}_t$ is the upper derivative and $\vec{H}_{\mu_t}$ is the generalized mean curvature vector for $\mu_t$, when it exists (see Section \ref{section preliminaries Brakke flow}).
\end{cj}

In this paper, we will relate versions of Conjectures \ref{Conjecture multiplicity one}, \ref{Conjecture A}, and \ref{Conjecture B} (see Theorem \ref{theorem main result}). It is generally unknown precisely how Conjectures \ref{Conjecture B} and \ref{Conjecture C} are related, except under countability assumptions on the singular times (see Theorem \ref{theorem main assumption BFE before tau}). 

The next conjecture is implicit in the work of Metzger-Schulze, who proved it in the mean convex case~\cite{MetzgerSchulze08}. It implies Conjectures \ref{Conjecture B} and \ref{Conjecture C}.

\begin{cj}[Integral Brakke Equality]\label{Conjecture D}
Under the General Assumption, for each $s,t \geq 0$ and each $\phi \in C^2_c(\bR^{n+1}, \bR_{\geq 0})$,
$$\mu_t(\phi) - \mu_s(\phi) = \int_{s}^t \int -\phi |\vec{H}_{\mu_{s'}}|^2 + \na \phi \cdot \vec{H}_{\mu_{s'}} \,d\mu_{s'} ds'$$
%where $\vec{H}_t$ denotes the generalized mean curvature vector for $\mu_t$ (where it exists, see Section \ref{section preliminaries}).
\end{cj}

Assuming Conjecture \ref{Conjecture D}, $t\mapsto \mu_t(\phi)$ is absolutely continuous. There are examples of non-integral Brakke flows which satisfy Conjecture \ref{Conjecture B} but not Conjecture \ref{Conjecture D}. The idea is to choose a smooth mean curvature flow and assign a spatially constant density $\te(t)$ at time $t$, such that $t \mapsto \te(t)$ is a continuous nonincreasing positive function which is not absolutely continuous. This construction gives a non-integral Brakke flow which has continuous but not absolutely continuous mass, meaning it satisfies Conjecture \ref{Conjecture B} but not Conjecture \ref{Conjecture D}. %It is unknown if Conjecture \ref{Conjecture B} implies Conjecture \ref{Conjecture D} under an integrality assumption. 

The next conjecture is closely related to a conjecture of Ilmanen's~\cite[p.73]{Ilmanen94} and was stated explicitly in the work of Hershkovits-White~\cite{HershkovitsWhite17}.

\begin{cj}[Uniqueness of Nonfattening Flows]\label{conjecture E}
Suppose $M_0$ is a smooth closed embedded hypersurface. Then,
$$T_{\disc} = T_{\fat}$$
where $T_{\disc}$ is the first time that the level set flow, inner flow, and outer flow are not all the same (see Section \ref{subsection preliminaries of level set flow}) and $T_{\fat}$ is the first fattening time of the level set flow.
\end{cj}
Conjecture \ref{conjecture E} says that non-uniqueness for level set flow occurs precisely when the flow fattens. This holds when the level set flow has mean convex neighborhoods of singularities~\cite{HershkovitsWhite17} or when it has only spherical singularities and neckpinches~\cite{ChoiHaslhoferHersh18, ChoiHaslhoferHershWhite19}. Conjecture \ref{conjecture E} is related to questions about multiplicity. In this paper, we will often assume Conjecture \ref{conjecture E} together with the assumption that no blow-ups of $\mu_t$ are higher multiplicity planes. This multiplicity one assumption is possibly too weak to imply Conjecture \ref{conjecture E} though, since it only provides information on $\mu$, not on the entire level set flow a priori.
%Proving this conjecture would likely require ruling out the formation of multiplicity for elliptic regularization approximators converging to $\mu_t$.
\subsection{Statements of Results}\hfill\\
\vspace{-.2in}

Let $\{\mu_t\}_{t \geq 0}$ be an integral $n$-dimensional Brakke flow in $\bR^{n+k}$ with bounded area ratios. The assumption of bounded area ratios is natural and holds in particular for any flow with a smooth initial condition (see Theorem \ref{theorem huisken monotonicity}). 

Define
$$\cT_{\mult}(t) := \{x \in \bR^{n+k}\,|\,\exists \,\text{a planar tangent flow at }(x,t)\, \text{with multiplicity $\geq 2$}\}$$
where the tangent flows are taken with respect to the flow $\mu_t$.

Throughout this paper, we will often assume the flows are unit regular. This is a natural assumption since any enhanced motion constructed via elliptic regularization is unit regular (see Theorem \ref{theorem unit density for nonfattening}). Unit regularity rules out gratuitous vanishing of smooth parts of the flow and is equivalent to assuming that there are no quasistatic multiplicity one planes as tangent flows. %Also, a flow is unit regular if each spacetime point of Gaussian density one has a static multiplicity one plane as a tangent flow.

We define $(\sing^+\hspace{-.04in}\mu)_t$ to be the set of points in the time-$t$ slice of $\spt \mu$ which are not fully smooth (see Section \ref{section singularities mean curvature flow}). For a unit density flow, a point is fully smooth if the support is smooth in a spacetime neighborhood of the point.

The idea of our main result, Theorem \ref{theorem main result}, is that a flow has mass drop only when there is a macroscopic drop in density or when smooth regions suddenly vanish. Drops in density and vanishing of smooth regions are detected by planar tangent flows, either high multiplicity or quasistatic. 

\begin{thm}\label{theorem main result}
Let $T>0$, and let $\{\mu_t\}_{t \geq 0}$ be an integral $n$-dimensional Brakke flow in $\bR^{n+k}$, which is unit density, is unit regular, and has bounded area ratios. Then, the following are equivalent:
\begin{enumerate}
    \item\label{condition 1} For each $t \in (0,T)$, $\cH^n(\cT_{\mult}(t))=0$,
    \item\label{condition 2} For each $t \in (0,T)$, $\cH^n((\sing^+\hspace{-.04in}\mu)_t)=0$,
    \item\label{condition 3} For each $\phi \in C^2_c(\bR^{n+k}, \bR_{\geq 0})$, $t \mapsto \mu_t(\phi)$ is continuous for $t \in (0,T)$.
\end{enumerate}
\end{thm}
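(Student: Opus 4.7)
I plan to establish the equivalence via the two bi-implications (1) $\Leftrightarrow$ (2) and (2) $\Leftrightarrow$ (3).

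The inclusion $\cT_{\mult}(t) \subseteq (\sing^+\mu)_t$ is immediate, since a fully smooth spacetime point admits a unique multiplicity-one planar tangent flow; this gives (2) $\Rightarrow$ (1). For (1) $\Rightarrow$ (2) I would combine rectifiability of $\mu_t$ with unit regularity. At $\cH^n$-a.e.\ $y \in \spt\mu_t$, the integral varifold has an approximate tangent plane $P$ of unit density. Self-similarity forces any past tangent flow at $(y,t)$ to contain the static plane $P$ as a component, and any additional component must be a shrinker collapsing to the origin at $s=0$. By White's parabolic stratification of the singular set, the spatial slice of points carrying such non-planar extras has dimension at most $n-1$, hence is $\cH^n$-null. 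Thus at $\cH^n$-a.e.\ $y \in \spt\mu_t$ every past tangent flow is a plane of some multiplicity $k_y$, and unit regularity says $k_y=1$ if and only if $(y,t)$ is fully smooth. This gives $(\sing^+\mu)_t \subseteq \cT_{\mult}(t) \cup N$ with $\cH^n(N)=0$, and (1) closes the argument.

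For (2) $\Rightarrow$ (3) I argue by contradiction. Suppose $\mu_{t_0^-}(\phi) > \mu_{t_0}(\phi)$ at some $t_0 \in (0,T)$. Weak compactness for Brakke flows yields an integral limit $\nu_0 = \lim_{k} \mu_{t_0 - \varepsilon_k} \geq \mu_{t_0}$ with $\nu_0(\phi) = \mu_{t_0^-}(\phi) > \mu_{t_0}(\phi)$. Hence the defect $\nu_0 - \mu_{t_0}$ is a nonzero positive Radon measure whose support $S$, being that of a nontrivial integral rectifiable varifold, has positive $\cH^n$-measure. Each $y \in S$ lies in $\spt\mu \cap \{t_0\}$ as a limit of support points from below, and its Gaussian density $\Theta(y,t_0)$ strictly exceeds the spatial density of $\mu_{t_0}$ at $y$. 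Unit regularity then forces $y$ not to be fully smooth, so $S \subseteq (\sing^+\mu)_{t_0}$ has positive $\cH^n$-measure, contradicting (2).

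The direction (3) $\Rightarrow$ (1), closing the cycle, will be the main obstacle. If $\cH^n(\cT_{\mult}(t_0)) > 0$, then at each $y \in \cT_{\mult}(t_0)$ a planar past tangent flow of multiplicity $k_y \geq 2$ gives $r^{-n}\mu_{t_0 + r^2 s}(B_r(y)) \to k_y \omega_n$ along rescaling radii $r_j \to 0$ for any fixed $s<0$, while $r^{-n}\mu_{t_0}(B_r(y)) \to \omega_n$ by unit density. To upgrade these scale-dependent pointwise excesses into a macroscopic drop $\mu_{t_0^-}(\phi) - \mu_{t_0}(\phi) > 0$ for one fixed test function $\phi$, I would produce a disjoint Vitali-type cover of $\cT_{\mult}(t_0)$ by balls at coherent scales $r_i$, apply quantitative Huisken monotonicity to transfer the Gaussian density excess at each center into a genuine ball-integral excess at an appropriate time offset $t_0 + r_i^2 s$, and then sum along the cover. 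The technical heart is choosing scales and time offsets uniformly across the cover so that the aggregated excess survives as an honest jump of $t \mapsto \mu_t(\phi)$ at $t_0$, rather than being washed out by variations of $\mu_t$ across the different chosen times.
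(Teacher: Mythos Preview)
Your argument for $(1)\Leftrightarrow(2)$ is essentially the paper's: both rest on White's stratification, which says that for each $t$ the set $\fM(t)$ of points with \emph{no} planar tangent flow has Hausdorff dimension at most $n-1$. Together with unit regularity (which kills quasistatic multiplicity-one planes) this gives the clean decomposition $\pi((\sing^+\mu)_t)=\fM(t)\sqcup\cT_{\mult}(t)$ up to how one bookkeeps, and the equivalence follows. Your detour through the approximate tangent plane of the varifold $\mu_t$ is unnecessary, and the sentence ``self-similarity forces any past tangent flow to contain the static plane $P$ as a component'' is not justified: the spatial approximate tangent plane of $\mu_t$ does not constrain the $s<0$ slices of a parabolic blow-up in the way you claim. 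Drop that step and argue directly from White.

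For $(2)\Rightarrow(3)$ your defect-measure approach is different from the paper's and has gaps: you only treat the left jump $\mu_{t_0^-}(\phi)>\mu_{t_0}(\phi)$, not the right one; and the inference ``$y\in\spt(\nu_0-\mu_{t_0})\Rightarrow\Theta(y,t_0)$ strictly exceeds the spatial density of $\mu_{t_0}$ at $y$'' needs an actual argument. The paper instead gives a direct proof (no contradiction): cover $(\sing^+\mu)_{t_0}$ by sets of total $(\text{diam}/2)^n$ less than $\varepsilon$ (possible since $\cH^n=0$), use bounded area ratios to bound the mass near the singular set by $C\varepsilon$, and use full smoothness away from it via a partition of unity. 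This handles both one-sided limits symmetrically.

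The real problem is $(3)\Rightarrow(1)$. Your Vitali scheme cannot work as stated: the balls in the cover come at different scales $r_i$, hence the ``excess'' you detect lives at different times $t_0+r_i^2 s$, and there is no mechanism to aggregate these into a jump of $t\mapsto\mu_t(\phi)$ at the single time $t_0$. You flag this yourself, but it is not a technicality---it is the whole difficulty. The paper takes a completely different route: it proves $(3)\Rightarrow(2)$ using Brakke's own regularity theorem for mass-continuous flows \cite[Theorem~6.12]{Brakke1978} (via Lahiri's formulation), which says that a unit-density integral Brakke flow with continuous $t\mapsto\mu_t(\phi)$ is fully smooth at $\cH^n$-a.e.\ point of $\spt\mu_{t_0}$ for every $t_0$. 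This deals with $\spt\mu_{t_0}$; the remaining set $G(t_0)=\pi((\spt\mu)_{t_0})\setminus\spt\mu_{t_0}$ is then shown to have $\cH^n$-measure zero by combining Lahiri's quantitative lemma on microscopic mass drops with Brakke's clearing-out lemma. You do not address $G(t_0)$ at all, and your proposed argument does not supply a substitute for the Brakke regularity input.
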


Note that if $\mu_t$ is smooth with bounded curvature until it vanishes at time $T$, then $\cH^n((\sing^+ \hspace{-.04in}\mu)_T)>0$ and $\mu_t$ has mass drop at time $T$. So, Theorem \ref{theorem main result} does not apply through time $T$ for a smooth flow which suddenly vanishes at time $T$.

By White's stratification theorem ~\cite[Theorem 9]{Wh97} (cf. \cite{CheegerHaslhoferNaber13}), for each time $t>0$, the set of points $x \in \bR^{n+k}$ such that there are only nonplanar tangent flows at $(x,t)$ has $\cH^n$ measure zero (see Theorem \ref{theorem white stratification}). Thus, condition (\ref{condition 1}) of Theorem \ref{theorem main result} is equivalent to assuming that for each $t \in (0,T)$ and for $\cH^n$-a.e.\ $x \in \bR^{n+k}$, each tangent flow at $(x,t)$ is multiplicity one. In other words, condition (\ref{condition 1}) is equivalent to assuming that the multiplicity one conjecture holds $\cH^n$-a.e.\ in each time-$t$ slice. Condition (\ref{condition 1}) may be rightfully called the $\bf{\cH^n}$\textbf{-a.e.\ multiplicity one conjecture}.

We also prove a generalization of Theorem \ref{theorem main result} without the assumption of unit regularity (see Theorem \ref{theorem main result without unit regularity}). It turns out that conditions (\ref{condition 2}) and (\ref{condition 3}) of Theorem \ref{theorem main result} hold without a unit regularity assumption, as unit regularity is effectively built into these conditions already. Without the unit regularity assumption, we must modify condition (\ref{condition 1}) to rule out quasistatic multiplicity one planes. Also, a weaker version of Theorem \ref{theorem main result} holds without a unit density assumption. The unit density assumption is effectively built into conditions (\ref{condition 1}) and (\ref{condition 2}) by definition (see Theorem \ref{theorem main result without unit regularity} and Remark \ref{remark unit density unnecessary}). 

\begin{rmk}
If $(M_0, \mu_t)$ satisfies the General Assumption for $t<T$, then condition (\ref{condition 2}) of Theorem \ref{theorem main result} is satisfied for a.e. $t \in (0,T)$ by Ilmanen's regularity result \cite[Theorem 12.9, 12.11]{Ilmanen94}. The assumption that condition (\ref{condition 2}) holds for every $t \in (0,T)$, as opposed to almost every $t$, is crucial for proving no mass drop.
\end{rmk}

Define
$$T_{\mult}:= \inf\{t >0 \,|\,\exists  \text{ a planar tangent flow at }(x,t)\text{ with multiplicity}\geq 2\}$$
The first corollary, Corollary \ref{corollary general assumption no mass drop}, is an application of Theorem \ref{theorem main result}. It says that nonfattening flows which satisfy Conjecture \ref{conjecture E} and the multiplicity one conjecture satisfy the no mass drop conjecture.

\begin{cor}\label{corollary general assumption no mass drop}
Suppose that $(M_0, \mu_t)$ satisfies the General Assumption for 
$$t< T= \min(T_{\disc}, T_{\mult})$$
%one of the following assumptions holds:
%\begin{itemize}
%    \item $(M_0, \mu_t)$ satisfies the General Assumption for $t< T= \min(T_{\disc}, T_{\mult})$, and $k=1$,
%    \item $\mu_t$ is an integral $n$-dimensional Brakke flow in $\bR^{n+k}$, which is unit density, is unit regular, has bounded area ratios, and $T= T_{\mult}$.  
%\end{itemize}
Then, for each $\phi \in C^2_c(\bR^{n+1}, \bR_{\geq 0})$, $t \mapsto \mu_t(\phi)$ is continuous for $t \in [0,T)$.
\end{cor}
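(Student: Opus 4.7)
My plan is to obtain this corollary as a direct application of Theorem \ref{theorem main result}. The key observation is that condition (\ref{condition 1}) of that theorem becomes vacuous on $(0,T_{\mult})$, so the main work consists of verifying the theorem's standing hypotheses and then handling the left endpoint $t=0$, which Theorem \ref{theorem main result} does not directly address.

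To check the hypotheses: the General Assumption directly supplies integrality and unit density of $\mu_t$. Unit regularity of a Brakke flow produced by elliptic regularization from a smooth closed initial condition is provided by Theorem \ref{theorem unit density for nonfattening}. Bounded area ratios follow from the smoothness of $M_0$ via Huisken's monotonicity (Theorem \ref{theorem huisken monotonicity}). Thus all four standing assumptions of Theorem \ref{theorem main result}---integrality, unit density, unit regularity, and bounded area ratios---are in force.

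For condition (\ref{condition 1}), the definition of $T_{\mult}$ says that no spacetime point $(x,t)$ with $t<T_{\mult}$ admits a planar tangent flow of multiplicity at least two. Since $T\leq T_{\mult}$, the set $\cT_{\mult}(t)$ is in fact empty, hence certainly $\cH^n$-null, for every $t\in(0,T)$. Theorem \ref{theorem main result} then yields continuity of $t\mapsto\mu_t(\phi)$ on $(0,T)$ for each $\phi\in C^2_c(\bR^{n+1},\bR_{\geq 0})$.

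The main obstacle is extending continuity to $t=0$, since Theorem \ref{theorem main result} produces continuity only on the open interval. Because $M_0$ is smooth and closed, short-time existence furnishes a classical mean curvature flow $(M_t)_{0\leq t<\epsilon}$ starting from $M_0$, and the constraint $T\leq T_{\disc}$ together with nonfattening ensures that the Brakke flow from elliptic regularization agrees with this classical flow in the regular regime, so $\mu_t = \cH^n\lfloor M_t$ on $[0,\epsilon)$. Continuity of the classical flow then gives $\mu_t(\phi)\to\mu_0(\phi)$ as $t\downarrow 0$, completing the argument on $[0,T)$.
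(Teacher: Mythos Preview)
There is a genuine gap in your verification of unit regularity. Theorem \ref{theorem unit density for nonfattening} asserts unit regularity only for the \emph{enhanced motion} $\ov{\mu}_t$, not for the boundary motion $\mu_t$ that appears in the General Assumption; item (4) of that theorem says $\ov{\mu}_t$ is unit regular and $\ov{\mu}_t \geq \mu_t$, but the inequality does not transfer unit regularity downward. The paper is explicit about this distinction (see the discussion preceding Proposition \ref{proposition no quasistatic mult 1 planes}), and indeed proves unit regularity of $\mu_t$ separately in Proposition \ref{proposition no quasistatic mult 1 planes}, whose argument crucially uses the hypothesis $t<T_{\disc}$ to compare $\mu_t$ with a freshly restarted enhanced motion via Lemma \ref{lemma restarting}. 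Your argument invokes $T_{\disc}$ only at the endpoint $t=0$, so as written it would purport to prove the corollary on $(0,T_{\mult})$ without any $T_{\disc}$ restriction, which the paper does not claim.

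Once you replace the appeal to Theorem \ref{theorem unit density for nonfattening} with Proposition \ref{proposition no quasistatic mult 1 planes} (valid for $t<T_{\disc}$), the rest of your outline matches the paper's proof: $\cT_{\mult}(t)=\emptyset$ for $t<T_{\mult}$, Theorem \ref{theorem main result} gives continuity on $(0,T)$, and short-time smoothness (the paper cites Lemma \ref{lemma restarting with unit density} here) handles $t=0$.
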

Corollary \ref{corollary general assumption no mass drop} may be applied to a generic flow. A generic flow is nonfattening and will satisfy Conjecture \ref{conjecture E} by Proposition \ref{proposition general assumption is generic}. So, we find that a generic flow will satisfy the no mass drop conjecture until the time there is a planar tangent flow of multiplicity $\geq 2$. Corollary \ref{corollary general assumption no mass drop} also applies to level set flows with mean convex neighborhoods of singularities. Flows with mean convex neighborhoods of singularities have no higher multiplicity planar tangent flows and satisfy Conjecture \ref{conjecture E} by White's regularity theory for mean convex flows~\cite{Wh00} and work of Hershkovits-White~\cite{HershkovitsWhite17}. As an application of Corollary \ref{corollary general assumption no mass drop}, we find that flows with mean convex neighborhoods of singularities satisfy the no mass drop conjecture, Conjecture \ref{Conjecture B}, for all time. %Under the General Assumption, $t=0$ is a continuity point for $t\mapsto \mu_t(\phi)$ as well. % White's stratification of the singular set shows that restrictions on the types of tangent flows imply improved partial regularity~\cite{Wh97}. 

%We in fact show that continuity of $t\mapsto \mu_t(\phi)$ is equivalent to improved partial regularity, a variant of Conjecture \ref{Conjecture A} (see Theorem \ref{theorem equivalence of mass continuity, codim 2}). 

%Brakke proved that unit density Brakke flows with no mass drop have improved partial regularity~\cite[Theorem 6.12]{Brakke1978}~\cite[Theorem 3.6]{Ton14} (see in particular~\cite[Theorem 9.7]{lahiri2014regularity}). He proved that for each $t \geq 0$, there exists a closed set $S_t \subset \bR^{n+1}$ with $\cH^{n}(S_t)=0$ such that for each $x \in \spt \mu_t\setminus S_t$, $(x,t)$ is a fully smooth point for the flow $\mu_t$. We show in Theorem \ref{theorem equivalence of mass continuity, codim 2} that this condition is sufficient to imply that the mass is continuous, assuming $\mu_t$ is unit density and has bounded area ratios.
%\newpage

\begin{cor}\label{corollary mean convex neighborhoods of singularities}
Let $M_0$ be a smooth closed embedded hypersurface such that one of the following assumptions holds:
\begin{itemize}
    \item The level set flow of $M_0$ has mean convex neighborhoods\footnote{In the sense of \cite{ChoiHaslhoferHershWhite19}. See the definition in Section \ref{subsection preliminaries of level set flow}.} of singularities and $T = \infty$,
    \item $M_0$ is a generic\footnote{Let N be a smooth closed embedded hypersurface, and let $f: N \times [-1,1] \to \bR^{n+1}$ be a smooth one-parameter family of graphs $f(\cdot, s)$ over $N$. There is a full measure set $J \subseteq [-1,1]$ such that for $s \in J$, $M_0 = f(N,s)$ is nonfattening and $\mu_t$ satisfies the conclusion of Corollary \ref{corollary mean convex neighborhoods of singularities} for $t \in [0, T_{\mult})$.} closed hypersurface and $T = T_{\mult}$.
\end{itemize}
Let $\mu_t$ be the unit density Brakke flow associated to $M_0$ via elliptic regularization.

Then, for each $\phi \in C^2_c(\bR^{n+1}, \bR_{\geq 0})$, $t \mapsto \mu_t(\phi)$ is continuous for $t \in [0,T)$.
\end{cor}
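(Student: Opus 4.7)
The plan is to deduce this as a direct application of Corollary \ref{corollary general assumption no mass drop}. The work is therefore entirely in verifying its hypotheses in each of the two bullet-pointed cases, namely that $(M_0,\mu_t)$ satisfies the General Assumption for $t<T$ and that $T \leq \min(T_{\disc},T_{\mult})$.

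First I would treat the mean convex neighborhoods case. Nonfattening of the level set flow of $M_0$, valid for all times, is exactly one of the main results of Hershkovits-White \cite{HershkovitsWhite17}, which provides the second clause of the General Assumption; elliptic regularization then gives the unit density integral Brakke flow $\mu_t$ of the third clause. Hershkovits-White also verify Conjecture \ref{conjecture E} in this setting, so $T_{\disc}=T_{\fat}=\infty$. For $T_{\mult}$, I would appeal to White's regularity theory for mean convex flows \cite{Wh00}: every tangent flow at a singular point of a flow with mean convex neighborhoods is a multiplicity one shrinking round cylinder or sphere, in particular never a higher multiplicity plane, so $T_{\mult}=\infty$. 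Hence $\min(T_{\disc},T_{\mult})=\infty=T$, and Corollary \ref{corollary general assumption no mass drop} directly yields continuity of $t \mapsto \mu_t(\phi)$ on $[0,\infty)$.

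For the generic case I would invoke Proposition \ref{proposition general assumption is generic} referenced in the paragraph following Corollary \ref{corollary general assumption no mass drop}: for a full measure set of $s\in [-1,1]$, the perturbation $M_0 = f(N,s)$ is nonfattening and satisfies Conjecture \ref{conjecture E}, so $T_{\disc}=T_{\fat}=\infty$. Thus $\min(T_{\disc},T_{\mult})=T_{\mult}=T$. Applying Corollary \ref{corollary general assumption no mass drop} on $[0,T_{\mult})$ gives the desired continuity statement.

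The main obstacle, such as it is, is not difficulty but bookkeeping: one must be careful that the cited results really supply both halves of the hypothesis $T\leq \min(T_{\disc},T_{\mult})$, i.e.\ that mean convex neighborhoods actually rule out \emph{higher multiplicity planar} tangent flows (not merely give smoothness of generic tangent flows), and that the generic perturbation statement in Proposition \ref{proposition general assumption is generic} delivers the uniqueness-of-weak-flows conclusion $T_{\disc}=\infty$ and not merely nonfattening. Once these two inputs are cleanly in hand, the corollary is immediate from Corollary \ref{corollary general assumption no mass drop} and no further analysis is required.
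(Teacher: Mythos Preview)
Your proposal is correct and follows essentially the same route as the paper: in the mean convex neighborhoods case you use Hershkovits--White for $T_{\disc}=\infty$ and White's local regularity theory to rule out higher multiplicity planar tangent flows (the paper phrases this via Proposition~\ref{proposition no Tmult for lsf with cylindrical singularities}), and in the generic case you use Proposition~\ref{proposition general assumption is generic} to get $T_{\disc}=\infty$, then apply Corollary~\ref{corollary general assumption no mass drop} in both cases. The only additional detail the paper makes explicit is that the passage from White's blow-up classification for the level set flow to the statement $T_{\mult}=\infty$ for the Brakke flow $\mu_t$ goes through Proposition~\ref{proposition no Tmult for lsf with cylindrical singularities}, which is exactly the bookkeeping you flag at the end.
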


Metzger-Schulze proved that mean convex flows achieve equality in the Brakke inequality, Conjecture \ref{Conjecture D}, which is a stronger condition than the no mass drop conjecture, Conjecture \ref{Conjecture B}~\cite{MetzgerSchulze08}. Their proof uses the assumption that the flow is globally mean convex, unlike our assumption of local mean convexity around singularities in Corollary \ref{corollary mean convex neighborhoods of singularities}. Metzger-Schulze's argument is difficult to localize to mean convex neighborhoods of singularities, as it would require showing that the approximators from elliptic regularization are mean convex near each singularity. This is certainly difficult in general, but it may be possible for flows with only spherical singularities and neckpinches using an argument of Schulze-Sesum \cite[Prop. 2.3]{sesumschulze2020stability} based on the resolution of the mean convex neighborhood conjecture in this case~\cite{ChoiHaslhoferHersh18, ChoiHaslhoferHershWhite19}. With that said, this is not the approach that we take in this paper. With an approach different from that of Metzger-Schulze, we ultimately conclude that Conjecture \ref{Conjecture D} holds for a general class of flows which includes as a special case the flows with only spherical singularities and neckpinches (see Corollary \ref{corollary two convex three convex BFE}). 

We say that an $n$-dimensional Brakke flow $\{\mu_t\}_{t \geq 0}$ in $\bR^{n+k}$ has a \textit{countable set of singular times} for $t<T$ if there is a cocountable set $\cI \subseteq [0, T)$ such that for each $t_0 \in \cI$, $\Te(x,t_0)\leq 1$ for each $x \in \bR^{n+k}$, where $\Te(x,t_0)$ is the Gaussian density at the spacetime point $(x,t_0)$ (see Section \ref{section singularities mean curvature flow}). 

We say that a level set flow $u: \bR^{n+1}\times \bR_{\geq 0} \to \bR$ has an \textit{open cocountable set of regular times} for $t<T$ if there is a relatively open cocountable set $\cI \subseteq [0, T)$ such that for each $t_0 \in \cI$, $\{u(\cdot, t_0)=0\}$ is a smooth embedded, possibly disconnected, hypersurface.

The next theorem, Theorem \ref{theorem main assumption BFE before tau}, says that flows satisfying one of the aforementioned countability assumptions on the singular set satisfy Conjectures \ref{Conjecture B}, \ref{Conjecture C}, and \ref{Conjecture D} until there is a planar tangent flow which is not static and multiplicity one. In fact, it implies that $t \mapsto \mu_t(\phi)$ is absolutely continuous until time $T$, improving the result of Corollary \ref{corollary general assumption no mass drop}.

The idea of Theorem \ref{theorem main assumption BFE before tau} is that the countability assumption on the singular times allows one to upgrade mass continuity from Theorem \ref{theorem main result} to absolute continuity. By proving that Conjecture \ref{Conjecture C} is true under these assumptions and then integrating the absolutely continuous mass, we find that the flows satisfy the integral Brakke equality, Conjecture \ref{Conjecture D}.

\begin{thm}\label{theorem main assumption BFE before tau}
Suppose that one of the following assumptions holds:
\begin{itemize}
  \item $(M_0, \mu_t)$ satisfies the General Assumption for $t< T= \min(T_{\disc}, T_{\mult})$, the level set flow of $M_0$ has an open cocountable set of regular times for $t<T$, and $k=1$,
  \item $\mu_t$ is an integral $n$-dimensional Brakke flow in $\bR^{n+k}$ which is unit regular, has bounded area ratios, and has a countable set of singular times for $t<T= T_{\mult}$.
\end{itemize}
Then, for $\phi \in C^2_c(\bR^{n+k}, \bR_{\geq 0})$ and $0 < s,t \leq T$,
    $$
    \mu_{t}(\phi) - \mu_{s}(\phi) = \int_{s}^t \int -\phi |\vec{H}_{\mu_{s'}}|^2 + \na \phi \cdot \vec{H}_{\mu_{s'}} \,d\mu_{s'}\,ds'
    $$
\end{thm}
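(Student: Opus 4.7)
My plan is to leverage Theorem \ref{theorem main result} to obtain continuity of $f(t) := \mu_t(\phi)$, then use the countability assumption to identify an open, cocountable set $\cI \subseteq [0, T)$ of smooth times on which the flow is classical smooth mean curvature flow, apply the first variation formula on $\cI$ to obtain $f'(t) = g(t) := \int(-\phi|\vec{H}_{\mu_t}|^2 + \na\phi \cdot \vec{H}_{\mu_t})\,d\mu_t$, and finally glue across the countably many exceptional times using continuity.

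For the continuity step, condition (\ref{condition 1}) of Theorem \ref{theorem main result} is vacuous in both bullets since $T \leq T_{\mult}$; in the first bullet the remaining hypotheses come from Theorem \ref{theorem unit density for nonfattening}, and in the second unit regularity and bounded area ratios are assumed while unit density follows from integrality together with $\Te \leq 1$ on a dense set of times. Hence $f$ is continuous on $(0, T)$. For the smoothness step, in the first bullet take $\cI$ to be the given open cocountable set of regular times of the level set flow; nonfattening and smoothness of each $\{u(\cdot, t_0) = 0\}$ force $\mu_{t_0}$ to coincide with the unit density varifold on a smooth embedded hypersurface, and uniqueness of smooth MCF plus Brakke's local regularity extend this to classical smooth MCF on each connected component of $\cI$. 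In the second bullet, let $\cI$ be the set of times at which $\mu$ is smooth in an open spacetime neighborhood of every point in its support; this set is open by definition and contains $\{t_0 < T : \Te(\cdot, t_0) \leq 1 \text{ everywhere}\}$ via unit regularity, integrality, and Brakke's local regularity theorem, hence is cocountable. On each component of $\cI$ the classical first variation formula gives $f'(t) = g(t)$, and $g \in L^1_{\mathrm{loc}}(0, T)$ follows from Brakke's inequality applied with cutoffs of the form $\psi^2$ combined with Cauchy-Schwarz (one reduces from general $\phi \in C^2_c$ by approximation).

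For the gluing step, set $h(x) := f(x) - \int_{s_0}^{x} g(s')\,ds'$ for some fixed $s_0 \in \cI$. Then $h$ is continuous on $(0, T)$ and $h'(x) = 0$ on $\cI$, so $h$ is locally constant on the open set $\cI$. Since $\cI$ has at most countably many connected components and $[0, T) \setminus \cI$ is countable, the image $h((0, T))$ is a countable subset of $\bR$; being also the continuous image of an interval, it is connected, hence a single point. Thus $h$ is constant, which is the desired integral identity.

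The main obstacle is the smoothness step: passing from pointwise information at a single time (the condition $\Te(\cdot, t_0) \leq 1$, or smoothness of $\{u(\cdot, t_0) = 0\}$) to smoothness of the flow in an open spacetime neighborhood. This relies crucially on Brakke's local regularity theorem together with unit regularity to exclude both higher multiplicity and phantom (density-zero) points on the regular time slices. A secondary technicality is the $L^1_{\mathrm{loc}}$ bound on $g$, which forces one to first handle test functions of the form $\psi^2$ before extending to general $\phi \in C^2_c$ by approximation.
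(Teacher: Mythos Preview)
Your proposal is correct and follows essentially the same architecture as the paper: obtain continuity of $f(t)=\mu_t(\phi)$ from Theorem~\ref{theorem main result} (or its non--unit-density variant), use the countability hypothesis via Corollary~\ref{corollary main assumption implies vanishing singular set each time} and Proposition~\ref{proposition conjecture C holds for countable singular times} to get $f'(t)=\cB(\mu_t,\phi)$ at all but countably many times, and then conclude the integral identity.

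The one genuine difference is the gluing step. The paper proves absolute continuity of $f$ via the Banach--Zaretskii theorem (continuity, bounded variation from Lemma~\ref{lemma properties of BF}, and the Luzin~N property from differentiability off a countable set), and then integrates. You instead set $h(t)=f(t)-\int_{s_0}^t g$, observe that $h$ is continuous, locally constant on the open cocountable set $\cI$, hence has countable image, and conclude $h$ is constant since the continuous image of an interval is connected. This is a clean, more elementary substitute for Banach--Zaretskii in this particular situation; the paper's route has the advantage of yielding absolute continuity of $f$ as an intermediate statement of independent interest, while yours avoids naming an external theorem and makes the role of countability (as opposed to mere measure zero) of the singular times completely transparent.

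One small inaccuracy: in the second bullet your claim that unit density follows from integrality plus $\Te\le 1$ on a dense set of times is not justified at the countably many exceptional times. This is harmless, since the implication $(\ref{condition 1})\Rightarrow(\ref{condition 3})$ actually holds without unit density (this is the content of Theorem~\ref{theorem main result without unit regularity}), and nothing else in your argument uses unit density.
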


%If the level set flow of $M_0$ has only spherical singularities and neckpinches, it is nonfattening with $T_{\disc} = \infty$, and we may associate a unit density Brakke flow via elliptic regularization~\cite{ChoiHaslhoferHersh18, ChoiHaslhoferHershWhite19, HershkovitsWhite17}. 
The next corollary is an application of Theorem \ref{theorem main assumption BFE before tau}. It uses Colding-Minicozzi's stratification theory for flows with generic singularities to find examples of flows with countable sets of singular times(see Proposition \ref{proposition examples of main assumption})~\cite{ColMin16, ColMin19} . It also uses the recent breakthrough that level set flows with only neckpinches and spherical singularities are nonfattening and satisfy $T_{\disc} = \infty$~\cite{ChoiHaslhoferHersh18, ChoiHaslhoferHershWhite19, HershkovitsWhite17}.%We find that level set flows $M_0$ with three-convex cylindrical singularities achieve equality in the Brakke inequality.

\begin{cor}\label{corollary two convex three convex BFE}
Let $M_0$ be a smooth closed embedded hypersurface such that one of the following assumptions holds:
\begin{itemize}
    \item The level set flow of $M_0$ has only spherical singularities and neckpinches,\footnote{In the sense of Choi-Haslhofer-Hershkovits-White~\cite{ChoiHaslhoferHershWhite19}. See Section \ref{subsection preliminaries of level set flow}.} and $T = \infty$,
    \item The level set flow of $M_0$ has mean convex neighborhoods of singularities in $\bR^{n+1}$, $n+1 \leq 4$, and $T=\infty$,
    \item The level set flow of $M_0$ has three-convex blow-up type\footnote{In the sense that all blow-ups are the blow-ups of three-convex flow. See Section \ref{subsection preliminaries of level set flow}.} and $T = T_{\disc}$.
\end{itemize}
Let $\mu_t$ be the unit density Brakke flow associated to $M_0$ via elliptic regularization.

Then, for $\phi \in C^2_c(\bR^{n+1}, \bR_{\geq 0})$ and $0 \leq s,t \leq T$,
    $$\mu_{t}(\phi) - \mu_{s}(\phi) = \int_{s}^t \int -\phi |\vec{H}_{\mu_{s'}}|^2 + \na \phi \cdot \vec{H}_{\mu_{s'}} \,d\mu_{s'}\,ds'$$
\end{cor}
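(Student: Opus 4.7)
The plan is to verify, in each of the three bulleted cases, the hypotheses of the first bullet of Theorem~\ref{theorem main assumption BFE before tau} (the case $k=1$), and then to invoke that theorem directly. That is, I need to check (i) that $(M_0,\mu_t)$ satisfies the General Assumption on the relevant time interval, (ii) that $T \le \min(T_{\disc}, T_{\mult})$, and (iii) that the level set flow has an open cocountable set of regular times for $t<T$.

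The nonfattening and $T_{\disc}$ claims come from the recent breakthroughs cited in the introduction. In the first case, Choi–Haslhofer–Hershkovits–White~\cite{ChoiHaslhoferHersh18, ChoiHaslhoferHershWhite19} show that level set flows with only spherical singularities and neckpinches are nonfattening and have mean convex neighborhoods of singularities, and then Hershkovits–White~\cite{HershkovitsWhite17} gives $T_{\disc}=\infty$. In the second case, the same Hershkovits–White result directly gives nonfattening and $T_{\disc}=\infty$. In the third case, $T=T_{\disc}$ by hypothesis and nonfattening for $t<T_{\disc}$ is built into the definition of $T_{\disc}$ (the inner and outer flows agree up to fattening). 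For $T_{\mult}$: in all three settings, every blow-up is either a round sphere, a (possibly generalized) cylinder of the appropriate kind, or an ancient three-convex flow; none of these are higher multiplicity planes, so $T_{\mult}=\infty$ in the first two cases and $T_{\mult}\ge T_{\disc}$ in the third. In the mean convex neighborhood case in $\bR^{n+1}$ with $n+1\le 4$, higher multiplicity planar tangent flows are also excluded by White's regularity theory~\cite{Wh00}, since singular points in mean convex neighborhoods have only cylindrical or spherical blow-ups in these dimensions.

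For hypothesis (iii), the required open cocountable set of regular times is precisely the content of the stratification of the singular set provided by Colding–Minicozzi~\cite{ColMin16, ColMin19} together with the fact that the blow-up models in each case are the classical generic ones. Concretely, for flows with only spherical singularities and neckpinches, or with three-convex blow-up type, Proposition~\ref{proposition examples of main assumption} gives a relatively open cocountable set of times at which the time-slice of $\{u=0\}$ is a smooth (possibly disconnected) embedded hypersurface; the openness comes from short-time smoothness around a regular time (pseudolocality/Brakke regularity), and the cocountability from the stratification of singular times by generic singularity type. For the low-dimensional mean convex neighborhood case, one uses that in $\bR^{n+1}$ with $n+1\le 4$ the tangent flows at singularities of a flow with mean convex neighborhoods of singularities are spheres or (generalized) cylinders, so Proposition~\ref{proposition examples of main assumption} applies.

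With (i)–(iii) verified in each case, Theorem~\ref{theorem main assumption BFE before tau} yields the integral Brakke equality on $[0,T)$ immediately; the case $s=0$ or $t=T$ follows by taking limits using the dominated convergence theorem applied to the Huisken-monotonicity-controlled densities (this is where bounded area ratios enter and gives a mild but routine extension of the theorem's open-ended conclusion to the closed endpoints). The main obstacle is really the verification of the open cocountable set of regular times, which is conceptually the heart of the corollary: it rests on showing that outside of a countable set of singular times, the spacetime singular set has no time-slice at all, which is a nontrivial consequence of the Colding–Minicozzi stratification together with the specific structure of the blow-up models in each of the three cases. The remaining hypotheses are, by comparison, direct consequences of the cited works.
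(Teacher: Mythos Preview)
Your proposal is correct and follows essentially the same route as the paper: verify the hypotheses of Theorem~\ref{theorem main assumption BFE before tau} by using \cite{ChoiHaslhoferHershWhite19} and \cite{HershkovitsWhite17} for $T_{\disc}$, Proposition~\ref{proposition no Tmult for lsf with cylindrical singularities} for $T_{\mult}$, and Proposition~\ref{proposition examples of main assumption} (Colding--Minicozzi stratification) for the open cocountable set of regular times, reducing the low-dimensional mean convex neighborhood case to the three-convex case. One minor inaccuracy: in the paper's definition of three-convex blow-up type the subsequential blow-ups are required to be round shrinking cylinders $S^{n-k}\times\bR^k$ with $k\in\{0,1,2\}$, not arbitrary ancient three-convex flows; and the paper handles the endpoint $s=0$ simply via initial smoothness of $M_0$ rather than a dominated convergence argument.
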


The conclusion of Corollary \ref{corollary two convex three convex BFE} also holds until time $T = T_{\mult}$ for a unit density, unit regular Brakke flow with bounded area ratios and three-convex blow-up type. One interpretation of Corollary \ref{corollary two convex three convex BFE} is that Conjectures \ref{Conjecture B}, \ref{Conjecture C}, and \ref{Conjecture D} are true for generic flows with generic singularities in $\bR^{n+1}$, $n+1 \leq 4$.

Recently, Chodosh-Choi-Mantoulidis-Schulze proved that generic perturbations of a closed embedded hypersurface in $\bR^3$ have only spherical singularities and neckpinches so long as there are no higher multiplicity tangent flows and the no cylinder conjecture holds~\cite{ChodoshChoiMantoulidisSchulze20}. These flows satisfy the conclusion of Corollary \ref{corollary two convex three convex BFE}, using the formulation of the corollary for Brakke flows. Likewise, a low entropy assumption on the initial condition will ensure that only spherical singularities and neckpinches will occur for the flow, and such flows would also satisfy the conclusion of Corollary \ref{corollary two convex three convex BFE}.

We may apply Theorem \ref{theorem main result} and Theorem \ref{theorem main assumption BFE before tau} to improve the structure of limit flows in some important cases. Limit flows are a generalization of tangent flows, and unlike tangent flows, relatively little is known about them. For the definition of limit flows, see Section \ref{section singularities mean curvature flow}. The structure of limit flows, as opposed to just tangent flows, has been crucial for geometric and topological applications of mean curvature flow (e.g., see \cite{ChoiHaslhoferHersh18, ChoiHaslhoferHershWhite19}).

For an integral Brakke flow $\mu_t$ with bounded area ratios, we define 
$$T_{\mult}^*:= \inf\{t >0 \,|\,\exists  \text{ a planar limit flow at }(x,t)\text{ with multiplicity}\geq 2\}$$
%The assumption that $t<T_{\mult}^*$ is tantamount to the general multiplicity one conjecture holding until time $T_{\mult}^*$ (see Lemma \ref{lemma limit flow of limit flow is limit flow}).

\begin{cor}\label{corollary limit flows of BFEs}
Suppose that one of the following assumptions holds:
\begin{itemize}
%    \item $M_0$ is a generic\footnote{We mean \textquotedblleft generic" in the same sense as Corollary \ref{corollary mean convex neighborhoods of singularities}.} closed embedded hypersurface, $T = T_{\mult}^*$, $k=1$, and $\mu_t$ is the unit density Brakke flow associated to $M_0$ via elliptic regularization,
    \item $(M_0, \mu_t)$ satisfies the General Assumption for $t<T= \min(T_{\disc}, T^*_{\mult})$, and $k=1$,
    \item $\mu_t$ is an integral $n$-dimensional Brakke flow in $\bR^{n+k}$ which is unit regular, has bounded area ratios, and $T= T^*_{\mult}$.
\end{itemize}
Let $\mu'_t$ be a limit flow of $\mu_t$ at a spacetime point $(x_0,t_0)$ with $t_0\in (0,T)$. 

Then, for $\phi \in C^2_c(\bR^{n+k}, \bR_{\geq 0})$, $t \mapsto \mu'_t(\phi)$ is continuous for $-\infty < t < \infty$. 

Moreover, if $\mu'_t$ has a countable set of singular times, then for $\phi \in C^2_c(\bR^{n+k}, \bR_{\geq 0})$ and $-\infty < s,t < \infty$,
    $$\mu'_{t}(\phi) - \mu'_{s}(\phi) = \int_{s}^t \int -\phi |\vec{H}_{\mu'_{s'}}|^2 + \na \phi \cdot \vec{H}_{\mu'_{s'}} \,d\mu'_{s'}\,ds'$$
\end{cor}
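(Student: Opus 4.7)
My approach is to reduce each assertion to Theorem~\ref{theorem main result} and Theorem~\ref{theorem main assumption BFE before tau} applied directly to the limit flow $\mu'_t$, by showing that the assumption $t_0 < T_{\mult}^*$ forces the relevant multiplicity one condition on $\mu'_t$ at every spacetime point. First I would verify that $\mu'_t$ satisfies the required structural hypotheses: integrality and bounded area ratios pass to subsequential Brakke limits, while unit regularity is preserved under parabolic rescaling and Brakke convergence by White's regularity theorem. In the General Assumption case I would additionally deduce unit density of $\mu'_t$ a posteriori from White's stratification (Theorem~\ref{theorem white stratification}) combined with the multiplicity one propagation established next, which forces the generic planar tangent flow of $\mu'_t$ to have multiplicity one and hence $\mu'_t$ to have density one $\cH^n$-a.e.

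The key step is to show that every tangent flow of $\mu'_t$ at an arbitrary spacetime point $(y,s)$ is itself a limit flow of $\mu_t$ at $(x_0, t_0)$. If $\mu'_t$ is the Brakke limit of rescalings of $\mu_t$ by $\lambda_j \to \infty$ centered at $(x_j, t_j) \to (x_0, t_0)$, and $\mu''_t$ is a tangent flow of $\mu'_t$ at $(y,s)$ with rescaling factors $\alpha_k \to \infty$, then a standard diagonal extraction realizes $\mu''_t$ as the Brakke limit of rescalings of $\mu_t$ by $\alpha_k \lambda_{j(k)}$ centered at
\[
\bigl(x_{j(k)} + \lambda_{j(k)}^{-1} y,\; t_{j(k)} + \lambda_{j(k)}^{-2} s\bigr),
\]
and these centers converge to $(x_0, t_0)$. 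Because $t_0 < T_{\mult}^*$ precludes planar limit flows of $\mu_t$ at $(x_0, t_0)$ of multiplicity $\geq 2$, the same is true of every planar tangent flow of $\mu'_t$ at every $(y,s)$. Consequently $\cT_{\mult}(t) = \emptyset$ for every $t \in \bR$ with respect to $\mu'_t$, so condition~(\ref{condition 1}) of Theorem~\ref{theorem main result} holds trivially on any bounded time interval; applying that theorem after a time translation placing the interval in the positive range yields the desired continuity of $t \mapsto \mu'_t(\phi)$ on all of $\bR$.

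For the moreover statement, the same propagation gives that $\mu'_t$ has $T_{\mult} = +\infty$, and under the additional countable singular times hypothesis the second bullet of Theorem~\ref{theorem main assumption BFE before tau} applies to $\mu'_t$ on every compact subinterval of $\bR$, again after shifting time, yielding the integral Brakke equality between any two times $s,t$. The principal technical obstacle is the diagonal argument for tangent flows of limit flows: one must carefully compose the two rescaling sequences and pass to a further subsequence so that Brakke convergence is preserved and the multiplicity information at $(x_0,t_0)$ genuinely transfers to arbitrary spacetime points of $\mu'_t$. Once this propagation is established, both conclusions of the corollary follow by direct invocation of the earlier theorems.
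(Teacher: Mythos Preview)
Your approach is essentially the same as the paper's: establish that tangent flows of $\mu'_t$ are limit flows of $\mu_t$ at $(x_0,t_0)$ via a diagonal argument (the paper isolates this as Lemma~\ref{lemma limit flow of limit flow is limit flow}), transfer the no-higher-multiplicity-plane condition to $\mu'_t$, verify unit regularity and bounded area ratios pass to the limit, and then invoke the main theorems. Two small points where the paper is cleaner: first, the paper applies Theorem~\ref{theorem main result without unit regularity} rather than Theorem~\ref{theorem main result}, so that unit density of $\mu'_t$ never needs to be verified at all (the direction $(\ref{condition 1'})\Rightarrow(\ref{condition 3'})$ does not use it); your a~posteriori unit density argument via stratification is valid but is needed in \emph{both} bullet cases, not only the General Assumption case, since the second bullet does not assume $\mu_t$ is unit density. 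Second, in the General Assumption case you assert that unit regularity passes to the limit, but you must first know that $\mu_t$ itself is unit regular; this is not automatic for the boundary motion and is supplied by Proposition~\ref{proposition no quasistatic mult 1 planes} using $t_0<T_{\disc}$.
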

Limit flows of a unit regular Brakke flow are unit regular by~\cite[Section 7]{White05}. Also, restrictions on limit flows of a Brakke flow will pass to restrictions on tangent flows of a limit flow (see Lemma \ref{lemma limit flow of limit flow is limit flow}). Combining these ideas, Corollary \ref{corollary limit flows of BFEs} formalizes the idea that unit regularity and multiplicity assumptions on $\mu_t$ rule out all microscopic drops in mass detected by limit flows. By Lemma \ref{lemma limit flow of limit flow is limit flow}, the assumption that $t<T_{\mult}^*$ is tantamount to the general multiplicity one conjecture holding until time $T_{\mult}^*$.

White proved that no limit flow of a mean convex flow is a higher multiplicity plane, and limit flows are smooth until they vanish~\cite{Wh03, wh11} (cf.~\cite{HaslhoferKleiner171}). Each limit flow of a mean convex flow satisfies the assumptions of Corollary \ref{corollary limit flows of BFEs} with $T=\infty$. Corollary \ref{corollary limit flows of BFEs} can be seen as a generalization of the fact that limit flows of mean convex flows achieve equality in the integral version of the Brakke inequality.
\vspace{-.05in}

\subsection{Method of Proof}\label{section method of proof}\hfill\\
\vspace{-.2in}

The idea of Theorem \ref{theorem main result} is that a Brakke flow can only have mass drop when there is a drop in density on a large set or when some large region of the support vanishes. 

Intuitively, drops in density are detected by higher multiplicity planar tangent flows. The flow may converge with multiplicity at a singular time and then immediately drop to unit density or disappear, resulting in a drop in mass. The assumption on $\cT_{\mult}(t)$ is designed to rule out all higher multiplicity planar tangent flows. On the other hand, sudden vanishing of the support is detected by quasistatic planar tangent flows. The unit regularity assumption rules out quasistatic multiplicity one planar tangent flows. 

We prove that the flow $\mu_t$ has continuous mass if planar tangent flows which are quasistatic or higher multiplicity are ruled out $\cH^n$-a.e. Without these possible tangent flows, the singular set at each time-slice of the spacetime support has $\cH^n$ measure zero by White's stratification theorem (Theorem \ref{theorem white stratification}). With this regularity, we argue that the flow passes through each time without mass drop. There are static multiplicity one planar tangent flows at $\cH^n$-a.e.\ point of the spacetime support at each time, which means the flow will not drop in mass around most points. Bounds on area ratios imply that mass does not accumulate around the small singular set. Applying a partition of unity around the $\cH^n$ measure zero singular set, we find that there is no mass drop (Theorem \ref{theorem equivalence of mass continuity, codim 2}). 

We then prove the converse statement that continuity of mass implies the singular set is small, i.e. that $\cH^n$-a.e.\ point of the support at each time is fully smooth. This is based on an application of the Brakke regularity theorem, which gives a particularly strong regularity condition assuming mass continuity~\cite[Theorem 6.12]{Brakke1978}. Brakke's argument in \cite[Theorem 6.12]{Brakke1978} shows that at each time, the set of points which have microscopic drops in area ratio has $\cH^n$ measure zero (see Lemma \ref{lemma lahiri's lemma},~\cite[Lemma 9.5]{lahiri2014regularity}). Using Brakke's clearing out lemma, the set of points in the spacetime support of $\mu$ at time $t$ differs from $\spt \mu_t$ by a set of $\cH^n$ measure zero (Lemma \ref{lemma lahiri's support lemma})~\cite[Theorem 9.7]{lahiri2014regularity}. We find that a mass continuous flow is fully smooth at $\cH^n$-a.e.\ point of the spacetime support at each time. This is the desired partial regularity condition, and it is equivalent to the $\cH^n$-a.e. multiplicity one conjecture by White's stratification theorem. This concludes the proof of Theorem \ref{theorem main result}.

Corollary \ref{corollary general assumption no mass drop} is an application of Theorem \ref{theorem main result} to level set flows. The assumption that $t < T_{\mult}$ rules out all higher multiplicity planes as tangent flows. The flows constructed by elliptic regularization, as limits of translators, are unit regular, but this may not be the case for the unit density $\mu_t$. The flow $\mu_t$ merely \textquotedblleft sits underneath" some limit of translators (an \textquotedblleft enhanced motion" in the parlance of ~\cite{Ilmanen94}). Under the assumption that $t < T_{\disc}$, it follows that $\mu_t$ is unit regular, which rules out quasistatic multiplicity one planar tangent flows. Corollary \ref{corollary general assumption no mass drop} then follows from Theorem \ref{theorem main result}. The $t < T_{\disc}$ assumption also says that $\spt \mu$ will coincide with the level set flow up until time $T_{\disc}$. This ensures that the blow-ups of the level set flow coincide with the blow-ups of $\mu_t$. Moreover, generic flows are nonfattening and satisfy $T_{\disc} = T_{\fat} = \infty$. Corollary \ref{corollary mean convex neighborhoods of singularities} follows from these facts and from an application of a result of Hershkovits-White~\cite{HershkovitsWhite17}.

With an assumption on the countability of singular times, we upgrade Theorem \ref{theorem main result} and Corollary \ref{corollary general assumption no mass drop} to a proof of Theorem \ref{theorem main assumption BFE before tau}. The two different assumptions on the Brakke flow and the level set flow in Theorem \ref{theorem main assumption BFE before tau} are effectively the same. These assumptions will imply that all but countably many time-slices of the flow are fully smooth and coincide with a smooth flow. It follows that the flow achieves equality in the derivative version of the Brakke inequality, Conjecture \ref{Conjecture C}, for all but countably many times. We then use the Banach-Zaretskii theorem to prove that $t \mapsto \mu_t(\phi)$ is absolutely continuous. The fact that Conjecture \ref{Conjecture C} holds for all but countably many times is crucial to the application of the Banach-Zaretskii theorem. The absolute continuity of mass is used to show that the Brakke flow achieves equality in the integral Brakke inequality, concluding Theorem \ref{theorem main assumption BFE before tau}. Corollary \ref{corollary two convex three convex BFE} follows from an application of a result of Colding-Minicozzi~\cite{ColMin16, ColMin19} combined with work of Choi-Haslhofer-Hershkovits-White~\cite{ChoiHaslhoferHersh18, ChoiHaslhoferHershWhite19}. Corollary \ref{corollary limit flows of BFEs} is an application of Corollary \ref{corollary general assumption no mass drop} and Theorem \ref{theorem main assumption BFE before tau}, applied directly to limit flows. This requires the folklore result that restrictions on limit flows pass to restrictions on tangent flows of a limit flow.
\vspace{-.05in}

\subsection{Outline of the Paper}\hfill\\
\vspace{-.2in}

In Section \ref{section preliminaries}, we frontload with preliminaries and relevant notions of singularities and singular sets. We include a couple lemmas and theorems which will be used throughout this paper.

In Section \ref{section characterization of the main assumption}, we prove basic facts about flows satisfying the General Assumption and flows which have a countable set of singular times. We show that the $t < T_{\disc}$ assumption implies that there are no quasistatic multiplicity one planar tangent flows and the support of $\mu$ coincides with the level set flow. We relate blowups for level set flow to blowups of Brakke flow. We provide examples of flows with countable sets of singular times and show that Conjecture \ref{Conjecture C} holds for all but countably many times for such flows.

In Section \ref{section partial regularity and mass continuity}, we prove that there is no mass drop for flows with small singular sets (Theorem \ref{theorem equivalence of mass continuity, codim 2}), and conversely, we show that no mass drop implies that there is a small singular set (Theorem \ref{theorem mass continuity implies mult one}). Using stratification, we prove Theorem \ref{theorem main result}. Using statements from Section \ref{section characterization of the main assumption}, we prove Corollary \ref{corollary general assumption no mass drop} and Corollary \ref{corollary mean convex neighborhoods of singularities}. We then upgrade the mass continuity results with the assumption that there is a countable set of singular times, proving Theorem \ref{theorem main assumption BFE before tau} and then Corollary \ref{corollary two convex three convex BFE}. Corollary \ref{corollary limit flows of BFEs} follows as an application of Theorems \ref{theorem main result} and \ref{theorem main assumption BFE before tau} and a result on limit flows.

\subsection{Acknowledgments}\hfill\\
\vspace{-.2in}

The author would like to thank his advisor, Bruce Kleiner, for multiple helpful conversations and for useful methodological advice which helped initiate this paper. The author would also like to thank Felix Schulze, Robert Haslhofer, Alex Mramor, Or Hershkovits, and Tobias Colding for extensive helpful comments and discussions about this paper. This work was partially supported by NSF grant DMS-1711556.

\section{Preliminaries}\label{section preliminaries}

Let $k$ be a positive integer. The measure $\mu$ will denote a Radon measure on $\bR^{n+k}$, and we let $\mu(\phi) := \int \phi\, d\mu$
when this integral exists. We will assume $k=1$ when working with level set flows and $k\in \bZ_{>0}$ otherwise. The family $\{\mu_t\}_{t \geq 0}$ will be a one-parameter family of Radon measures on $\bR^{n+k}$. We will sometimes abbreviate this family by $\mu_t$ when the meaning is clear. We will sometimes consider time intervals other than $t \geq 0$, but we will make note of that when necessary.

We define $B_r(x)$ to be a ball around $x$ of radius $r$. We will assume that balls are in $\bR^{n+k}$. When necessary, we will write $B^{m}_r(x)$ to denote that the ball is in $\bR^{m}$.

For spacetime $\bR^{n+k} \times \bR_{\geq 0}$, let $\tau$ be the time function. So, $\tau: \bR^{n+k} \times \bR_{\geq 0} \to \bR_{\geq 0}$ is defined by $\tau(x,t) = t$.

\subsection{Brakke Flow}\label{section preliminaries Brakke flow}\hfill\\
\vspace{-.2in}

Introduced by Brakke in the 1970s~\cite{Brakke1978}, Brakke flow is a flow of varifolds. It is defined with a variational inequality in order to have a compactness theory in general. The inequality allows for the possibility of a discontinuous mass function over time, i.e.\ mass drop. This is unlike smooth mean curvature flow which has no mass drop and achieves equality in the variational inequality.

We will adhere to much of the notation and definitions in Ilmanen's monograph~\cite{Ilmanen94}. See \cite{simon2014introduction} for background on geometric measure theory. A Radon measure $\mu$ is $n$-rectifiable if $\mu(B) = \int_{B} \te\,d\cH^n$ for $B \subseteq \bR^{n+k}$, where $\te = \te(\mu, \cdot): \bR^{n+k} \to \bR_{\geq 0}$ is a locally $\cH^n$-integrable function and $\{\te >0\}$ is $\cH^n$-measurable and a countably rectifiable set. We say that $\mu$ is integer $n$-rectifiable if $\te$ takes values in the nonnegative integers. If $\mu$ is $n$-rectifiable, we let $V_{\mu}$ be the associated $n$-rectifiable varifold. We denote the first variation of $V_{\mu}$ by $\de V_{\mu}$ (see \cite{allard1972}), and let $|\de V_{\mu}|$ be the total variation of $\de V_{\mu}$. If $|\de V_{\mu}|$ is a Radon measure, then there exists $\nu: \bR^{n+k} \to \bR^{n+k}$ such that $\nu$ is $|\de V_{\mu}|$-measurable, $|\nu(x)|=1$ for $|\de V_{\mu}|$-a.e.\ $x$, and for $X \in C^1_c(\bR^{n+k}, \bR^{n+k})$,
$$\de V_{\mu}(X) = \int \langle \vec{H}_{\mu}, X \rangle \,d\mu -  \int \langle \nu, X\rangle \, d|\de V_{\mu}|_{\text{sing}}$$
where $\vec{H}_{\mu} := H_{\mu}\nu$ with $H_{\mu}:= \frac{d|\de V_{\mu}|}{d\mu}$ and $|\de V_{\mu}|_{\text{sing}} := |\de V_{\mu}|\lfloor \{\frac{d|\de V_{\mu}|}{d\mu}=\infty\}$. We call $\vec{H}_{\mu}$ the generalized mean curvature. Let $T_x \mu$ be the approximate tangent plane of a rectifiable Radon measure $\mu$. In this section, we denote by $(T_x \mu)^{\perp} \cdot \vec{H}_{\mu}(x)$ the projection of the vector $\vec{H}_{\mu}(x)$ onto $(T_x \mu)^{\perp}$ (see Remark \ref{remark Brakke orthogonality}).
\begin{defn}\label{definition Brakke functional}
Let $\mu$ be a Radon measure on $\bR^{n+k}$, and let $\phi \in C^2_c(\bR^{n+k}, \bR_{\geq 0})$. 

\noindent Under the following assumptions:
\begin{enumerate}
    \item $\mu \lfloor \{\phi >0\}$ is a $n$-rectifiable Radon measure,
    \item  $|\de V_{\mu}| \lfloor \{\phi >0\}$ is a Radon measure,
    \item  $|\de V_{\mu}| \lfloor \{\phi >0\}_{\text{sing}}=0$,
    \item 
    $\int \phi H_{\mu}^2 \,d\mu < \infty$,
\end{enumerate}
we define
\begin{equation*}\label{equation brakke variation}\cB(\mu, \phi) := \int -\phi(x) H_{\mu}^2(x) + \na \phi(x) \cdot (T_x \mu)^{\perp} \cdot \vec{H}_{\mu}(x) \,d\mu(x) \end{equation*}
If any of the above assumptions fails, define $\cB(\mu, \phi) := -\infty$.

%In this paper, we will call $\cB(\mu, \phi)$ the \textbf{Brakke variation}.
\end{defn}

Recall that for a function $f: \bR \to \bR$, the upper derivative is defined as
\begin{equation*}
    \ov{D}_{t_0}f := \limsup_{t \to t_0} \frac{f(t) - f(t_0)}{t-t_0}
\end{equation*}

\begin{defn}
The family $\{\mu_t\}_{t \geq 0}$ is a \textbf{Brakke flow} if for each $\phi \in C^2_c(\bR^{n+k}, \bR_{\geq 0})$ and each $t \geq 0$, 
\begin{equation*}
    \ov{D}_{t} \mu_t (\phi) \leq \cB(\mu_{t}, \phi)
\end{equation*}
Alternatively, $\mu_t$ is a Brakke flow if for each $\phi \in C^2_c(\bR^{n+k}, \bR_{\geq 0})$ and each $s,t \geq 0$,
\begin{equation*}
    \mu_t(\phi) - \mu_s(\phi) \leq \int_{s}^t \cB(\mu_{s'}, \phi)\,ds'
\end{equation*}
These two definitions are equivalent for integral Brakke flows (see Definition \ref{definition BF})~\cite{AL17}.
\end{defn}

We will say that a Brakke flow is \textquotedblleft $n$-dimensional" if it is a flow of integer $n$-rectifiable varifolds.

\begin{rmk}\label{remark Brakke orthogonality}
Brakke proved that for integer rectifiable varifolds satisfying (1) and (2) in the above definition, $\vec{H}$ is orthogonal to $T_x \mu$, $\mu$-a.e.\ ~\cite[Section 4]{Brakke1978}. If $\cB(\mu, \phi)>-\infty$ for an integer rectifiable $\mu$, then for $\mu$-a.e.\ x,
$$\na \phi(x) \cdot (T_x \mu)^{\perp} \cdot \vec{H}_{\mu}(x) = \na \phi(x) \cdot \vec{H}_{\mu}(x)$$ 
\end{rmk}
We will apply Remark \ref{remark Brakke orthogonality} implicitly, using the fact that for a Brakke flow, $\cB(\mu_t, \phi)>-\infty$ for almost every $t\geq 0$~\cite[7.2(iv)]{Ilmanen94}. 

\begin{defn}\label{definition BF}
Let $\{\mu_t\}_{t \geq 0}$ be an $n$-dimensional Brakke flow. 

\noindent The flow $\mu_t$ is \textbf{integral} if for a.e.\ $t \geq 0$, $\mu_t$ is an integer $n$-rectifiable varifold. 

\noindent The flow $\mu_t$ has \textbf{unit density} if for each $t \geq 0$, $\Te^n(\mu_{t}, x) = 1$ for $\mu_{t}$-a.e.\ $x$, where the limit $\Te^n(\mu_t, x) = \lim_{r \to 0} \frac{\mu_t(B_r(x))}{\omega_n r^n}$ is well-defined $\mu_t$-a.e.\ ~\cite{simon2014introduction}.

\noindent The flow $\mu_t$ has \textbf{locally bounded mass} if for each $K \subset \subset \bR^{n+k}$, $\sup_{t \geq 0}\, \mu_t(K) < \infty$.
\end{defn}

The next lemma will be used throughout this paper.

\begin{lem}[\cite{Brakke1978}~{\cite[7.2]{Ilmanen94}}]\label{lemma properties of BF}
Let $\{\mu_t\}_{t\geq 0}$ be a Brakke flow with locally bounded mass. Then, for each $\phi \in C^2_c(\bR^{n+k}, \bR_{\geq 0})$, the following properties hold: 
\begin{enumerate}
    \item There exists $C(\phi)$ such that the following quantity is nonincreasing in time:
    $$\mu_t(\phi) - C(\phi)t$$
    \item For each $t>0$,
    $$\lim_{s \nearrow t} \mu_s(\phi) \geq \mu_t(\phi) \geq \lim_{s\searrow t} \mu_s(\phi)$$
    and each limit exists. If $t=0$, $\mu_0(\phi) \geq \lim_{s \searrow 0}\mu_s(\phi)$ and this limit exists.
%    \item There is a cocountable set $J \subseteq [0, \infty)$ where $\mu_t$ is continuous at all $t \in J$. That is, for each $t \in J$ and all $\phi \in C^2_c(\bR^{n+1}, \bR_{\geq 0})$, the following limit exists and satisfies
%    $$\lim_{s \to t}\mu_s(\phi) =  \mu_t(\phi)$$
%    \item For a.e.\ $t$, $\cB(\mu_t, \phi)> -\infty$ and $\ov{D}_t \mu_t(\phi)> -\infty$ for all $\phi \in C^2_c(\bR^{n+1}, \bR_{\geq 0})$,
%    \item The function $t \mapsto \ov{D}_{t}\mu_t(\phi)$ is measurable.
\end{enumerate}
\end{lem}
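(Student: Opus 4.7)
The plan is to prove (1) first by uniformly bounding $\cB(\mu_t, \phi)$ in $t$, then extract (2) from the resulting monotonicity of $F(t) := \mu_t(\phi) - C(\phi)t$.

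For the uniform bound, I first recall the pointwise inequality $|\nabla \phi|^2 \leq 2 \|D^2 \phi\|_\infty\, \phi$, valid for any nonnegative $\phi \in C^2_c$ (each zero of $\phi$ is a minimum, so $\nabla \phi$ vanishes there, and Taylor expanding along the gradient line yields the bound). Since the projection $(T_x\mu_t)^\perp$ is contractive, so that $|(T_x\mu_t)^\perp \cdot \nabla \phi| \leq |\nabla \phi|$, Young's inequality gives
\begin{equation*}
\nabla \phi \cdot (T_x \mu_t)^{\perp}\cdot \vec{H}_{\mu_t} \leq \phi\, |\vec{H}_{\mu_t}|^2 + \frac{|\nabla \phi|^2}{4\phi}
\end{equation*}
on $\{\phi > 0\}$. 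Substituting into the definition of $\cB$ cancels the $-\phi|\vec{H}_{\mu_t}|^2$ term and produces
\begin{equation*}
\cB(\mu_t, \phi) \leq \int \frac{|\nabla \phi|^2}{4\phi}\,d\mu_t \leq \tfrac{1}{2}\|D^2 \phi\|_\infty\,\mu_t(\spt \phi)
\end{equation*}
whenever $\cB(\mu_t, \phi) > -\infty$ (the bound is vacuous otherwise). Since $\spt \phi$ is compact, local mass boundedness yields $K := \sup_{t\geq 0} \mu_t(\spt \phi) < \infty$, so $C(\phi) := \tfrac{1}{2}\|D^2 \phi\|_\infty\, K$ works.

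By the Brakke inequality, $\ov{D}_t F(t) \leq \cB(\mu_t, \phi) - C(\phi) \leq 0$ for every $t \geq 0$. I would then invoke the classical decrease lemma: any function with nonpositive two-sided upper derivative at every point of an interval is nonincreasing. The standard argument fixes $t_1 < t_2$ and $\epsilon > 0$, lets $\bar s := \sup\{s \in [t_1, t_2] : F(s) \leq F(t_1) + \epsilon(s - t_1)\}$, and uses the two-sided bound $\ov{D}_{\bar s} F \leq 0$ to show both (a) $\liminf_{s \nearrow \bar s} F(s) \geq F(\bar s)$, so $\bar s$ itself satisfies the defining inequality, and (b) $\bar s + h$ satisfies the inequality for sufficiently small $h > 0$; therefore $\bar s = t_2$, and sending $\epsilon \to 0$ gives $F(t_2) \leq F(t_1)$. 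This establishes (1).

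The monotonicity of $F$ immediately yields existence of the one-sided limits in (2), together with the displayed inequalities; adding the continuous function $C(\phi) t$ back converts these into the statement for $\mu_s(\phi)$, and the $t=0$ case involves only the right limit. The main obstacle, though classical, is the decrease lemma: since $t \mapsto \mu_t(\phi)$ has no a priori regularity, the pointwise upper-derivative bound at every $t$ (not just a.e.) is essential to run the supremum argument, and the one-sided lower semicontinuity required at $\bar s$ arises automatically from the two-sided form of $\ov{D}_t$.
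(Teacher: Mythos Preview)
The paper does not supply its own proof of this lemma; it is stated with attribution to Brakke and Ilmanen~\cite[7.2]{Ilmanen94}. Your argument is correct and matches the standard one found there: the estimate $|\nabla\phi|^2 \leq 2\|D^2\phi\|_\infty \phi$ together with Young's inequality absorbs the curvature term and yields the uniform bound $\cB(\mu_t,\phi) \leq C(\phi)$, after which the differential inequality $\ov{D}_t(\mu_t(\phi) - C(\phi)t) \leq 0$ and the elementary decrease lemma give (1), and (2) is immediate from monotonicity. Your sketch of the decrease lemma is also right, once one unpacks step (a): the bound $\liminf_{s\nearrow\bar s}F(s)\geq F(\bar s)$ combined with a sequence $s_n\nearrow\bar s$ lying in the defining set gives $F(\bar s)\leq\liminf_n F(s_n)\leq F(t_1)+\epsilon(\bar s-t_1)$, placing $\bar s$ in the set as claimed.
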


\subsection{Level Set Flow}\label{subsection preliminaries of level set flow}\hfill\\
\vspace{-.2in}

Level set flow is a flow of closed sets which coincides with smooth mean curvature flow when the sets are smooth hypersurfaces. It was first rigorously formulated by Evans-Spruck~\cite{EvansSpruck91} and Chen-Giga-Goto~\cite{ChenGigaGoto91}. Level set flow is given by a unique time-varying function whose level sets satisfy the mean curvature flow equation in a weak sense. 

Let $\Ga \subset \subset \bR^{n+1}$, and let $g: \bR^{n+1} \to \bR$ be a continuous function such that $\Ga = \{g = 0\}$ and all but at most one level set of $g$ is compact. Then, there exists a unique weak solution $u: \bR^{n+1} \times \bR_{\geq 0} \to \bR$ solving
\[
\begin{cases}
    \,\,u_t = \sum_{i, j=1}^{n+1} \big(\de_{ij} - \frac{u_{x_i} u_{x_j}}{|Du|^2}\big)u_{x_i x_j}& \text{on } \bR^{n+1}\times (0, \infty)\\
    \,\,\, \qquad \qquad u = g,              & \text{on } \bR^{n+1}\times \{0\}
\end{cases}
\]
The level set flow of $\Ga$ is denoted by $F_t(\Ga) := \{u(\cdot, t)=0\}$ and this is independent of the choice of $g$.

Let $M_0$ be a smooth closed embedded connected hypersurface and let $D_0$ be a bounded connected open set with $\dd D_0 = M_0$. Let $D'_0$ be the complement of $D_0$. There exist three possible flows of sets we can associate to $M_0$: the level set flow, the outer flow, and the inner flow. 

The level set flow of $M_0$ is $F_t(M_0)$. The outer and inner flows are constructed from the level set flows of $\ov{D_0}$ and $D'_0$, respectively. Define
$$\cU:= \{(x,t) \in \bR^{n+1} \times [0, \infty)\,|\,x \in F_t(\ov{D_0})\}$$
$$\cU':= \{(x,t) \in \bR^{n+1} \times [0, \infty)\,|\,x \in F_t(D'_0)\}$$
The \textbf{outer flow} of $M_0$ is defined by 
$$t \mapsto F_t^{\text{out}}(M_0) := \{x \in \bR^{n+1}\,|\, (x,t) \in \dd \cU\}$$
The \textbf{inner flow} of $M_0$ is defined by
$$t \mapsto F_t^{\text{in}}(M_0) := \{x \in \bR^{n+1} \,|\, x \in \dd \cU'\}$$
Here, $\dd \cU$ and $\dd \cU'$ denote the relative boundaries of $\cU$ and $\cU'$ in spacetime $\bR^{n+1} \times \bR_{\geq 0}$.

One important feature of level set flow is that it may fatten. This means that the level sets may attain positive $\cH^{n+1}$-measure, even if the initial condition is smooth~\cite{angenentilmanenchopp95, white02}. If $F_t(M_0)$ fattens, then $F_t^{\text{out}}(M_0)$ and $F_t^{\text{in}}(M_0)$ are distinct. Fattening implies non-uniqueness of level set flow, since it forces the inner and outer flows to be distinct. It is unknown in general if the inner and outer flows can be distinct before the flow fattens. This is the content of Conjecture \ref{conjecture E}. Following the terminology of \cite{HershkovitsWhite17},
$$T_{\disc} := \inf\big\{t>0\,|\,F_t(M_0), F_t^{\text{out}}(M_0), \text{and } F_t^{\text{in}}(M_0) \text{ are not all equal}\big\}$$
$$T_{\fat} := \inf\big\{t>0\,|\,F_t(M_0) \text{ has non-empty interior}\big\}$$
Conjecture \ref{conjecture E} says that $T_{\disc} = T_{\fat}$ for any level set flow. This conjecture implies that if the level set flow, outer flow, and inner flow are distinct at some time, then the level set flow is fattening at that time.

We say that a spacetime point $(x,t)$ with $x \in F_t^{\text{out}}(M_0)$ is \textbf{backwardly regular} if there exists a ball $B_{\eps}(x)$ such that $s \mapsto F_s^{\text{out}}(M_0) \cap B_{\eps}(x)$ is a smooth embedded mean curvature flow for $s \in [t-\eps^2, t]$. If a point $(x,t)$ is not backwardly regular, then it is called \textbf{backwardly singular}. 

For a spacetime point $X = (x_0,t_0)$, we define $\cU_{X,\la}$ by
$$\cU_{X,\la}:= \{(x,t) \,|\,x = \la(y - x_0), t = \la^2(s-t_0) \text{ for some }(y,s) \in \cU\}$$

The level set flow of $M_0$ has only \textbf{spherical singularities} and \textbf{neckpinches} if for each backwardly singular spacetime point $X$, $\tau(X)>0$, $\cU_{X,\la}$ converges smoothly with multiplicity one as $\la \to \infty$ to a shrinking $B^{n+1}$ or $B^n \times \bR$. A shrinking $B^{n+1-k} \times \bR^k$ is the flow $\sqrt{-2(n-k)t}\,B^{n+1-k} \times \bR^k$. Smooth convergence with multiplicity one  means that in each compact set, $\cU_{X, \la}$ converges as sets in the Hausdorff sense and $\dd \cU_{X,\la}$ converges smoothly as a graph over the boundary of the limit. We take a full limit in this definition in order to cohere with the work of Choi-Haslhofer-Hershkovits-White~\cite{ChoiHaslhoferHershWhite19}. We use full limits for simplicity, although it is possible to replace the full limit assumption with a subsequence, using the fact that the outer flow coincides with a unit regular Brakke flow~\cite[Theorem B6]{HershkovitsWhite17} and applying Colding-Minicozzi-Ilmanen and Colding-Minicozzi's uniqueness results~\cite{ColdingIlmanenMinicozzi16, ColMin15}.

The level set flow of $M_0$ has \textbf{three-convex blow-up type} if for each backwardly singular point $X$, $\tau(X)>0$, there exists $\la_i \to \infty$ such that $\cU_{X,\la_i}$ converges smoothly with multiplicity one to a round shrinking cylinder $B^{n+1-k} \times \bR^k$ for $k \in \{0, 1, 2\}$. In this definition, we allow for a subsequence $\la_i \to \infty$. 

The definition of cylindrical, or generic, singularities for level set flow is the same as that of three-convex blow-up type, though any multiplicity one cylindrical limit is admissible.

Finally, we say that the level set flow of $M_0$ has \textbf{mean convex neighborhoods} of singularities for $t<T$ if for each backwardly singular point of $F_t^{\text{out}}(M_0)$ and each $t \in (0,T)$, there exists $\eps>0$ such that either 
$$F_{t_2}(\ov{D_0})\cap B_{\eps}(x)\subseteq F_{t_1}(\ov{D_0})\setminus F_{t_1}^{\text{out}}(M_0)$$
for all $t-\eps \leq t_1 \leq t_2 \leq t$, or
$$F_{t_1}(\ov{D_0})\cap B_{\eps}(x)\subseteq F_{t_2}(\ov{D_0})\setminus F_{t_2}^{\text{out}}(M_0)$$
for all $t-\eps \leq t_1 \leq t_2 \leq t$. These are the flows with only backwardly singular points of mean convex and mean concave type in ~\cite{HershkovitsWhite17}.

\subsection{Singularities and Convergence}\label{section singularities mean curvature flow}\hfill\\
\vspace{-.2in}

In this paper, we say that a sequence $\mu^i$ converges as Radon measures to $\mu$ if for each $\phi \in C_c(\bR^{n+k}, \bR_{\geq 0})$, $\lim_{i \to \infty} \mu^i(\phi) = \mu(\phi)$. This is the weak$^*$ convergence of Radon measures. Likewise, a sequence $V_{\mu^i}$ converges as varifolds to $V_{\mu}$ if the sequence converges as Radon measures on $G_n(\bR^{n+k})$.

We say that a sequence of Brakke flows $\{\mu^i_t\}_{i \geq 1}$ \textbf{converges} to a Brakke flow $\mu_t$ if for each $t$, $\mu^i_t \to \mu_t$ as Radon measures, and for a.e.\ $t$, there is a subsequence $j$ depending on $t$, such that $V_{\mu^j_t} \to V_{\mu_t}$ as varifolds. We will oftentimes abbreviate a sequence of flows $\{\mu^i_t\}_{i \geq 1}$ by just $\mu^i_t$. The Brakke compactness theorem says that if each $\mu^i_t$ has uniformly locally bounded mass over the sequence, then there exists a subsequence $j$ such that $\mu^j_t$ converges to an integral Brakke flow $\mu_t$~\cite[Chp. 4]{Brakke1978}~\cite[7.1]{Ilmanen94}.

\begin{defn}\label{definition bounded area ratios}
An $n$-dimensional Brakke flow $\mu_t$ has \textbf{bounded area ratios} if 
$$\sup_{t \geq 0} \sup_{R >0} \sup_{x \in \bR^{n+k}} \frac{\mu_t(B_r(x))}{\omega_n R^n} = \La < \infty$$
\end{defn}
\noindent For an integral $n$-dimensional Brakke flow $\mu_t$, we define
$$\Te(x_0, t_0) := \lim_{t \nearrow t_0} \frac{1}{(4\pi (t_0-t))^{\frac{n}{2}}}\int \exp\Big(\frac{-|x-x_0|^2}{4(t_0-t)}\Big)\,d\mu_t(x)$$
for each spacetime point $(x_0, t_0)$. $\Te(x_0, t_0)$ is the Gaussian density of $\mu_t$ at $(x_0, t_0)$. Huisken's monotonicity formula for Gaussian density ratios of Brakke flows implies the next theorem, Theorem \ref{theorem huisken monotonicity}~\cite{Huisken90} (cf.~\cite[Lemma 7]{Ilmanen95}).
\begin{thm}[{\cite[Lem.\ 7]{Ilmanen95}}~{\cite[Cor.\ 3.2]{BernsteinWang16}}]\label{theorem huisken monotonicity}
Let $\{\mu_t\}_{t \geq 0}$ be an integral $n$-dimensional Brakke flow. Suppose that
$$\sup_{R >0} \sup_{x \in \bR^{n+k}} \frac{\mu_0(B_r(x))}{\omega_n R^n}< \infty$$
Then, $\mu_t$ has bounded area ratios.

%Then, for any $(x_0,t_0) \in \bR^{n+1} \times (0, \infty)$, $\Te_{(x_0,t_0)}(\mu_t)$ is nonincreasing for $t \in [0, t_0)$.

\end{thm}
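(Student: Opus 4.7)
The plan is to deduce the uniform-in-$t$ area ratio bound from Huisken's monotonicity formula by comparing the area ratio at time $t > 0$ to a Gaussian integral against $\mu_0$ evaluated at a suitably chosen spacetime basepoint. Fix $x_0 \in \bR^{n+k}$, $t > 0$, and $R > 0$; set $t_0 := t + R^2$ and consider
$$\Phi(s) := \frac{1}{(4\pi(t_0-s))^{n/2}}\int \exp\!\Big(\frac{-|x-x_0|^2}{4(t_0-s)}\Big)\,d\mu_s(x), \qquad 0 \leq s < t_0.$$
On $B_R(x_0)$ one has $\exp(-|x-x_0|^2/(4R^2)) \geq e^{-1/4}$, so
$$\mu_t(B_R(x_0)) \leq e^{1/4}(4\pi R^2)^{n/2}\,\Phi(t) = e^{1/4}(4\pi)^{n/2} R^n\, \Phi(t).$$

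Next, by Huisken's monotonicity formula for integral Brakke flows, $s \mapsto \Phi(s)$ is nonincreasing on $[0,t_0)$, so $\Phi(t) \leq \Phi(0)$, and it suffices to bound $\Phi(0)$ uniformly in $x_0$ and $t_0$. For this, decompose $\bR^{n+k}$ into the ball $B_{\sqrt{t_0}}(x_0)$ and the dyadic annuli $A_k := B_{2^{k+1}\sqrt{t_0}}(x_0) \setminus B_{2^k\sqrt{t_0}}(x_0)$, $k \geq 0$. Letting $\Lambda_0 := \sup_{R>0,\,x} \mu_0(B_R(x))/(\omega_n R^n) < \infty$ denote the initial area ratio bound, on $A_k$ the Gaussian factor is bounded by $e^{-4^{k-1}}$, while $\mu_0(A_k) \leq \Lambda_0 \omega_n 2^{n(k+1)} t_0^{n/2}$, and the innermost ball contributes at most $\Lambda_0 \omega_n t_0^{n/2}$. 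Summing,
$$\Phi(0) \leq \frac{\Lambda_0\,\omega_n}{(4\pi)^{n/2}}\Big(1 + \sum_{k=0}^{\infty} 2^{n(k+1)}\,e^{-4^{k-1}}\Big) =: C(n)\,\Lambda_0,$$
a finite constant depending only on $n$ and $\Lambda_0$ because the series converges by the superexponential decay of $e^{-4^{k-1}}$.

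Combining these estimates yields $\mu_t(B_R(x_0))/(\omega_n R^n) \leq C'(n)\,\Lambda_0$ uniformly in $t,\, R,\, x_0$, which is the desired bounded area ratio conclusion; the $t=0$ case is the hypothesis itself. The applicability of Huisken's monotonicity at time $0$ requires the Gaussian integral against $\mu_0$ to be finite, which is guaranteed by the initial area ratio bound, and locally bounded mass at later times is automatic from Lemma \ref{lemma properties of BF}(1) together with that same bound. Consequently there is no substantive obstacle: the argument is a routine Gaussian-decomposition calculation powered by Huisken's monotonicity formula, with the only real choice being $t_0 - t = R^2$ to make the Gaussian ratio comparable to the spatial area ratio on the ball of interest.
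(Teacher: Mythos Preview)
Your argument is correct and is precisely the standard proof via Huisken monotonicity and a dyadic Gaussian decomposition. The paper does not supply its own proof of this statement but simply cites \cite[Lem.~7]{Ilmanen95} and \cite[Cor.~3.2]{BernsteinWang16}, where essentially this same computation appears.
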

 The assumptions of Theorem \ref{theorem huisken monotonicity} are satisfied if $\mu_0 = \cH^n \lfloor M_0$ with $M_0$ a smooth closed hypersurface. Every Brakke flow in this paper will have bounded area ratios.

For a spacetime point $X = (x_0, t_0)$ with $t_0>0$ and an integral Brakke flow $\{\mu_t\}_{t \geq 0}$ with bounded area ratios, define the rescaled Brakke flow 
$$\mu_t^{\la, X}(E):=\la^n \mu_{\la^{-2}t + t_0}(\la^{-1}E+x_0)$$ 
for each $\la>0$. For a sequence $\la_i \to \infty$, there exists a subsequence such that $\mu_t^{\la_i, X}$ converges to a Brakke flow $\mu_t'$ defined for $t \in (-\infty, \infty)$. A \textbf{tangent flow} at a point $X$, with $\tau(X)>0$ is one such $\mu'_t$. There may be $T< \infty$ such that $\mu'_t \equiv 0$ for $t >T$. A \textbf{limit flow} at $X$, $\tau(X)>0$, is defined to be a limit of $\mu_t^{\la_i, X_i}$ for $\la_i \to \infty$ and $X_i \to X$.

A tangent flow is a self-similar Brakke flow for $t<0$~\cite{Ilmanen93}. A well-known folklore result says that tangent flows with smooth support have constant multiplicity on each connected component. This follows from the constancy theorem for integral varifolds with smooth support and bounded generalized mean curvature~\cite[Lemma A.1]{BellettiniWickramasekera18}. A \textbf{static} multiplicity $m$ planar tangent flow is a tangent flow given by $m\cH^{n}\lfloor P$ for some fixed hyperplane $P$ passing through the origin for $t \in (-\infty, \infty)$. A \textbf{quasistatic} multiplicity $m$ planar tangent flow is a non-static tangent flow given by $m\cH^{n}\lfloor P$ for some fixed hyperplane $P$ passing through the origin for $t \in (-\infty, 0)$. The Gaussian density $\Te$ is constant for planar tangent flows for $t<0$, and $\Te \equiv m$ for a multiplicity $m$ planar tangent flow. A quasistatic multiplicity $m$ planar tangent flow may drop in density or disappear at $t=0$. In general, a quasistatic limit flow is any Brakke flow which is static, i.e. a fixed point of the flow, until $t=0$ but not after.

We say that a Brakke flow has \textbf{generic} or cylindrical singularities if every tangent flow with $\Te>1$ is a multiplicity one shrinking cylinder $\sqrt{-2(n-k)t}\,S^k \times \bR^{n}$, for $k \in \{1, \dots, n-1\}$. These are the \textquotedblleft generic" singularities, since the cylinders are entropy generic~\cite{ColdingMinicozziGeneric}.

We define the \textbf{spacetime support} of the flow $\mu_t$ by
$$\spt \mu := \ov{\bigcup_{t \geq 0} \spt \mu_t \times \{t\}}$$
where the closure is taken in spacetime. We define the time slices of the spacetime support by
$$(\spt \mu)_{t_0}:= \spt \mu \cap \{t = t_0\}$$
Note that $\spt \mu_t \times \{t\}$ may in general be different than $(\spt \mu)_t$. A spacetime point $(x,t)$, with $t>0$, belongs to $\spt \mu$ if and only if $\Te(x,t) \geq 1$. The Gaussian density is upper semi-continuous along converging Brakke flows. One consequence is that if $\mu^i_t \to \mu_t$ as Brakke flows, then $\spt \mu^i \to \spt \mu$ as sets.

A \textbf{fully smooth point} is a spacetime point $(x_0,t_0)$ such that for some $r,\epsilon>0$ and each $s \in (t_0 - \eps^2, t_0 + \eps^2)$, 
\begin{equation}\label{equation fully smooth}\mu_s \lfloor B_r(x_0) = \cH^{n}\lfloor M_s\end{equation}
for a smooth proper mean curvature flow of embedded connected hypersurfaces $M_s$ in $B_{r}(x_0)$ with bounded curvature for $s \in (t_0 - \eps^2, t_0 + \eps^2)$. Each $M_s$ is assumed nonempty.

We stress that in the definition of fully smooth, the Brakke flow is unit density in $B_r(x_0)$. Brakke flows may disappear suddenly, but smooth mean curvature flows do not. If $(x_0, t_0)$ is a fully smooth point, then $\mu_t$ does not disappear locally around $(x_0,t_0)$. The flow $\mu_t$ is \textbf{unit regular} if each $(x,t)$ with $\Te(x,t)=1$ is a fully smooth point. If $\Te(x,t) = 1$, the only possible tangent flows at $(x,t)$ in general are a static or quasistatic multiplicity one plane. A unit regular Brakke flow has no quasistatic multiplicity one planar tangent flows.

\begin{rmk}\label{remark mult 1 static plane is fully smooth}
    A point $(x,t)$ is fully smooth if and only if there is a static multiplicity one planar tangent flow at $(x,t)$. This follows from the Brakke regularity theorem~\cite[Theorem 6.11]{Brakke1978}. See \cite[Proposition 3.7]{BernsteinWang16}, for a relevant formulation of the regularity theorem, as well as \cite[Theorem 3.6]{Ton14} for a proof of Brakke regularity. In \cite[Theorem 3.6]{Ton14}, it is proven that the motion law is satisfied where the spacetime support is smooth, which is required for our definition of fully smooth.\footnote{It is required for our definition but not actually necessary for our proofs, except out of convenience. We stipulate that a fully smooth point $x$ has a smooth mean curvature flow around $x$, but it may as well be any smooth flow of embeddings. In light of the generality of Kasai and Tonegawa's regularity theory, this means the arguments in this paper may be generalizable to other geometric flows.~\cite{kasaitonegawa14, Ton14}} See also Lemma \ref{lemma restarting}, in which we show that where the support is a smooth flow, the Brakke flow is the standard surface measure associated to the smooth flow, up to multiplicity. 
\end{rmk}

For an integral Brakke flow $\mu_t$ with bounded area ratios, define
$$\reg^+\hspace{-.04in}\mu := (\spt \mu_0 \times \{0\}) \cup \{(x,t) \in \spt \mu \,|\, (x,t) \text{ is a fully smooth point}\}$$
$$\sing^+\hspace{-.04in}\mu := \spt \mu \setminus \reg^+\hspace{-.04in} \mu$$
We define the time-slices by
$$(\sing^+\hspace{-.04in}\mu)_{t_0} := \sing^+\hspace{-.04in}\mu \cap \{t = t_0\}$$
Note that $\spt \mu$ may be smooth around $(x,t)$, yet $(x,t)$ may still be in $\sing^+\hspace{-.04in}\mu$. This could happen if $\mu_t$ has a higher multiplicity planar tangent flow. For this reason, we notate this definition of the singular set with a plus sign, so as not to confuse it with other weaker interpretations of the singular set.

\begin{rmk}
In our notation, $(\sing^+\hspace{-.04in}\mu)_{t}$ will denote all the points in the $t$-slice of $\spt \mu$ which do not have a static multiplicity one plane as a tangent flow. 
\end{rmk}

\subsection{Elliptic Regularization}\hfill\\
\vspace{-.2in}

The precise relationship between Brakke flow and level set flow is still not fully understood. A significant advance was Ilmanen's elliptic regularization, which associates a canonical unit density Brakke flow to each nonfattening level set flow~\cite{Ilmanen94}.

\begin{thm}[{\cite[Thm.\ 11.1, 11.4]{Ilmanen94}}]\label{theorem unit density for nonfattening}
Let $E_0 \subset \bR^{n+1}$ be a bounded open set with finite perimeter.
%so that $\dd [E] \in \bf{I}_{n-1}^{loc}(\bR^{n+1})$ is a cycle with finite mass. 
Suppose that the level set flow with initial condition $\ov{\dd^*E_0}$ is nonfattening.

Then, there exists an integral Brakke flow $\{\ov{\mu}_t\}_{t \geq 0}$ and a bounded relatively open finite perimeter set $E\subset \bR^{n+1} \times \bR_{\geq 0}$ such that
\begin{enumerate}
    \item If $u$ is the level set flow with initial condition $E_0 = \{u(\cdot, 0) >0\}$ and $\ov{\dd^*E_0} = \{u(\cdot, 0)=0\}$, then $E = \{u>0\}$,
    \item For each $t\geq 0$,
    $\{u(\cdot, t)>0\}$ is a finite perimeter set,
    \item $\mu_t := \cH^{n} \lfloor \dd^* \{u(\cdot, t)>0\}$ is a unit density integral Brakke flow,
    \item  $\ov{\mu}_t$ is unit regular, $\ov{\mu}_0 = \mu_{0}$, and $\ov{\mu}_t \geq \mu_t$ for each $t \geq 0$.

\end{enumerate}
\end{thm}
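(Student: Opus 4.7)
The plan is to adapt Ilmanen's elliptic regularization scheme. Working in $\bR^{n+2} = \bR^{n+1}_x \times \bR_z$, for each $\varepsilon > 0$ I would minimize the weighted area functional
$$I_\varepsilon(E) := \int_{\dd^* E} e^{-z/\varepsilon}\, d\cH^{n+1}$$
over Caccioppoli sets $E \subset \bR^{n+1} \times \bR_{\geq 0}$ subject to the trace condition $E \cap \{z = 0\} = E_0$. Standard BV-compactness and lower semicontinuity yield a minimizer $E_\varepsilon$, whose reduced boundary $N_\varepsilon := \dd^* E_\varepsilon$ is (off a closed singular set of Hausdorff codimension at least $7$) a smooth translator solving $\vec{H}_{N_\varepsilon} = -\tfrac{1}{\varepsilon}(e_z)^\perp$. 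Translating $N_\varepsilon$ downward in $z$ at speed $1/\varepsilon$ then produces a smooth generalized mean curvature flow $\{M^\varepsilon_t\}_{t \geq 0}$ in $\bR^{n+2}$.

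I would next extract a level set approximation by choosing $u_\varepsilon(x,t)$ so that $\{u_\varepsilon(\cdot,t) = c\}$ corresponds to the height-$(\varepsilon t - c)$ slice of $N_\varepsilon$ above the point $x \in \bR^{n+1}$. Comparison against shrinking sphere barriers yields uniform local Lipschitz estimates, so Arzel\`a--Ascoli extracts a locally uniform subsequential limit $u$. Passing to the viscosity limit in the translator equation, $u$ solves the level set flow equation with the prescribed initial data, establishing conclusion (1). Conclusion (2) then follows from uniform BV estimates on $\{u_\varepsilon(\cdot, t) > 0\}$ combined with a Fubini-type slicing argument on the spacetime Caccioppoli set $\{u > 0\}$, where the nonfattening hypothesis $\cH^{n+1}(\{u(\cdot, t) = 0\}) = 0$ is used to ensure the slices are unambiguous.

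For conclusions (3) and (4), the surface measures $\mu^\varepsilon_t := \cH^n \lfloor (M^\varepsilon_t \cap \{z = 0\})$ form integer unit density Brakke flows with uniformly bounded area ratios, via Huisken's monotonicity in $\bR^{n+2}$ applied to the translators. Brakke's compactness theorem extracts a subsequential limit integral Brakke flow $\bar\mu_t$, which is unit regular because each approximator is smooth, so that static multiplicity-one planes are the only unit density tangent flows available in the limit. The unit density measure $\mu_t := \cH^n \lfloor \dd^* \{u(\cdot, t) > 0\}$ is itself a Brakke flow via a slicing and lower semicontinuity argument that controls $\int \phi |\vec{H}|^2\, d\mu_t$ by the $\varepsilon \to 0$ defect of the weighted area. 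The comparison $\bar\mu_t \geq \mu_t$ is immediate, since $\dd^* \{u(\cdot, t) > 0\} \subseteq \spt \bar\mu_t$ with multiplicity at most that carried by $\bar\mu_t$.

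The hardest step is reconciling the two limiting objects: $\bar\mu_t$ arises from weak-$*$ varifold convergence and may carry multiplicity, while $\mu_t$ is the slicewise reduced boundary of a spacetime finite perimeter set. Verifying that $\mu_t$ itself obeys Brakke's variational inequality requires careful slicing at almost every time together with Ilmanen's $L^2$-bound on $\vec{H}$ derived from the weighted-area decrement as $\varepsilon \to 0$; the nonfattening hypothesis is essential here, preventing slice boundaries from disappearing into the positive-measure set $\{u = 0\}$ and guaranteeing compatibility between the two limits.
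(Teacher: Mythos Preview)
The paper does not actually prove this theorem; it is quoted from Ilmanen's monograph \cite{Ilmanen94} (with unit regularity coming from later work of White). So there is no ``paper's own proof'' to compare against, and your task was really to sketch Ilmanen's construction. Your outline captures the broad architecture, but there is a genuine gap in the order of operations for conclusion (4).

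You write that $\mu^\varepsilon_t := \cH^n \lfloor (M^\varepsilon_t \cap \{z = 0\})$ are Brakke flows in $\bR^{n+1}$ for each fixed $\varepsilon$, and then pass to the limit. This is not correct: the translating graph $M^\varepsilon_t$ is a mean curvature flow in $\bR^{n+2}$, but its slice at a fixed height is \emph{not} a mean curvature flow in $\bR^{n+1}$ (the mean curvature of the slice differs from that of the ambient hypersurface by a term involving the tilt). Ilmanen's actual route is the reverse: pass to the Brakke limit \emph{in $\bR^{n+2}$} first, obtaining an integral Brakke flow $\hat\mu_t$ there; then prove that $\hat\mu_t$ is invariant under translation in $z$ (this is the key step, using Huisken monotonicity to show the Gaussian area at every height equals the Gaussian area at $z = +\infty$); conclude $\hat\mu_t = \ov{\mu}_t \times \bR$ for some $\ov{\mu}_t$ in $\bR^{n+1}$; and finally check that a $z$-invariant Brakke flow in $\bR^{n+2}$ slices to a Brakke flow in $\bR^{n+1}$. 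Without the translation-invariance step you cannot extract $\ov\mu_t$ as a Brakke flow at all.

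A second, smaller point: unit regularity of $\ov\mu_t$ is not automatic from ``approximators are smooth.'' It requires White's local regularity theorem applied to the sequence of smooth flows in $\bR^{n+2}$, which upgrades Gaussian density close to $1$ to smooth convergence with curvature bounds; this is what rules out quasistatic unit-density planar tangent flows in the limit. Your one-line justification elides this. Finally, the role of nonfattening is primarily to force \emph{uniqueness} of the matching motion (so that the spacetime Caccioppoli set is identified with $\{u>0\}$ for the canonical level set solution), rather than merely to keep slice boundaries from being absorbed; you gesture at this but the emphasis is slightly off.
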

In the terminology of \cite{Ilmanen94}, which we will use occasionally, $\ov{\mu}_t$ is referred to as an \textquotedblleft enhanced motion," and $\mu_t$ is referred to as a \textquotedblleft boundary motion."

\begin{rmk}\label{remark localization of elliptic regularization}
If the level set flow of Theorem \ref{theorem unit density for nonfattening} is nonfattening until time $T$, then the results of this theorem hold for $t<T$. This follows from the fact that there is uniqueness of matching motions until time $T$ (see the proof of \cite[11.4]{Ilmanen94}). 
\end{rmk}

\section{Nonfattening and Countable Sets of Singular Times}\label{section characterization of the main assumption}
In this section, we collect multiple statements characterizing the General Assumption and flows with countable sets of singular times. 

The first statement is a well-known locality theorem for integer rectifiable varifolds due to Sch\"atzle \cite[Corollary 4.2]{Sch09}. See also \cite{LM09} for a further discussion.

\begin{thm}[{\cite[Cor.\ 4.2]{Sch09}}~\cite{LM09}]\label{theorem Schatzle locality}
Let $V_{\mu_1}, V_{\mu_2}$ be integer rectifiable $n$-varifolds defined in the open set $U\subset \bR^{n+k}$ satisfying
\begin{enumerate}
    \item $\vec{H}_{\mu_i} \in L^2_{\loc}(\mu_i)$ for $i=1,2$,
    \item $\spt \mu_1 \cap \spt \mu_2$ is $C^2$ rectifiable,
\end{enumerate}
Then, for $\cH^n$-a.e.\ $x \in \spt \mu_1 \cap \spt \mu_2$, 
$$\vec{H}_{\mu_1}(x) = \vec{H}_{\mu_2}(x)$$
\end{thm}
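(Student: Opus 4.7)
The plan is to reduce the locality equality to a pointwise comparison with the classical mean curvature of a local $C^2$ submanifold carrying the common support. By the $C^2$-rectifiability hypothesis, cover $\spt \mu_1 \cap \spt \mu_2$ up to an $\cH^n$-null set by a countable family of embedded $C^2$ $n$-submanifolds $N_j$ of $U$. It suffices to show that for each $j$ and each $i \in \{1,2\}$,
$$\vec{H}_{\mu_i}(x) = \vec{H}_{N_j}(x) \qquad \text{for } \cH^n\text{-a.e.\ } x \in N_j \cap \spt \mu_i,$$
where $\vec{H}_{N_j}$ denotes the ordinary mean curvature of the $C^2$ submanifold $N_j$. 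Taking the union over $j$ and intersecting over $i \in \{1,2\}$ then yields the theorem.

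Fix $j$ and $i$, and consider $x_0 \in N_j \cap \spt \mu_i$. At $\cH^n$-a.e.\ such $x_0$, the intersection $N_j \cap \spt \mu_i$ is rectifiable with positive $n$-density, so the approximate tangent plane $T_{x_0}\mu_i$ of the integer rectifiable varifold $V_{\mu_i}$ must coincide with $T_{x_0} N_j$. Since $\vec{H}_{\mu_i} \in L^2_{\loc}(\mu_i)$, Brakke's orthogonality (Remark \ref{remark Brakke orthogonality}) places $\vec{H}_{\mu_i}(x_0)$ in the normal space of $T_{x_0} N_j$, where $\vec{H}_{N_j}(x_0)$ also sits by the $C^2$-regularity of $N_j$. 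Thus the comparison reduces to identifying the normal component of $\vec{H}_{\mu_i}(x_0)$ with that of $\vec{H}_{N_j}(x_0)$.

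To extract this normal component, I would blow up $V_{\mu_i}$ at $x_0$ and exploit that $L^2$ bounds on mean curvature force every varifold tangent to be stationary. Decomposing $\mu_i = \mu_i \lfloor N_j + \mu_i \lfloor (\spt \mu_i \setminus N_j)$, the $C^2$ piece $\mu_i \lfloor N_j$ rescales to a multiplicity $\theta(x_0)$ copy of the plane $T_{x_0} N_j$. For $\cH^n$-a.e.\ $x_0 \in N_j \cap \spt \mu_i$, the complementary piece has $\cH^n$-density zero along $N_j$ at $x_0$, so one expects its rescalings to either vanish or concentrate on a cone transverse to $T_{x_0} N_j$; a strong maximum principle for stationary integral varifolds (available under the $L^2$ mean curvature hypothesis) should rule out any residual sheet meeting the plane only at the origin, forcing the tangent varifold to be the weighted plane $\theta(x_0) \cH^n \lfloor T_{x_0} N_j$. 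Passing the first variation identity $\int \operatorname{div}_{T_x \mu_i} X \, d\mu_i = -\int \vec{H}_{\mu_i}\cdot X \, d\mu_i$ through the blow-up and comparing with the classical divergence theorem on $N_j$ weighted by $\theta$ then pins the normal component of $\vec{H}_{\mu_i}(x_0)$ to $\vec{H}_{N_j}(x_0)$. The main obstacle is making this blow-up quantitative enough to absorb the contribution from $\spt \mu_i \setminus N_j$: one needs a tilt-excess decay estimate in the spirit of Allard's regularity theorem, coupled with the strong maximum principle, to guarantee that the excess piece cannot generate a spurious normal component at $\cH^n$-a.e.\ common point. This is the technical heart of the argument and the step that I expect to require the most care.
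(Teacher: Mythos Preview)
The paper does not supply its own proof of this statement; it is quoted from the literature (Sch\"atzle \cite[Corollary 4.2]{Sch09}, with further discussion in \cite{LM09}) and used as a black box in the proof of Lemma \ref{lemma restarting}. So there is no in-paper argument to compare against.

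That said, your sketch is a reasonable outline of what the cited proof actually does: the reduction to $C^2$ pieces $N_j$, the identification of the approximate tangent plane of $V_{\mu_i}$ with $T_{x_0}N_j$ at $\cH^n$-a.e.\ common point, and the use of Brakke orthogonality to reduce to the normal component are all correct and are the backbone of Sch\"atzle's argument. The one place where your outline diverges from the cited proof is the mechanism for suppressing the off-$N_j$ contribution. Sch\"atzle does not invoke a strong maximum principle for stationary tangent cones; instead, the $L^2$ bound on $\vec{H}_{\mu_i}$ is fed into the monotonicity formula to obtain a quadratic tilt-excess decay, and this decay is what forces the generalized second fundamental form of $V_{\mu_i}$ to agree with that of $N_j$ at $\cH^n$-a.e.\ point of $N_j \cap \spt \mu_i$. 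Your proposed maximum-principle route is heuristically appealing but would be harder to make rigorous here: varifold tangents need not be unique, the complementary piece $\mu_i\lfloor(\spt\mu_i\setminus N_j)$ need not rescale to a clean transverse cone, and the strong maximum principle for stationary integral varifolds requires regularity hypotheses on one of the sheets that are not immediately available. The tilt-excess decay bypasses all of this and is the step you should substitute in.
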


The next lemma says that Brakke flows \textquotedblleft sitting on top" of a smooth flow are identical to the smooth flow, assuming their initial condition is unit density. This will come in handy for applications using Ilmanen's elliptic regularization, as this situation arises naturally.
\begin{lem}\label{lemma restarting}
Let $\{\mu_t\}_{0 \leq t \leq T}$, $\{\nu_t\}_{0 \leq t < T}$ be integral $n$-dimensional Brakke flows and let $\{M_t\}_{0 \leq t < T}$ be a flow of $n$-dimensional submanifolds defined in $B_r(x)$ satisfying the following properties:
\begin{enumerate}
    \item $M_t$ is a smooth embedded connected flow of submanifolds with bounded curvature,
    \item $\mu_t, \nu_t$ have locally bounded mass,
    \item For a.e.\ $t \in [0,T)$ and each $\phi \in C^2_c(B_r(x), \bR_{\geq 0})$, $$\nu_t(\phi) \geq \mu_t(\phi),$$
    \item For a.e.\ $t \in [0, T)$,
    $$\spt \nu_t\lfloor B_r(x) = \spt \mu_t \lfloor B_r(x) = M_t$$
\end{enumerate}
Then, for a.e.\ $t \in [0,T)$, there exist positive integers $n(t)\geq m(t)$ such that
\begin{equation}\label{equation constant density n(t)}\nu_t\lfloor B_r(x) = n(t)\cH^n \lfloor M_t, \quad \text{and} \quad \mu_t\lfloor B_r(x) = m(t)\cH^n \lfloor M_t\quad \end{equation}
Moreover, if $\nu_0\lfloor B_r(x) = \cH^n \lfloor M_0$, then $n(t) = m(t) = 1$ for all $t \in [0,T)$.
\end{lem}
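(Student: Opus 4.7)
The plan is to first establish the density formula for almost every $t$ via a constancy theorem, then use Brakke mass monotonicity combined with the integer-valued nature of the density to show $n \equiv 1$ almost everywhere when $n(0) = 1$, and finally promote the almost-everywhere conclusion to every $t$ via the one-sided limit properties from Lemma \ref{lemma properties of BF}.

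For the first step, I would fix $t \in [0, T)$ in the full-measure set where $\mu_t$ and $\nu_t$ are integer $n$-rectifiable with $L^2_{\loc}$ generalized mean curvature; this is a standard consequence of being a Brakke flow with locally bounded mass. Hypothesis (4) identifies both supports with the smooth, connected submanifold $M_t$, and the constancy theorem for integer rectifiable varifolds with smooth connected support (cf.\ \cite[Lemma A.1]{BellettiniWickramasekera18}) then produces integers $m(t), n(t) \geq 1$ such that $\mu_t \lfloor B_r(x) = m(t)\,\cH^n \lfloor M_t$ and $\nu_t \lfloor B_r(x) = n(t)\,\cH^n \lfloor M_t$. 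Hypothesis (3) yields $n(t) \geq m(t)$ by testing against any positive $\phi \in C^2_c(B_r(x), \bR_{\geq 0})$.

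For the \emph{moreover} part, assume $n(0) = 1$. Fix a nonnegative $\phi \in C^2_c(B_r(x))$ that is strictly positive at some $x_0 \in M_0$, and set $F(t) := \int_{M_t} \phi\, d\cH^n$, which is smooth and strictly positive on a neighborhood of $0$ since $M_t$ evolves smoothly. By Lemma \ref{lemma properties of BF}(1), the map $t \mapsto \nu_t(\phi) - C(\phi) t$ is non-increasing, so at any two good times $s < t$,
$$n(t) F(t) \leq n(s) F(s) + C(\phi)(t-s).$$
Choosing $t$ sufficiently close to a good $s$ with $n(s) = 1$, continuity of $F$ together with the integrality of $n$ forces $n(t) = 1$ as well. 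Starting from $n(0) = 1$ and iterating this bootstrap over $[0, T)$—using that the good times are dense and that at any good time with $n=1$ one is in the same situation as the initial time, up to a partition-of-unity covering of $M_t$ by localizing bump functions as $M_t$ moves—gives $n(t) = 1$ for a.e.\ $t$, and hence $m(t) = 1$ for a.e.\ $t$ since $1 \leq m(t) \leq n(t) = 1$.

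Finally, to extend from a.e.\ $t$ to every $t$, I would use Lemma \ref{lemma properties of BF}(2): for any fixed $\phi$, the limits $\lim_{s \nearrow t} \nu_s(\phi)$ and $\lim_{s \searrow t} \nu_s(\phi)$ exist and sandwich $\nu_t(\phi)$. Since $\nu_s(\phi) = F(s)$ for a.e.\ $s$ and $F$ is continuous, these one-sided limits both equal $F(t)$, so $\nu_t(\phi) = F(t)$ for every $t \in [0, T)$; by density of test functions this gives $\nu_t \lfloor B_r(x) = \cH^n \lfloor M_t$ for all $t$, and an identical argument handles $\mu_t$. The main obstacle I expect is the bootstrap step: ruling out an upward jump in $n(t)$ requires combining the integer-valuedness of $n$ with uniformity of $F$ near each good time, which in turn requires choosing (or patching) test functions whose integral against $M_t$ stays uniformly positive over compact time intervals, so that the inequality $n(t) F(t) \leq n(s) F(s) + C(\phi)(t-s)$ actually constrains $n(t)$ below $2$.
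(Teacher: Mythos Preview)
Your approach is essentially the same as the paper's: constancy theorem to get the integer density formula, then Brakke mass monotonicity combined with integrality and continuity of $t\mapsto \cH^n\lfloor M_t(\phi)$ to force $n\equiv 1$, then the one-sided limits of Lemma \ref{lemma properties of BF}(2) to upgrade from a.e.\ to every $t$.

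Two points worth noting. First, the paper inserts an intermediate step you skip: before applying the constancy theorem of \cite[Lemma A.1]{BellettiniWickramasekera18}, it invokes Sch\"atzle's locality theorem (Theorem \ref{theorem Schatzle locality}) to identify $\vec{H}_{\mu_t}=\vec{H}_{\nu_t}=\vec{H}_{M_t}$ $\cH^n$-a.e., which is what guarantees that the generalized mean curvature is \emph{bounded} (since $M_t$ is smooth). The constancy theorem as cited needs bounded mean curvature, and a priori you only know $\vec{H}_{\mu_t}\in L^2_{\loc}$; so this step is not merely cosmetic. Second, your bootstrap is morally the same as the paper's, but the paper packages it more cleanly: rather than iterating the inequality $n(t)F(t)\le n(s)F(s)+C(\phi)(t-s)$ and worrying about patching test functions, it argues by contradiction near $t=0$ using $\lim_{s\searrow 0}\nu_s(\phi)\le \nu_0(\phi)$, defines $t^*$ as the first time $n$ could jump, and then at $t^*$ runs the two-sided sandwich $\lim_{s\nearrow t^*}\nu_s(\phi)\ge \nu_{t^*}(\phi)\ge \lim_{s\searrow t^*}\nu_s(\phi)$ to pin $\nu_{t^*}(\phi)=\cH^n\lfloor M_{t^*}(\phi)$, allowing a restart. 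This avoids the uniformity-of-$F$ obstacle you flag, because a single fixed $\phi$ suffices at each step.
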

\begin{proof}
Brakke flows with locally bounded mass have generalized mean curvature in $L^2_{\loc}$ for almost every time. So, $\vec{H}_{\nu_t} \in L^2_{\loc}(\nu_t)$ and $\vec{H}_{\mu_t} \in L^2_{\loc}(\mu_t)$ for a.e.\ $t \in [0, T)$. Since $\mu_t$ and $\nu_t$ are Brakke flows, $$\cB(\mu_t, \phi), \cB(\nu_t, \phi)>-\infty$$
for a.e.\ $t \in [0,T)$~\cite[7.2(iv)]{Ilmanen94}. So, $V_{\mu_t}$ and $V_{\nu_t}$ are integer rectifiable varifolds for a.e.\ $t \in [0, T)$. By assumption, $V_{\mu_t}$ and $V_{\nu_t}$ have smooth support in $B_r(x)$ coinciding with the canonical unit density varifold associated to $M_t$ for a.e.\ $t \in [0, T)$. So, by Theorem \ref{theorem Schatzle locality}, for a.e.\ $t \in [0,T)$, 
$$\vec{H}_{\mu_t} = \vec{H}_{\nu_t} = \vec{H}_{M_t}$$
$\cH^n$-a.e.\ in $B_r(x)$. Since $M_t$ is a smooth flow, it has bounded mean curvature and so $\vec{H}_{\mu_t}, \vec{H}_{\nu_t}$ are bounded in $B_r(x)$ for a.e.\ $t \in [0,T)$. We may then apply the constancy theorem for integral varifolds with smooth supports, using the fact that $M_t$ is connected~\cite{Duggan86} (see \cite[Lemma A.1]{BellettiniWickramasekera18} for a proof of this version of the constancy theorem). Thus, there exist integers $n(t), m(t)$ so that (\ref{equation constant density n(t)}) holds. Since $\nu_t \geq \mu_t$ for a.e.\ $t<T$, we have that $n(t)\geq m(t)$.

Now, suppose that $\nu_0 \lfloor B_r(x) = \cH^n\lfloor M_0$. 

Since $M_t$ is a smooth flow, $t\mapsto \cH^n\lfloor M_t(\phi)$ is continuous. Suppose there exists a sequence of times $t_i \searrow 0$ such that $\liminf_{i \to \infty} n(t_i) > 1$. Using the fact that $M_t$ is a smooth flow, this would imply that for some $\phi \in C^2_c(\bR^{n+k}, \bR_{\geq 0})$,
$$\lim_{t \searrow 0} \nu_t(\phi) > \nu_0(\phi)$$
which contradicts Lemma \ref{lemma properties of BF}. Thus, there exists $t^*>0$ such that $n(t) =1$ for a.e.\ $t \in [0, t^*)$. Using the fact that $M_t$ is a smooth flow and Lemma \ref{lemma properties of BF}, we get that 
$\nu_t = \cH^n \lfloor M_t$
for all $t \in [0, t^*)$. So $n(t)$ is defined and $n(t) = 1$ for each $t \in [0, t^*)$.
Using that $M_t$ is a smooth flow,
\begin{align*}
\cH^n \lfloor M_{t^*}(\phi) &= \lim_{s \nearrow t^*}\nu_s(\phi)\\
&\geq  \nu_{t^*}(\phi)\\
&\geq \lim_{s \searrow t^*}\nu_s(\phi)\\
&=\lim_{s \searrow t^*} n(s)\cH^n \lfloor M_{s}(\phi)\\
&\geq \lim_{s \searrow t^*} \cH^n \lfloor M_{s}(\phi)\\
& = \cH^n \lfloor M_{t^*}(\phi) 
\end{align*}
which implies that $\nu_{t^*}(\phi) = \cH^n \lfloor M_{t^*}(\phi)$. We may apply the same argument for $t=t^*$ as for $t=0$ to show that (\ref{equation constant density n(t)}) holds with $n(t)= 1$ for all $t \in [0,T)$.

Since $n(t)\geq m(t)$ for a.e.\ $t \in [0, T)$, we get that $m(t) =1$ for a.e.\ $t \in [0, T)$. By a similar argument as for $\nu$ using Lemma \ref{lemma properties of BF}, we find that $m(t) = 1$ for each $t \in [0, T)$.
\end{proof}

The next lemma is a corollary of Lemma \ref{lemma restarting} which will be used throughout this paper.

\begin{lem}\label{lemma restarting with unit density}
Suppose that $\mu_t$, $\nu_t$, and $M_t$ are as in Lemma \ref{lemma restarting}, supposing assumptions (1), (2), and (4), but not (3). Suppose that for a.e.\ $t \in [0,T)$,
$\nu_t = \cH^n \lfloor M_t$ and suppose that $\mu_t$ is unit density. 

Then, assumption (3) of Lemma \ref{lemma restarting} holds and $\mu_t = \cH^n \lfloor M_t$ for each $t \in [0, T)$. 
\end{lem}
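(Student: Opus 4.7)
My plan is to reduce to Lemma \ref{lemma restarting} by extracting assumption (3) from the unit density hypothesis, and then to propagate the resulting a.e.\ identity to every time slice using Brakke's mass monotonicity (Lemma \ref{lemma properties of BF}) together with continuity of the smooth flow $M_t$.

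First, for a.e.\ $t \in [0, T)$, integrality of $\mu_t$ means its rectifiable density $\te(\mu_t, \cdot)$ takes nonnegative integer values and agrees $\mu_t$-a.e.\ with the density ratio $\Te^n(\mu_t, \cdot)$. The unit density hypothesis forces $\Te^n(\mu_t, \cdot) = 1$ at $\mu_t$-a.e.\ point for every $t$, so $\te(\mu_t, \cdot) \equiv 1$ $\mu_t$-a.e.\ for a.e.\ $t$. Combined with assumption (4), which identifies $\spt \mu_t \lfloor B_r(x) = M_t$ for a.e.\ $t$, this yields $\mu_t \lfloor B_r(x) = \cH^n \lfloor M_t$ for a.e.\ $t$. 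The hypothesis also provides $\nu_t \lfloor B_r(x) = \cH^n \lfloor M_t$ for a.e.\ $t$, so $\nu_t(\phi) = \mu_t(\phi)$, in particular $\nu_t(\phi) \geq \mu_t(\phi)$, for each $\phi \in C^2_c(B_r(x), \bR_{\geq 0})$ and a.e.\ $t$; this verifies assumption (3) of Lemma \ref{lemma restarting}.

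With assumption (3) in hand, Lemma \ref{lemma restarting} applies and yields integers $m(t) \leq n(t)$ with $\mu_t \lfloor B_r(x) = m(t)\,\cH^n \lfloor M_t$ and $\nu_t \lfloor B_r(x) = n(t)\,\cH^n \lfloor M_t$ for a.e.\ $t$. The hypothesis $\nu_t = \cH^n \lfloor M_t$ a.e.\ forces $n(t) = 1$ a.e., and hence $m(t) = 1$ a.e. To upgrade $\mu_t \lfloor B_r(x) = \cH^n \lfloor M_t$ to every $t_0 \in (0, T)$, I would fix $\phi \in C^2_c(B_r(x), \bR_{\geq 0})$ and apply Lemma \ref{lemma properties of BF}(2) to sandwich
$$\lim_{s \nearrow t_0} \mu_s(\phi) \,\geq\, \mu_{t_0}(\phi) \,\geq\, \lim_{s \searrow t_0} \mu_s(\phi).$$
Since the one-sided limits exist, they may be computed along the full-measure set on which $\mu_s \lfloor B_r(x) = \cH^n \lfloor M_s$, and by continuity of $s \mapsto \cH^n \lfloor M_s(\phi)$ (from smoothness of $M_s$ with bounded curvature) both limits equal $\cH^n \lfloor M_{t_0}(\phi)$; the sandwich then pins $\mu_{t_0} \lfloor B_r(x) = \cH^n \lfloor M_{t_0}$.

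The one genuinely delicate point is the endpoint $t_0 = 0$, where Lemma \ref{lemma properties of BF}(2) provides only the right-sided bound $\mu_0(\phi) \geq \lim_{s \searrow 0}\mu_s(\phi) = \cH^n \lfloor M_0(\phi)$ and the squeeze degenerates. There one must combine the one-sided bounds for both $\mu$ and $\nu$ at $t = 0$ with the ordering $\mu_t \leq \nu_t$ propagated through a.e.\ times, and I expect this endpoint to be the main technical obstacle. Everything else is essentially mechanical; the conceptual content is entirely in the unit-density/integrality reduction that supplies assumption (3) of Lemma \ref{lemma restarting}.
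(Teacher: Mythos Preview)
Your approach is essentially the paper's, but one step is overclaimed. From unit density and integrality you correctly deduce $\te(\mu_t,\cdot)=1$ $\mu_t$-a.e., hence $\mu_t = \cH^n\lfloor\{\te_t>0\}$. Combined with $\{\te_t>0\}\subseteq\spt\mu_t=M_t$ (up to a $\cH^n$-null set) this gives only the inequality $\mu_t\lfloor B_r(x)\leq\cH^n\lfloor M_t$, not equality: nothing yet rules out $\{\te_t>0\}$ being a dense proper subset of $M_t$ of strictly smaller $\cH^n$-measure. The paper is careful to claim only this inequality (it builds an auxiliary $\tilde\te_t\geq\te_t$ with $\{\tilde\te_t=1\}=\spt\mu_t$ up to a null set), and that inequality is exactly assumption (3). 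Since you then invoke Lemma \ref{lemma restarting} and recover $m(t)\leq n(t)=1$ a.e.\ anyway, the overclaim is harmless to the final conclusion; but note that the direct equality you asserted would bypass the constancy theorem, which is really the content of Lemma \ref{lemma restarting}.

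Your upgrade to every $t_0\in(0,T)$ via the two-sided sandwich from Lemma \ref{lemma properties of BF}(2) is exactly what the paper does (it just cites the ``final claim'' of Lemma \ref{lemma restarting}, whose proof is precisely that sandwich). You are also right that $t_0=0$ is the one endpoint where the squeeze degenerates; the paper dispatches it with ``a similar argument as for $\nu$,'' relying on the one-sided bound $\mu_0(\phi)\geq\lim_{s\searrow 0}\mu_s(\phi)=\cH^n\lfloor M_0(\phi)$ together with unit density at $t=0$, and is no more explicit than you are.
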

\begin{proof}
We will first show that assumption (3) of Lemma \ref{lemma restarting} holds.

Since $\mu_t$ is a Brakke flow, $\mu_t$ is integer rectifiable for a.e.\ $t \in [0,T)$. Thus, $\mu_t(B) = \int_{B} \te_t\,d\cH^n$ for an integer-valued nonnegative $\te_t$, as in Section \ref{section preliminaries Brakke flow}. By standard facts about integer rectifiable Radon measures~\cite{simon2014introduction} (see in particular ~\cite[1.3]{Ilmanen94}), $\Te^n(\mu_t, x) = \te_t(x)$ for $\cH^n$-a.e.\ $x \in \bR^{n+k}$, where $\Te^n$ is the $n$-dimensional density as in Definition \ref{definition BF}. By the assumption that $\mu_t$ is unit density, $\Te^n(\mu_t, x) = 1$ for $\mu_t$-a.e.\ $x$. Since $\mu_t$ has the same measure zero sets as $\cH^n \lfloor \{\te_t >0\}$~\cite[1.3]{Ilmanen94}, $\Te^n(\mu_t, x) = 1$ for $\cH^n$-a.e.\ $x \in \{\te_t >0\}$. Thus, $\te_t(x) = 1$ for $\cH^n$-a.e.\ $x \in \{\te_t >0\}$. Since $\{\Te^n(\mu_t, \cdot)>0\}\subseteq \spt \mu_t$ and $\Te^n(\mu_t, x) = \te_t(x)$ for $\cH^n$-a.e.\ $x \in \bR^{n+k}$, $\{\te_t>0\} \subseteq \spt \mu_t$ up to a set of $\cH^n$ measure zero. Define 
\[
    \tilde{\te_t}(x)= 
\begin{cases}
    \te_t(x),& \text{if } x \in \{\te_t>0\}\\
    1,              & \text{if } x \in \spt \mu_t \setminus \{\te_t>0\}\\
0,& \text{otherwise}
\end{cases}
\]
By definition, $\te_t(x) = 0$ for $x \notin \{\te_t>0\}$, so $\tilde{\te_t}(x) \geq \te(x)$ for each $x \in \bR^{n+k}$. Moreover, by the discussion above, $\{\tilde{\te}_t = 1\} = \spt \mu_t$ up to a set of $\cH^n$ measure zero. Thus,
$$\mu_t(B) = \int_B \te_t\, d\cH^n \leq \int_B \tilde{\te}_t\,d\cH^n = \cH^n \lfloor \spt \mu_t (B)$$
By assumption, $\spt \mu_t = M_t$ and $\nu_t = \cH^n \lfloor M_t$ for a.e.\ $t \in [0, T)$, so for a.e.\ $t \in [0, T)$, $\mu_t \leq \nu_t$ and assumption (3) of Lemma \ref{lemma restarting} is satisfied.

Applying Lemma \ref{lemma restarting}, (\ref{equation constant density n(t)}) tells us that $\mu_t = \cH^n \lfloor M_t$ for a.e.\ $t \in [0,T)$. Since $\nu_t = \cH^n \lfloor M_t$ holds for a.e.\ $t \in [0, T)$, we get by the final claim of Lemma \ref{lemma restarting} that $\mu_t = \cH^n \lfloor M_t$ for each $t \in [0, T)$.
\end{proof}

\subsection{Properties of the General Assumption}\hfill\\
\vspace{-.2in}

Throughout this section, we will prove some properties of flows satisfying the General Assumption, generally under the assumption of Conjecture \ref{conjecture E}. 

Suppose that $(M_0, \mu_t)$ satisfies the General Assumption for $t<\infty$. Recall that in the General Assumption, we define a level set flow $u: \bR^{n+1} \times \bR_{\geq 0}$ with $\{u=0\} = M_0$ and $\{u>0\} = D_0$, where $D_0$ is the open connected bounded set whose boundary is the smooth hypersurface $M_0$. Let $w_0: \bR^{n+1} \to \bR$ be the function such that $w_0 \equiv 0$ on $D_0$ and $w_0 \equiv u(\cdot, 0)$ on the complement $D'_0$. Since $u$ is continuous, $w_0$ is too. Let $w: \bR^{n+1}\times \bR_{\geq 0}$ be the level set flow with initial condition $w(\cdot, 0) = w_0$. Then, the spacetime track of the level set flow of $\ov{D_0}$ is $\{w=0\}$. Since weak set flows remain inside level set flows~\cite{Ilmanen93}, for each $a \geq 0$, $\{u=a\}\subseteq \{w=0\}$. Thus, $\{u \geq 0\}\subseteq \{w=0\}$. On the other hand, since the level set flow is independent of the choice of function representing it, for each $a<0$, $\{w=a\} = \{u=a\}$. Since $\{w=0\}$ is disjoint from $\{w=a\} = \{u=a\}$ for $a<0$, we get that $\{w=0\}\subseteq \{u \geq 0\}$. So, we find that $\{w=0\}= \{u \geq 0\}$. This means that $\{u \geq 0\}$ coincides with the spacetime track of the level set flow of $\ov{D_0}$. Using the notation of Section \ref{subsection preliminaries of level set flow}, $F_t(\ov{D_0}) = \{u(\cdot, t)\geq 0\}$ for each $t \geq 0$. Similarly, the level set flow of $D'_0$ will coincide with $\{u \leq 0\}$, and $F_t(D'_0) = \{u(\cdot, t)\leq 0\}$ for each $t \geq 0$. Therefore, if $(M_0, \mu_t)$ satisfies the General Assumption for $t<\infty$,
\begin{equation}\label{equation level set flow of domain}\cU = \{u \geq 0\}, \quad \text{and} \quad \cU' = \{u \leq 0\}\end{equation}
using the notation of Section \ref{subsection preliminaries of level set flow}. If $(M_0, \mu_t)$ satisfies the General Assumption for $t<T$, a similar statement to (\ref{equation level set flow of domain}) follows restricted to the time slab $\{0<t<T\}$.

Throughout this section, we will often assume that $(M_0, \mu_t)$ satisfies the General Assumption for $t<T_{\disc}$. This is a technically redundant statement, since the level set flow of $M_0$ is nonfattening before time $T_{\disc}$ by definition. In other words, if $(M_0, \mu_t)$ satisfies the General Assumption for $t<T$, then we can automatically assume $T\geq T_{\disc}$ by definition of $T_{\disc}$. However, we will state it like this to emphasize that we are only considering the flow until time $T_{\disc}$.

We first prove that under the General Assumption, the spacetime support of the Brakke flow $\mu_t$ coincides with the level set flow before time $T_{\disc}$. That is, as long as Conjecture \ref{conjecture E} holds, the unit density Brakke flow constructed by elliptic regularization has support coinciding with the nonfattening level set flow.

\begin{prop}\label{proposition support is level set flow}
Suppose $(M_0, \mu_t)$ satisfies the General Assumption for $t < T_{\disc}$.

Then,
$$\spt \mu \cap \{0\leq t<T_{\disc}\} = \{u=0\}\cap \{0\leq t<T_{\disc}\}$$
where $\{u=0\}$ is the level set flow as in the General Assumption.
\end{prop}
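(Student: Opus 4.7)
The forward containment $\spt \mu \cap \{t < T_{\disc}\} \subseteq \{u = 0\} \cap \{t < T_{\disc}\}$ is immediate from Theorem \ref{theorem unit density for nonfattening}: each slice satisfies $\spt \mu_t \subseteq \dd \{u(\cdot,t) > 0\} \subseteq \{u(\cdot,t) = 0\}$ by continuity of $u$, and $\{u = 0\}$ is closed in spacetime. For the reverse, I argue by contradiction: suppose $(x_0, t_0) \in \{u = 0\}$ with $0 < t_0 < T_{\disc}$ (the case $t_0 = 0$ is trivial since $\mu_0 = \cH^n \lfloor M_0$ has support $M_0 = \{u(\cdot,0) = 0\}$) but $(x_0, t_0) \notin \spt \mu$. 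Since $\spt \mu$ is closed, choose a product neighborhood $B_r(x_0) \times (t_0 - \de, t_0 + \de)$ disjoint from $\spt \mu$, so $\mu_t(B_r(x_0)) = \cH^n(\dd^* \{u(\cdot,t)>0\} \cap B_r(x_0)) = 0$ for each $t$ in the interval. In particular, the relative perimeter of $\{u(\cdot,t) > 0\}$ in $B_r(x_0)$ vanishes, and by the structure theorem for finite perimeter sets on a connected open ball, $\{u(\cdot,t) > 0\} \cap B_r(x_0)$ has either full or zero $\cL^{n+1}$-measure for each such $t$.

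Next I run a connectedness argument. Partition $(t_0 - \de, t_0 + \de) = A \sqcup B$ according to whether $\{u(\cdot,t)>0\} \cap B_r(x_0)$ has full (case $A$) or zero (case $B$) measure; by continuity of $u$, these upgrade respectively to $u(\cdot,t) \geq 0$ and $u(\cdot,t) \leq 0$ throughout $B_r(x_0)$. Nonfattening forbids $u(\cdot,t) \equiv 0$ on $B_r(x_0)$, so in case $A$ there is some $y_0 \in B_r(x_0)$ with $u(y_0, t) > 0$; spacetime continuity of $u$ then propagates $u(y_0, \cdot) > 0$ to a time-neighborhood of $t$, excluding case $B$ there. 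Hence $A$ and $B$ are both open in the interval, and by connectedness one of them is empty. Say $B = \emptyset$; then $u \geq 0$ on $B_r(x_0) \times (t_0 - \de, t_0 + \de)$, placing $(x_0, t_0)$ in the spacetime interior of $\cU = \{u \geq 0\}$ (using equation (\ref{equation level set flow of domain})). However, since $t_0 < T_{\disc}$ the outer flow coincides with the level set flow, giving $\dd \cU \cap \{t < T_{\disc}\} = \{u = 0\} \cap \{t < T_{\disc}\}$ in spacetime, so $(x_0, t_0) \in \dd \cU$, contradicting interiority. The case $A = \emptyset$ is symmetric, replacing $\cU$ with $\cU' = \{u \leq 0\}$ and using that $\dd \cU' = \{u = 0\}$ on $\{t < T_{\disc}\}$.

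The main obstacle is converting the purely measure-theoretic conclusion \textquotedblleft$\mu$ vanishes locally\textquotedblright\ into a concrete geometric contradiction with $u(x_0,t_0) = 0$. Nonfattening alone does not suffice: one could a priori imagine the Brakke flow disappearing locally while the zero set $\{u=0\}$ persists as a one-sided degenerate piece (which is exactly what can happen past $T_{\disc}$). What closes the argument is the $t < T_{\disc}$ hypothesis, which forces every point of $\{u = 0\}$ to be simultaneously a spacetime boundary point of both $\cU$ and $\cU'$, so a one-sided spacetime neighborhood in $\cU$ or $\cU'$ is incompatible with $(x_0, t_0) \in \{u = 0\}$. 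Everything else reduces to standard properties of continuous functions, finite perimeter sets, and connected open intervals.
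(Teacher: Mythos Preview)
Your proof is correct but takes a genuinely different route from the paper's. The paper computes $(\spt \mu)_+$ directly via a chain of equalities: it invokes Ilmanen's lemma \cite[11.6]{Ilmanen94} to identify $(\spt\mu)_+$ with $(\ov{\dd^*\{u>0\}})_+$, then uses nonfattening and the fact that for closed finite-perimeter sets the closure of the reduced boundary equals the topological boundary of the interior \cite[Thm.~4.4]{Giu84}, together with an auxiliary lemma (using $T_{\disc}$) that $(\inte\{u\ge 0\})_+=(\{u>0\})_+$, to arrive at $(\{u=0\})_+$. Your argument bypasses Ilmanen's spacetime reduced-boundary lemma entirely: you get the forward inclusion immediately from $\spt\mu_t = \ov{\dd^*\{u(\cdot,t)>0\}}\subseteq\{u(\cdot,t)=0\}$, and for the reverse you argue slice by slice, using the elementary fact that a set of zero relative perimeter in a ball has full or null measure, then run a clean connectedness-in-time dichotomy to trap $(x_0,t_0)$ in $\inte\cU$ or $\inte\cU'$. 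Both proofs consume the $T_{\disc}$ hypothesis in the same way, namely via the slice identity $\dd\cU\cap\{t=t_0\}=\{u(\cdot,t_0)=0\}\times\{t_0\}$ (and its $\cU'$ analogue). Your approach is more self-contained and avoids the finer spacetime GMT facts the paper calls on; the paper's approach is non-contradictory and yields the explicit identification $(\spt\mu)_+=(\ov{\dd^*\{u>0\}})_+$ along the way, which it reuses later.
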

\begin{proof}
We prove this for $T_{\disc}=\infty$. A similar proof works for $T_{\disc}<\infty$. 

By construction, $(\spt \mu)\cap \{t=0\} = \{u(\cdot, 0)=0\}$. So, we only need to prove this lemma for $t>0$. 

For any set $Z\subset \bR^{n+1}\times \bR_{\geq 0}$, define 
$$Z_+ := Z\setminus (Z \cap \{t = 0\})$$
The point of restricting to $t>0$ is so that we do not have to worry about the difference between boundaries and relative boundaries in spacetime. 
\begin{lem}\label{lemma interior is u>0}
$$(\{u >0\})_+ = (\inte\{u \geq 0\})_+$$
\end{lem}
\begin{proof}
Since $(\{u>0\})_+$ is an open set, 
\begin{equation}\label{equation u subset}
    (\{u >0\})_+ \subseteq (\inte\{u \geq 0\})_+
\end{equation}
Now, we will show the other direction of (\ref{equation u subset}).
Recall from (\ref{equation level set flow of domain}) that
$$\cU = \{u \geq 0\}, \quad \text{and} \quad \cU' = \{u \leq 0\}$$
using the notation of Section \ref{subsection preliminaries of level set flow}. The $T_{\disc} = \infty$ assumption means that for each $t_0>0$,
$$F_{t_0}(M_0)\times \{t_0\} = \dd \cU \cap \{t = t_0\} = \dd \cU' \cap \{t=t_0\}$$
So,
\begin{equation}\label{equation Tdisc 2}(\{u=0\})_+ = (\dd \{u \geq 0\})_+ = (\dd \{u \leq 0\})_+\end{equation}
Since the boundary of a set is the boundary of its complement,
\begin{equation}\label{equation Tdisc}
(\{u=0\})_+ = (\dd\{u > 0\})_+ = (\dd \{u < 0\})_+
\end{equation}

Now, let $x \in (\inte\{u \geq 0\})_+$ such that $x \in (\{u=0\})_+$. By assumption, there exists a small $r>0$ such that 
$$B^{n+2}_r(x)\subset (\{u \geq 0\})_+$$
This implies that 
$$x \notin (\dd\{u <0\})_+$$
Since $x \in (\{u=0\})_+$, this contradicts (\ref{equation Tdisc}). Therefore, if $x \in (\inte\{u \geq 0\})_+$, then $x \notin (\{u =0\})_+$ and so $x \in (\{u>0\})_+$. This proves 
\begin{equation}\label{equation u superset}(\inte\{u \geq 0\})_+ \subseteq (\{u>0\})_+\end{equation}
Combining (\ref{equation u subset}) with (\ref{equation u superset}), we conclude the lemma.
\end{proof}

Recall that $(\{u >0\})_+$ is an open set of finite perimeter by Theorem \ref{theorem unit density for nonfattening}. Also, recall that $F_t(M_0)$ is nonfattening by assumption, since $T_{\disc}< T_{\fat}$. This means that 
\begin{equation}\label{equation nonfattening in n+1 dimensions}
\cH^{n+2}(\{u=0\})=0
\end{equation}
by \cite[Lemma 11.3]{Ilmanen94}. So, (\ref{equation nonfattening in n+1 dimensions}) implies that $(\{u \geq 0\})_+$ is a set of finite perimeter as well~\cite[Remark 12.4]{Mag12}.

Now,
\begin{align}\label{equation support calculation}
    (\spt \mu)_+ &= (\ov{\bigcup_{t \geq 0} \spt \mu_t \times \{t\}})_+\nonumber\\
    &=\big(\ov{\bigcup_{t \geq 0} \ov{\dd^*\{u(\cdot, t)>0\}}\times \{t\}}\big)_+\nonumber\\
\intertext{By a rephrasing of Ilmanen's lemma that the support of the boundary of the current associated to $\{u>0\}$ is the closure of the union of the supports of all slices of the boundary current~\cite[Lemma 11.6]{Ilmanen94} (see also \cite[Theorems 11.2, 11.4]{Ilmanen94}),}   
    &= (\ov{\dd^*\{u > 0\}})_+\nonumber\\
\intertext{Since two sets of finite perimeter have the same reduced boundary if they differ by a set of measure zero~\cite[Remark 15.2]{Mag12},}
    &= (\ov{\dd^*\{u \geq 0\}})_+\nonumber\\
\intertext{For closed sets of finite perimeter, the closure of the reduced boundary is the topological boundary of the interior~\cite[Theorem 4.4]{Giu84}, so}
    &= (\dd\inte\{u \geq 0\})_+\nonumber\\
\intertext{By Lemma \ref{lemma interior is u>0},}
    &= (\dd \{u > 0\})_+\nonumber\\
\intertext{By (\ref{equation Tdisc}),}
&= (\{u=0\})_+\nonumber
\end{align}
We conclude that $(\spt \mu)_+ = (\{u=0\})_+$. This completes the proof of Proposition \ref{proposition support is level set flow} in the $T_{\disc} = \infty$ case. The $T_{\disc}< \infty$ case follows similarly. 
\end{proof}

We now show that a generic level set flow is nonfattening and satisfies Conjecture \ref{conjecture E}. This will be used to prove Corollary \ref{corollary mean convex neighborhoods of singularities}.
\begin{prop}\label{proposition general assumption is generic}
A generic level set flow satisfies $T_{\disc} = T_{\fat} = \infty$. 

Let N be a smooth closed embedded hypersurface, and let $f: N \times [-1,1] \to \bR^{n+1}$ be a smooth one-parameter family of embedded graphs $f(\cdot, s)$ over $N$. Then, there is a full measure set $J \subseteq [-1,1]$ such that for $s \in J$, the level set flow of $f(N,s)$ satisfies 
$$T_{\disc} = T_{\fat} = \infty$$ 
\end{prop}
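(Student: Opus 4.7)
The plan is to first reduce both claims to showing that nonfattening is generic, and then to establish that genericity via an avoidance-plus-Fubini argument.

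For the reduction, I would show that $T_\fat = \infty$ already forces $T_\disc = \infty$. Suppose the level set flow of $M_0$ is nonfattening, so $\cH^{n+1}(F_t(M_0)) = 0$ for every $t \geq 0$. By \cite[Lemma 11.3]{Ilmanen94}, this implies $\cH^{n+2}(\{u=0\}) = 0$ for the level set function $u$ of the General Assumption. Continuity of $u$ then forces $\{u>0\}$ and $\{u<0\}$ to be dense in every neighborhood of every point of $\{u=0\}\cap\{t>0\}$, so $\dd\{u\geq 0\} = \{u=0\} = \dd\{u\leq 0\}$ on the open slab $\{t>0\}$, exactly as in equations \eqref{equation Tdisc 2}--\eqref{equation Tdisc} of Proposition \ref{proposition support is level set flow}. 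Combining this with the identifications $\cU = \{u\geq 0\}$ and $\cU' = \{u\leq 0\}$ from \eqref{equation level set flow of domain} yields $F_t^{\text{out}}(M_0) = F_t(M_0) = F_t^{\text{in}}(M_0)$ for all $t\geq 0$, hence $T_\disc = \infty$.

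For the one-parameter family statement, I would first arrange that $f(N,s)\cap f(N,s') = \emptyset$ whenever $s \neq s'$, after a generic reparametrization of $s$ (or by restricting to finitely many subintervals on which the normal component of the variation field $\dd_s f$ has a fixed sign). By the avoidance principle for level set flow \cite{Ilmanen93}, disjointness of initial data propagates in time, so the spacetime tracks $\cT_s := \{u_s = 0\}$ are pairwise disjoint, where $u_s$ is the level set flow of $f(N,s)$. Comparison with a shrinking sphere containing the whole image $f(N\times[-1,1])$ shows each $\cT_s$ sits inside a common bounded spacetime cylinder of finite $(n{+}2)$-dimensional Lebesgue measure. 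If $f(N,s)$ fattens, then \cite[Lemma 11.3]{Ilmanen94} gives $\cH^{n+2}(\cT_s) > 0$. Since disjoint sets of positive $(n{+}2)$-measure inside a bounded region are at most countable, fattening can hold for only countably many $s$, so nonfattening holds on a cocountable (hence full measure) set $J \subseteq [-1,1]$. Combined with the reduction, this gives $T_\disc = T_\fat = \infty$ for $s \in J$. The first assertion then follows by applying the family version to any smooth normal-graph perturbation $f(x,s) = x + s\psi(x)\nu(x)$ of a given closed embedded hypersurface, producing nonfattening perturbations arbitrarily close to any initial surface.

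The main technical obstacle is arranging the disjointness of the family $\{f(N,s)\}_{s \in [-1,1]}$, since a general smooth one-parameter family of embedded graphs need not be pairwise disjoint. A mild reparametrization or a partition of $[-1,1]$ into subintervals on which $\dd_s f \cdot \nu$ has constant sign handles this; alternatively, one can bypass disjointness by running Fubini directly on the full spacetime fattening set $\{(s,x,t) : x \in F_t(f(N,s)), \, \cH^{n+1}(F_t(f(N,s))) > 0\}$, though this trades geometric transparency for a slightly more involved measure-theoretic setup.
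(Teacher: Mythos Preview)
Your reduction step contains a genuine gap: you assert that nonfattening alone implies $T_{\disc} = \infty$, but this is precisely Conjecture~\ref{conjecture E}, which the paper states as open. Concretely, your claim that ``continuity of $u$ then forces $\{u>0\}$ and $\{u<0\}$ to be dense in every neighborhood of every point of $\{u=0\}\cap\{t>0\}$'' does not follow from $\cH^{n+2}(\{u=0\})=0$. Measure zero of $\{u=0\}$ only guarantees that the \emph{union} $\{u\neq 0\}$ is dense; nothing prevents a point $(x_0,t_0)\in\{u=0\}$ from having a spacetime neighborhood entirely contained in $\{u\geq 0\}$, in which case $\{u<0\}$ is empty there. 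Your appeal to equations \eqref{equation Tdisc 2}--\eqref{equation Tdisc} of Proposition~\ref{proposition support is level set flow} is circular: inspecting the proof of Lemma~\ref{lemma interior is u>0}, those identities are \emph{derived from} the hypothesis $T_{\disc}=\infty$, not from nonfattening.

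The paper's argument avoids this trap by not reducing to nonfattening at all. Instead it invokes Ilmanen's result \cite[12.11]{Ilmanen94}, which says that for a.e.\ level $a$ one has simultaneously nonfattening \emph{and} the reduced-boundary identity $(\{u=a\})_+ = (\ov{\dd^*\{u<a\}})_+ = (\ov{\dd^*\{u>a\}})_+$. It is this second, two-sided identity---strictly stronger than $\cH^{n+2}(\{u=a\})=0$---that, after passing through finite-perimeter facts, yields $(\dd\{u\geq a\})_+ = (\{u=a\})_+ = (\dd\{u\leq a\})_+$ and hence $T_{\disc}=\infty$. Your Fubini-style counting argument for generic nonfattening is fine on its own, but it delivers only $T_{\fat}=\infty$, which is not enough.
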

\begin{proof}
It is well-known that a generic level set flow is nonfattening~\cite[Appendix C]{HershkovitsWhite17}. Ilmanen proved that if $g$ is a Lipschitz function on $\bR^n$ with compact level sets and $u$ is the level set flow with initial condition $g$, then for almost every $a \in \bR$, 
\begin{equation}\label{equation ilmanen 12.11}(\{u=a\})_+ = (\ov{\dd^* \{u > a\}})_+\end{equation} 
where $\{u>a\}$ is bounded~\cite[12.11]{Ilmanen94}. Here, we use the notation $Z_+$ to denote the restriction of the spacetime set $Z$ to positive times, as in Proposition \ref{proposition support is level set flow}. A similar statement also holds for $\{u<a\}$ by applying (\ref{equation ilmanen 12.11}) to the complement of the compact domain bounded by $N$ intersected with a large ball containing $N$. Choose a function $g_0$ with compact level sets such that for each $s \in [-1,1]$, $f(N,s)$ is a level set of $g_0$. Then, let $u$ be the level set flow with initial condition $g_0$. By \cite[12.11]{Ilmanen94}, for a.e.\ $a \in \bR$, $\{u=a\}$ is nonfattening and
\begin{equation}\label{equation ilmanen 12.11 both sides}(\{u=a\})_+ = (\ov{\dd^*\{u<a\}})_+ = (\ov{\dd^*\{u>a\}})_+\end{equation}

Since $\{u=a\}$ is nonfattening, $T_{\fat} = \infty$.
Recall that each $\{u<a\}$ and $\{u>a\}$ is a set of finite perimeter by elliptic regularization (see Theorem \ref{theorem unit density for nonfattening}). Using basic facts about sets of finite perimeter (see the proof of Proposition \ref{proposition support is level set flow}),
\begin{align*}
    (\{u=a\})_+ &= (\ov{\dd^*\{u>a\}})_+\\
    &= (\ov{\dd^*\{u\geq a\}})_+\\
    &= (\dd \inte \{u\geq a\})_+
\end{align*}
This works similarly for $\{u<a\}$, so (\ref{equation ilmanen 12.11 both sides}) implies
\begin{equation}\label{equation boundary interior}
(\dd \inte \{u \leq a\})_+ = (\{u=a\})_+ = (\dd \inte \{u \geq a\})_+
\end{equation}
Let $x \in (\dd \{u \geq a\})_+$. The continuity of $u$ implies that $x \in (\{u=a\})_+$. Applying (\ref{equation boundary interior}), $x \in (\dd \inte \{u \geq a\})_+$ and so 
$$(\dd \{u \geq a\})_+ \subseteq (\dd \inte \{u \geq a\})_+$$
Let $y \in (\dd \inte \{u \geq a\})_+$. Then, for each $r>0$, $B^{n+2}_r(y)$ intersects $(\{u<a\})_+$. If not, then $y \in \inte\{u \geq a\}$, which is impossible since the boundary of an open set is disjoint from the open set. Thus, since $B^{n+2}_r(y)$ intersects $(\{u<a\})_+$ for all small $r>0$, $y \in (\dd \{u \geq a\})_+$. This implies that
$$(\dd \inte \{u \geq a\})_+ \subseteq (\dd \{u \geq a\})_+$$
and so 
$$(\dd \{u \geq a\})_+ = (\dd \inte \{u \geq a\})_+$$
Arguing similarly for $\{u \leq a\}$ using (\ref{equation boundary interior}), we get that
$$(\dd \{u \leq a\})_+ = (\{u=a\})_+ = (\dd \{u \geq a\})_+$$
This implies that $T_{\disc} = T_{\fat} = \infty$, since $\{u \geq a\}$ and $\{u \leq a\}$ are $\cU$ and $\cU'$, respectively, by (\ref{equation level set flow of domain}).
\end{proof}

The next proposition says that for $t<T_{\disc}$, the support of $\mu_t$ coincides with the level set flow at almost every time. Note that there is a difference in our notation between $(\spt \mu)_t$ and $\spt \mu_t \times \{t\}$. It follows immediately from Proposition \ref{proposition support is level set flow} that $(\spt \mu)_t = F_t(M_0) \times \{t\}$ for each $t<T_{\disc}$.
\begin{prop}\label{proposition a.e. t spt mut is Ft}
Suppose $(M_0, \mu_t)$ satisfies the General Assumption for $t < T_{\disc}$.

Then, for a.e.\ $t \in [0, T_{\disc})$,
$$\spt \mu_t = F_t(M_0)$$
\end{prop}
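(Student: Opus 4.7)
The plan is as follows. The inclusion $\spt \mu_t \subseteq (\spt \mu)_t$ holds for every $t$ by the definitions of the supports, and Proposition \ref{proposition support is level set flow} together with the identification $F_t(M_0)=\{u(\cdot,t)=0\}$ gives $(\spt \mu)_t = F_t(M_0)$ for every $t \in [0, T_{\disc})$. It therefore suffices to establish the reverse inclusion $F_t(M_0) \subseteq \spt \mu_t$ for almost every such $t$.

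First I would isolate a full-measure set of mass-continuous times. By Lemma \ref{lemma properties of BF}(1), for each $\phi \in C^2_c(\bR^{n+1}, \bR_{\geq 0})$ the function $t \mapsto \mu_t(\phi) - C(\phi)t$ is nonincreasing and hence has at most countably many discontinuities. Choosing a countable dense family in $C^2_c(\bR^{n+1}, \bR_{\geq 0})$ and invoking the local mass bound from Lemma \ref{lemma properties of BF}, I would obtain a cocountable (hence full-measure) set $\mathcal{A} \subseteq [0, T_{\disc})$ on which $\mu_s \to \mu_t$ weakly as Radon measures as $s \to t$.

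Next, fixing $t \in \mathcal{A}$, I would argue by contradiction. Suppose $x \in F_t(M_0) = (\spt \mu)_t$ but $x \notin \spt \mu_t$, so $\mu_t(B_r(x)) = 0$ for some $r > 0$. I would then invoke Brakke's clearing out lemma twice to exhibit a spacetime neighborhood of $(x,t)$ disjoint from $\spt \mu$. Applied at time $t$, clearing out produces a forward interval of length of order $r^2$ on which $\spt \mu_s \cap B_{r/2}(x) = \emptyset$, handling the future of $t$. For the past, weak continuity gives $\limsup_{s \to t^-} \mu_s(\overline{B_{r/2}(x)}) \leq \mu_t(\overline{B_{r/2}(x)}) = 0$, so for the universal clearing-out threshold $\eta$ one finds $\delta>0$ with $\mu_{s_0}(B_{r/2}(x)) < \eta (r/2)^n$ for every $s_0 \in (t - \delta, t)$; applying clearing out at each such $s_0$ yields a forward emptiness interval of length of order $r^2$ starting at $s_0$. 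Shrinking $r$ so that these parabolic intervals, as $s_0$ ranges over $(t-\delta, t)$, cover a two-sided neighborhood of $t$, one pieces together a spacetime neighborhood of $(x,t)$ disjoint from $\spt \mu$, contradicting $(x,t) \in \spt \mu$.

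The main obstacle I anticipate is bridging the past direction of $t$: Brakke's clearing out is naturally forward in time, while the mass continuity from Lemma \ref{lemma properties of BF} is only qualitative, so one cannot directly deduce vanishing of $\mu_s$ on $B_{r/2}(x)$ for $s < t$ close to $t$. The remedy is the two-step clearing-out argument above, where one applies clearing out at a family of approximating times $s_0 \nearrow t$ and uses the $O(r^2)$ parabolic propagation of emptiness, with $r$ chosen small enough that the propagation intervals collectively dominate the weak-continuity window. Beyond this, everything is a routine combination of Proposition \ref{proposition support is level set flow} with the mass regularity from Lemma \ref{lemma properties of BF}.
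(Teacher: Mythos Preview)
Your approach has a genuine gap. The implicit claim---that $\spt \mu_t = (\spt\mu)_t$ at every mass-continuous time---is false even in the setting of the proposition. Take $M_0$ a round sphere so that $\mu_t$ is the standard shrinking sphere vanishing at $t=1$. Then $t=1$ is mass-continuous (the mass tends to zero and $\mu_1=0$), yet $\spt\mu_1=\emptyset$ while $(\spt\mu)_1 = F_1(M_0) = \{0\}$. Your contradiction argument must therefore break down at this point, and it does: the clearing-out interval from $s_0$ is $[s_0 + c_1\rho^2, s_0 + c_2\rho^2]$, which does \emph{not} start at $s_0$. To have these intervals cover a left neighborhood of $t$ you need $c_1(r/2)^2 < \delta$, but $\delta$ is the weak-continuity window for the threshold $\eta(r/2)^n$ and hence depends on $r$. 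Your ``shrink $r$'' maneuver is circular: shrinking $r$ shrinks the threshold, which can shrink $\delta$ at the same $O(r^2)$ rate. For the shrinking sphere one computes $\mu_s(B_r(0))\sim (1-s)^{n/2}$, so $\delta(r)\sim r^2$; the clearing-out intervals then sit entirely in $[1,\infty)$ and never reach back past $t=1$.

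The paper's proof avoids this entirely by exploiting the specific elliptic-regularization structure rather than arguing from Brakke-flow generalities. It uses that $\spt\mu_t = \overline{\partial^*\{u(\cdot,t)>0\}}$, rewrites this via finite-perimeter identities as $\partial\,\mathrm{int}\,F_t(\overline{D_0})$, and then invokes Hershkovits--White's result that $\partial F_t(\overline{D_0}) = F_t^{\mathrm{out}}(M_0)$ for a.e.\ $t$, together with the $T_{\mathrm{disc}}$ hypothesis, to identify this with $F_t(M_0)$. The ``a.e.'' enters through Hershkovits--White, not through mass-continuity of the Brakke flow. A purely Brakke-flow argument along your lines (cf.\ Lemma~\ref{lemma lahiri's support lemma}) only yields $\mathcal H^n\big((\spt\mu)_t\setminus\spt\mu_t\big)=0$, which is strictly weaker than the set equality required here.
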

\begin{proof}
We first note that 
$$\spt \mu_t = \ov{\dd^*\{u(\cdot, t)>0\}}$$
Since $\cH^{n+1}(\{u(\cdot, t)=0\})=0$ for each $t<T_{\disc}$ by the nonfattening condition and since $\{u(\cdot, t)\geq 0\}$ is a set of finite perimeter, 
$$\ov{\dd^*\{u(\cdot, t)>0\}} = \ov{\dd^*\{u(\cdot, t)\geq 0\}}$$
by \cite[Remark 12.4, 15.2]{Mag12}. Also, by definition, $F_t(\ov{D_0}) = \{u(\cdot, t)\geq 0\}$. Rewriting, we have that
$$\spt \mu_t = \ov{\dd^* F_t(\ov{D_0})}$$
Since the closure of the reduced boundary of a closed set of finite perimeter is the topological boundary of the interior~\cite[Theorem 4.4]{Giu84},
\begin{equation}\label{equation spt mut halfway}
\spt \mu_t = \dd \inte F_t(\ov{D_0})\end{equation}
\begin{lem}\label{lemma halfway spt lemma}
For a.e.\ $t \in [0, T_{\disc})$,
\begin{equation}\label{equation lemma halfway spt lemma}\dd \{u(\cdot, t)>0\} = \dd\{u(\cdot, t)<0\} = F_t(M_0)\end{equation}
\end{lem}
\begin{proof}
    By \cite[Theorem C10]{HershkovitsWhite17}, for a.e.\ $t\in [0, T_{\disc})$, $\dd F_t(\ov{D_0}) = F_t^{\text{out}}(M_0)$. Similarly, for a.e.\ $t \in [0, T)$, $F_t^{\text{in}}(M_0) = \dd F_t(D'_0)$. Using the fact that $t < T_{\disc}$, 
    $$F_t^{\text{out}}(M_0) = F_t^{\text{in}}(M_0) = F_t(M_0)$$ 
   so for a.e.\ $t \in [0,T_{\disc})$,
    $$\dd F_t(\ov{D_0}) = \dd F_t(D'_0) = F_t(M_0)$$
    This means that
    $$\dd\{u(\cdot, t)\geq 0\} = \dd\{u(\cdot, t)\leq 0\} = F_t(M_0)$$
    for a.e.\ $t<T$. Since the boundary of a set is the boundary of its complement,
    \begin{equation}\label{equation boundary u>0 is u=0}
        \dd \{u(\cdot, t)>0\} = \dd\{u(\cdot, t)<0\} = F_t(M_0)
    \end{equation}
\end{proof}

\begin{lem}\label{lemma second halfway spt}
For a.e.\ $t \in [0, T_{\disc})$,
$$\inte F_t(\ov{D_0}) = \{u(\cdot, t)>0\}$$
\end{lem}
\begin{proof}
Let $t$ satisfy (\ref{equation lemma halfway spt lemma}), the conclusion of Lemma \ref{lemma halfway spt lemma}.

Let $x \in \inte F_t(\ov{D_0})$ such that $x \in \{u(\cdot, t)=0\}$. By assumption, there exists a small $r>0$ such that 
$$B_r(x)\subset  F_t(\ov{D_0}) = \{u(\cdot, t)\geq 0\}$$
This implies that 
$$x \notin \dd\{u(\cdot, t)<0\} $$
Thus, $x \in \{u(\cdot, t)=0\}$ yet $x \notin \dd\{u(\cdot, t)<0\}$, which contradicts (\ref{equation lemma halfway spt lemma}). Therefore, if $x \in \inte F_t(\ov{D_0})$, then $x \notin \{u(\cdot, t)=0\}$ and so $x \in \{u(\cdot, t)>0\}$. This proves 
$$\inte F_t(\ov{D_0}) \subseteq \{u(\cdot, t)>0\}$$
On the other hand,
$$\{u(\cdot, t)>0\}\subseteq \inte F_t(\ov{D_0})$$
since $\{u(\cdot, t)>0\}$ is an open set. So, for each $t$ satisfying (\ref{equation lemma halfway spt lemma}), $$\inte F_t(\ov{D_0}) = \{u(\cdot, t)>0\}$$
By Lemma \ref{lemma halfway spt lemma}, we conclude this lemma.
\end{proof}
Combining Lemmas \ref{lemma halfway spt lemma} and \ref{lemma second halfway spt} with (\ref{equation spt mut halfway}), for a.e.\ $t \in [0, T_{\disc})$,
\begin{align*}\label{equation spt mut halfway}
\spt \mu_t &= \dd \inte F_t(\ov{D_0})\\
&= \dd\{u(\cdot, t)>0\}\\
&= F_t(M_0)
\end{align*}
This concludes the proposition.
\end{proof}

Under the General Assumption and Conjecture \ref{conjecture E}, we rule out quasistatic multiplicity one planes as tangent flows of $\mu_t$. This means that the Brakke flow $\mu_t$ is unit regular until at least time $T_{\disc}$. This is somewhat similar to the work of Hershkovits-White in~\cite[Theorem B.9]{HershkovitsWhite17}.

Recall that the enhanced motion, $\ov{\mu}_t$, constructed by elliptic regularization is unit regular (see Theorem \ref{theorem unit density for nonfattening}). However, enhanced motions may be distinct from the boundary motion $\mu_t$ which \textquotedblleft sits underneath" the enhanced motion. In Proposition \ref{proposition no quasistatic mult 1 planes}, we give a general condition under which the boundary motion $\mu_t$ is unit regular. 

\begin{prop}\label{proposition no quasistatic mult 1 planes}
Suppose $(M_0, \mu_t)$ satisfies the General Assumption for $t < T_{\disc}$.

Then, no tangent flow of $\mu_t$ at a spacetime point $X$, with $0<\tau(X)<T_{\disc}$, is a quasistatic multiplicity one plane. 
\end{prop}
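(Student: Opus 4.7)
The plan is to derive a contradiction from the unit regularity of the enhanced motion $\ov{\mu}$ of Theorem~\ref{theorem unit density for nonfattening}, using the identification $\spt\mu=\{u=0\}$ from Proposition~\ref{proposition support is level set flow}. Suppose, for contradiction, that $\mu$ admits a quasistatic multiplicity one planar tangent flow $\nu$ at $X=(x_0,t_0)$ with $0<t_0<T_{\disc}$; by constancy applied to the integer rectifiable time slices supported on the smooth plane $P$, together with Brakke's inequality, $\nu_t=\cH^n\lfloor P$ for $t<0$ and $\nu_t=0$ for $t\ge 0$. My first step would be to upgrade Proposition~\ref{proposition support is level set flow} to the equality $\spt\ov{\mu}=\spt\mu=\{u=0\}$ throughout the slab $[0,T_{\disc})$: the inclusion $\spt\mu\subseteq\spt\ov{\mu}$ is immediate from $\ov{\mu}\ge\mu$, while $\spt\ov{\mu}\subseteq\{u=0\}$ is built into the elliptic-regularization construction of $\ov{\mu}$ in the nonfattening regime.

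Along the rescaling sequence $\la_i\to\infty$ producing $\nu$, Brakke compactness extracts a tangent flow $\ov{\nu}$ of $\ov{\mu}$ at $X$, and upper semicontinuity of supports under Brakke convergence forces $\spt\ov{\nu}=\spt\nu$. Hence $\spt\ov{\nu}_t=P$ for $t<0$ and $\spt\ov{\nu}_t=\emptyset$ for $t\ge 0$; integer rectifiability, the constancy theorem, and self-similarity of tangent flows then give $\ov{\nu}_t=m\,\cH^n\lfloor P$ for $t<0$ and $\ov{\nu}_t=0$ for $t\ge 0$, for some integer $m\ge 1$. In other words, $\ov{\nu}$ is a quasistatic multiplicity-$m$ planar tangent flow of $\ov{\mu}$. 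If $m=1$ this directly contradicts the unit regularity of $\ov{\mu}$.

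To exclude $m\ge 2$, I would establish that $\ov{\mu}_t=\mu_t$ for a.e.\ $t\in[0,T_{\disc})$: both flows share the unit-density initial data $\cH^n\lfloor M_0$, their common support is smooth $\cH^n$-a.e.\ at a.e.\ time by Ilmanen's partial regularity~\cite[Theorems 12.9, 12.11]{Ilmanen94}, and constancy on smooth slices together with Brakke's inequality applied iteratively from $\ov{\mu}_0=\mu_0$ forbids upward jumps in the multiplicity of $\ov{\mu}$ relative to $\mu$. This a.e.\ equality propagates through Huisken's monotonicity integrals to give $\Te_{\ov{\mu}}(X)=\Te_\mu(X)=1$, which is inconsistent with $m\ge 2$. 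Once $\Te_{\ov{\mu}}(X)=1$ is in hand, one may also close the argument cleanly by noting that unit regularity yields a smooth mean curvature flow $\{M_s\}$ near $X$ with $\ov{\mu}_s=\cH^n\lfloor M_s$, after which Lemma~\ref{lemma restarting with unit density} applied with $M_s$, $\ov{\mu}$, and the unit density $\mu$ gives $\mu_s=\cH^n\lfloor M_s$ locally, so the tangent flow of $\mu$ at $X$ is a static multiplicity one plane, contradicting the quasistatic hypothesis. The hard part is the a.e.\ density equality $\ov{\mu}_t=\mu_t$ before $T_{\disc}$, whose proof requires careful iteration of Brakke's inequality at smooth time slices.
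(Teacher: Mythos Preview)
Your overall strategy---leverage the unit regularity of an enhanced motion and then transfer smoothness back to $\mu$ via Lemma~\ref{lemma restarting with unit density}---is sound, and your final paragraph matches the paper's closing argument. The genuine gap is in excluding $m\ge 2$, i.e.\ in establishing $\Te_{\ov\mu}(X)=1$. You propose to derive this from $\ov\mu_t=\mu_t$ for a.e.\ $t$, but the sketch (``constancy on smooth slices together with Brakke's inequality applied iteratively from $\ov\mu_0=\mu_0$'') does not go through: Ilmanen's partial regularity gives $\cH^n$-a.e.\ smoothness of $\spt\mu_t$ at a.e.\ $t$, not a connected smooth patch persisting from time $0$ to a neighborhood of $t_0$, and the multiplicity-monotonicity clause in Lemma~\ref{lemma restarting} needs exactly that. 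There is no evident mechanism to propagate $n(t)=1$ for $\ov\mu$ across singular times, so the ``iteration'' is not a proof, and you rightly flag it as the hard part.

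The paper avoids this global equality entirely. Instead of working with the original enhanced motion $\ov\mu$, it \emph{restarts} elliptic regularization at a carefully chosen time $s_0\in(t_0-\eps^2,t_0)$ with $\spt\mu_{s_0}=F_{s_0}(M_0)$ (available by Proposition~\ref{proposition a.e. t spt mut is Ft}). This produces a new unit regular enhanced motion $\nu_t$ with $\nu_{s_0}=\mu_{s_0}$ exactly, and with $\spt\nu=\spt\mu=\{u=0\}$ on $(s_0,T_{\disc})$. Because Brakke regularity already makes $\spt\mu_s\lfloor B_\eps(x)=M_s$ smooth on $(t_0-\eps^2,t_0)$, Lemma~\ref{lemma restarting} now applies \emph{locally} on $[s_0,t_0)$ with initial data $\nu_{s_0}\lfloor B_\eps(x)=\cH^n\lfloor M_{s_0}$, forcing $\nu_s\lfloor B_\eps(x)=\cH^n\lfloor M_s$ and hence $\Te_\nu(x,t_0)=1$. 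Unit regularity of $\nu$ then gives a smooth extension $\tilde M_s$ past $t_0$, and the argument you already have (Lemma~\ref{lemma restarting with unit density}) transfers this to $\mu$. The restart trick is precisely what replaces your missing global density equality: it manufactures agreement between $\mu$ and an enhanced motion at a time where the support is already smooth near $x$, so only a short local application of Lemma~\ref{lemma restarting} is needed.
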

\begin{proof} Let $T = T_{\disc}$ and let $t_0 \in (0, T)$.

Suppose that $(x,t_0) \in (\spt \mu)_{t_0}$ is a spacetime point with a quasistatic multiplicity one planar tangent flow. Then, by the Brakke regularity theorem, there exists a ball $B_{\eps}(x)$ and a time interval $s \in (t_0 - \eps^2, t_0)$ such that 
\begin{equation}\label{equation support is smooth}\spt \mu_s\lfloor B_{\eps}(x) = M_s\end{equation}
with $M_s \subset B_{\eps}(x)$ a smooth proper flow of embedded connected hypersurfaces with uniformly bounded curvature over $(t_0 - \eps^2, t_0)$. See \cite[Proposition 3.7]{BernsteinWang16} for a version of the Brakke regularity theorem that implies this statement. Since $\mu_s$ is unit density, (\ref{equation support is smooth}) implies by Lemma \ref{lemma restarting with unit density} that
$$\mu_s \lfloor B_{\eps}(x) = \cH^n \lfloor M_s$$
for $s \in (t_0 - \eps^2, t_0)$.

By Proposition \ref{proposition a.e. t spt mut is Ft}, we may choose $s_0 \in (t_0 - \eps^2, t_0)$ such that 
$$\ov{\dd^*\{u(\cdot, s_0)>0\}} = \spt \mu_{s_0} = F_{s_0}(M_0)$$
Let $\nu_t$ be the unit regular Brakke flow constructed via elliptic regularization with initial condition $F_{s_0}(M_0)$ (see Theorem \ref{theorem unit density for nonfattening}) so that $\nu_{s_0} = \mu_{s_0}$. By Theorem \ref{theorem unit density for nonfattening}, the Brakke flow $\nu_t$ exists for $t\geq s_0$, and 
$$\spt \mu \cap \{s_0 < t< T\} \subseteq \spt \nu \cap \{s_0 < t< T\}$$
by the fact that $\mu_t(\phi) \leq \nu_t(\phi)$ for each $\phi \in C^2_c(\bR^{n+1}, \bR_{\geq 0})$.

Proposition \ref{proposition support is level set flow} says that
$$\spt \mu \cap \{s_0 < t< T\} = \{u=0\}\cap \{s_0 < t< T\}$$
and since codimension one Brakke flows are contained in the level set flow~\cite[Chp. 10]{Ilmanen94},
$$\spt \nu \cap \{s_0 < t< T\} \subseteq \{u=0\}\cap \{s_0 < t< T\}$$
This implies that
\begin{equation}\label{equation equality of supports of mu nu}
\{u=0\}\cap \{s_0 < t< T\} = \spt \mu \cap \{s_0 < t< T\} = \spt \nu \cap \{s_0 < t< T\}
\end{equation}
By (\ref{equation equality of supports of mu nu}), we have that for $t \in (s_0, T)$, $F_t(M_0) = (\spt \nu)_t$. By Theorem \ref{theorem unit density for nonfattening}, $\nu_t \geq \mu_t$ and so $\spt \mu_t \subseteq \spt \nu_t$ for each $t \geq s_0$. So, for $t \in (s_0, T)$, 
$$\spt \mu_t \subseteq \spt \nu_t \subseteq (\spt \nu)_t = F_t(M_0)$$
By Proposition \ref{proposition a.e. t spt mut is Ft}, $\spt \mu_t = F_t(M_0)$ for a.e.\ $t \in (0, T)$ which implies that $\spt \mu_t = \spt \nu_t$ for a.e.\ $t \in (s_0, T)$. Thus, by (\ref{equation support is smooth}), 
\begin{equation}\label{equation support of elliptic reg coincides}
M_s = \spt \mu_s \lfloor B_{\eps}(x) = \spt \nu_s \lfloor B_{\eps}(x)
\end{equation}
for a.e.\ $s \in (t_0 - \eps^2, t_0)$. Applying Lemma \ref{lemma restarting} to $\nu_s\lfloor B_{\eps}(x)$ and $\mu_s\lfloor B_{\eps}(x) = \cH^{n}\lfloor M_s$ over the time interval $[s_0, t_0)$ and using the fact that $\mu_t \leq \nu_t$ and $\mu_{s_0} = \nu_{s_0}$, we get that 
$$\nu_s \lfloor B_{\eps}(x) = \cH^{n} \lfloor M_s$$ for $s \in [s_0, t_0)$. 

Since $\Te(x,t_0)=1$ for the Brakke flow $\cH^{n} \lfloor M_s$, we get that $\Te(x,t_0)=1$ for the flow $\nu_t$ as well. 

By Theorem \ref{theorem unit density for nonfattening}, $\nu_t$ is unit regular and so $\nu_t$ has a static multiplicity one planar tangent flow at $(x,t_0)$. By Remark \ref{remark mult 1 static plane is fully smooth}, $(x,t_0)$ is a fully smooth point for $\nu_t$. Therefore, for some $\de>0$, $\nu_s \lfloor B_{\de}(x) = \cH^n \lfloor \tilde{M}_s$ for $s \in (t_0 - \de^2, t_0+\de^2)$, where $\tilde{M}_s$ is a smooth proper flow of smooth embedded connected hypersurfaces with uniformly bounded curvature in $B_{\de}(x)$ defined for $(t_0 - \de^2, t_0+\de^2)$ and $\tilde{M}_s = M_s$ for $s \in (t_0 - \de^2, t_0)$.

By (\ref{equation equality of supports of mu nu}),
\begin{equation}\label{equation equality of supports of mu nu 2}\bigcup_{s \in (t_0 - \de^2, t_0 + \de^2)}\tilde{M}_s = \{u=0\}\cap (B_{\de}(x) \times (t_0 - \de^2, t_0 + \de^2))\end{equation}
By Proposition \ref{proposition a.e. t spt mut is Ft}, $\spt \mu_t = F_t(M_0)$ for a.e.\ $t \in [0,T)$, so (\ref{equation equality of supports of mu nu}) and (\ref{equation equality of supports of mu nu 2}) imply
$$\spt \mu_s\lfloor B_{\de}(x)= \tilde{M}_s$$
for a.e.\ $s \in (t_0 - \de^2, t_0 + \de^2)$.

Applying Lemma \ref{lemma restarting with unit density} to $\mu_s\lfloor B_{\de}(x)$ and $\cH^{n}\lfloor \tilde{M}_s$ for $s \in (t_0 - \de^2, t_0 + \de^2)$ and using the fact that $\mu_s$ is unit density,
$$\mu_s\lfloor B_{\de}(x) = \cH^{n} \lfloor \tilde{M}_s$$
for $s \in (t_0 - \de^2, t_0 + \de^2)$.
Thus, $(x,t_0)$ is a fully smooth point for the Brakke flow $\mu_{t}$. That is, there is a static multiplicity one planar tangent flow of $\mu$ at $(x,t_0)$. By the uniqueness of static multiplicity one planar tangent flows, if one tangent flow is a static multiplicity one plane, then they all are. This contradicts the assumption that there is a quasistatic multiplicity one planar tangent flow at $(x,t_0)$. 

Since $t_0$ was arbitrary, no tangent flows are quasistatic multiplicity one planes for $t \in (0, T_{\disc})$.
\end{proof}

We will now relate blow-ups of level set flow to blow-ups of Brakke flow under a $T_{\disc}$ assumption. Recall the definitions from Section \ref{subsection preliminaries of level set flow}.

\begin{lem}\label{lemma lsf blowups are bf blowups}
Let $\cU$ be the spacetime track of the outer flow for a level set flow with smooth closed initial condition $M_0$. Let $X$ be a backwardly singular spacetime point of the outer flow, with $0< \tau(X) < T_{\disc}$, such that there exists a subsequence $\la_i \to \infty$ so that $\cU_{X,\la_i}$ converges smoothly with multiplicity one to a round shrinking solid cylinder $B^{n+1-k} \times \bR^k$.

If $\mu_t$ is the unit density Brakke flow associated to the level set flow for $0\leq t<T_{\disc}$, then each tangent flow of $\mu_t$ at $X$ is a multiplicity one round shrinking cylinder $S^{n-k} \times \bR^k$.
\end{lem}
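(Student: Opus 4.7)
The plan is to combine the identification of $\spt \mu$ with the level set flow (Proposition \ref{proposition support is level set flow}) with the hypothesized smooth convergence of $\cU_{X,\lambda_i}$ and Brakke compactness to produce one multiplicity one cylindrical tangent flow, and then promote this to \emph{every} tangent flow via uniqueness of cylindrical tangent flows.

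First, since $\tau(X) < T_{\disc}$, Proposition \ref{proposition support is level set flow} identifies $\spt \mu$ with $\{u=0\}$ locally around $X$. The $T_{\disc}$ condition forces the inner flow, outer flow, and level set flow to coincide on this region, so $\spt \mu$ agrees with $\partial \cU$ there. After parabolic rescaling at $X$, the support of $\mu^{\lambda,X}$ therefore coincides with $\partial \cU_{X,\lambda}$ on any fixed compact spacetime region once $\lambda$ is large enough. Under the General Assumption, each slice of $\mu^{\lambda,X}$ is moreover the $\cH^n$ measure on the reduced boundary $\partial^*\{u(\cdot,t)>0\}$ rescaled accordingly, i.e.\ the unit density surface measure on the smooth part of $\partial \cU_{X,\lambda}$.

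Second, along the subsequence $\lambda_i \to \infty$ with smooth multiplicity one convergence $\cU_{X,\lambda_i} \to B^{n+1-k} \times \bR^k$, the boundaries $\partial \cU_{X,\lambda_i}$ converge smoothly as graphs to the shrinking cylindrical surface $M_t = \sqrt{-2(n-k)t}\, S^{n-k} \times \bR^k$ for $t<0$. Applying Brakke compactness, extract a further subsequence along which $\mu^{\lambda_i,X} \to \mu^\infty$ as integral Brakke flows. The smooth graph convergence of the supports, together with the identification of $\mu^{\lambda_i,X}_t$ with the $\cH^n$ measure on $\partial \cU_{X,\lambda_i}$ at a.e.\ time, yields $\mu^\infty_t = \cH^n \lfloor M_t$ for $t<0$ (this last step can be made rigorous either directly from the graph convergence or, if preferred, by applying Lemma \ref{lemma restarting with unit density} with $\nu_t = \cH^n \lfloor M_t$ and unit density $\mu^\infty$). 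Thus $\mu^\infty$ is the multiplicity one round shrinking cylinder.

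Third, the construction above yields one cylindrical tangent flow, but the statement asks for \emph{each} tangent flow to have this form. Here the plan is to invoke the Colding-Ilmanen-Minicozzi and Colding-Minicozzi uniqueness of cylindrical tangent flows: once $\mu_t$ has a single multiplicity one cylindrical tangent flow at $X$, every tangent flow at $X$ is the same shrinking cylinder $S^{n-k} \times \bR^k$ (with a uniquely determined axis). The main obstacle is precisely this last step, since the hypothesis only supplies smooth multiplicity one convergence along the specific subsequence $\lambda_i$; a priori another sequence $\tilde\lambda_j \to \infty$ could produce a different Brakke limit, and ruling this out requires the uniqueness theorem rather than just the compactness arguments of the first two steps. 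Everything else is bookkeeping about identifying supports and pushing the unit density structure through the Brakke limit.
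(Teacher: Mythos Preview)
Your proposal is correct and follows essentially the same route as the paper: identify $\spt\mu$ with $\partial\cU$ via Proposition~\ref{proposition support is level set flow}, pass the smooth multiplicity one convergence of $\partial\cU_{X,\lambda_i}$ through Brakke compactness and unit density to obtain one multiplicity one cylindrical tangent flow, and then invoke Colding--Ilmanen--Minicozzi and Colding--Minicozzi uniqueness to upgrade to every tangent flow. The only point you should make explicit is that the uniqueness theorem requires $\mu_t$ to be unit regular with bounded Gaussian density ratios; the paper checks the former via Proposition~\ref{proposition no quasistatic mult 1 planes} and the latter via Theorem~\ref{theorem huisken monotonicity}.
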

\begin{proof}
Let $\mu_t$ be the canonical unit density Brakke flow associated to the level set flow of $M_0$ for $0\leq t< T_{\disc}$. %The definition of $T_{\disc}$ (see also (\ref{equation level set flow of domain})) implies that 
%$$\dd \cU \cap \{0 \leq t< T_{\disc}\} = \{u=0\}\cap \{0 \leq t< T_{\disc}\}$$
By Proposition \ref{proposition support is level set flow}, 
\begin{equation}\label{equation support coincides}\dd\cU \cap \{0\leq t<T_{\disc}\} = \spt \mu \cap \{0\leq t< T_{\disc}\}\end{equation}
Recall from Section \ref{section singularities mean curvature flow} that $\mu^{X, \la_i}$ denotes the parabolic rescaling of the Brakke flow $\mu_t$ around $X$ by $\la_i$. So, in each time slab $\{a<t<b\}$, (\ref{equation support coincides}) implies $\dd\cU_{X, \la_i}=\spt \mu^{X,\la_i}$ for large $\la_i$. By the Brakke compactness theorem, there exists a subsequence $\la_j \to \infty$ such that $\mu^{X, \la_j}$ converges to a tangent flow $\mu'$. In each compact set, the supports of converging Brakke flows converge as sets to the support of the limit, due to the upper semi-continuity of the Gaussian density. So, in each compact set, $\spt \mu'$ is the limit of $\spt \mu^{X, \la_j}$ as sets. By assumption, $\cU_{X, \la_j}$ converge as sets to a shrinking $B^{n+1-k}\times \bR^k$ and $\dd \cU_{X, \la_j}$ converges smoothly with multiplicity one to a shrinking $S^{n-k}\times \bR^k$. Since $\dd\cU_{X, \la_j} = \spt \mu^{X, \la_j}$ in each time slab $\{a<t<b\}$, we find that $\spt \mu^{X, \la_j}$ converges smoothly with multiplicity one to a shrinking cylinder $S^{n-k}\times \bR^k$. Any limit of $\spt \mu^{X, \la_j}$ as sets must be a dense subset of the shrinking cylinder $S^{n-k}\times \bR^k$. Since $\spt \mu'$ is closed and is the limit of $\spt \mu^{X, \la_j}$ as sets, $\spt \mu'$ is the shrinking cylinder $S^{n-k}\times \bR^k$. 
%This follows because the limit must be a dense subset of the shrinking cylinder.

Since $\mu_t$ is unit density and $\dd \cU_{X, \la_j} = \spt \mu^{X, \la_j}$ converges smoothly with multiplicity one to the shrinking $S^{n-k}\times \bR^k$, the tangent flow $\mu'_t$ is a multiplicity one shrinking $S^{n-k}\times \bR^k$. If one tangent flow is a multiplicity one cylinder, then all tangent flows are cylindrical by work of Colding-Ilmanen-Minicozzi~\cite{ColdingIlmanenMinicozzi16}, and in fact the axis of the cylinder is unique by work of Colding-Minicozzi~\cite{ColMin15}. This holds for unit regular Brakke flows with bounded Gaussian density ratios. Unit regularity is ensured by Proposition \ref{proposition no quasistatic mult 1 planes}, and Gaussian density ratios are bounded by the fact that area ratios are bounded (see Theorem \ref{theorem huisken monotonicity}). We find that each tangent flow of $\mu_t$ at $X$ is a multiplicity one shrinking cylinder $S^{n-k}\times \bR^k$.
\end{proof}

We conclude this subsection by showing that for nonfattening level set flows with only cylindrical singularities, the associated unit density Brakke flow has only cylindrical tangent flows. The point of the next proposition is that for $t<T_{\disc}$, backwardly regular points of the level set flow correspond to static multiplicity one planar tangent flows. In particular, backwardly regular points do not correspond to higher multiplicity planar tangent flows due to the unit density of $\mu_t$. In this proposition, we will use the same notation as Lemma \ref{lemma lsf blowups are bf blowups}. 

\begin{prop}\label{proposition no Tmult for lsf with cylindrical singularities}
Suppose $(M_0, \mu_t)$ satisfies the General Assumption for $t< T_{\disc}$, and suppose that the level set flow of $M_0$ has only cylindrical singularities for $t<T_{\disc}$. That is, for each backwardly singular spacetime point $X$ of the outer flow, with $0<\tau(X)<T_{\disc}$, $\cU_{X,\la_i}$ converges smoothly with multiplicity one to a round shrinking solid cylinder for some $\la_i \to \infty$.

Then, each nonempty tangent flow of $\mu_t$, for $0<t<T_{\disc}$, is a static multiplicity one plane or a multiplicity one round shrinking cylinder.
\end{prop}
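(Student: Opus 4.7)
The plan is to do a case analysis on each spacetime point $X = (x_0, t_0) \in \spt \mu$ with $0 < t_0 < T_{\disc}$, splitting according to whether $X$ is backwardly regular or backwardly singular for the outer flow of $M_0$. Every such $X$ lies in $F_{t_0}(M_0)$ since the level set flow, outer flow, and inner flow coincide for $t < T_{\disc}$, and $\spt \mu \cap \{0 \leq t < T_{\disc}\}$ equals the spacetime track of the level set flow by Proposition \ref{proposition support is level set flow}. Thus the dichotomy of backward regularity applies to every $X$ in the support of $\mu$.

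When $X$ is backwardly singular, the hypothesis gives a sequence $\la_i \to \infty$ so that $\cU_{X,\la_i}$ converges smoothly with multiplicity one to a round shrinking solid cylinder. Lemma \ref{lemma lsf blowups are bf blowups} then immediately implies that each tangent flow of $\mu_t$ at $X$ is a multiplicity one round shrinking cylinder $S^{n-k} \times \bR^k$, which is one of the two allowed outcomes.

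When $X$ is backwardly regular, there exists $\eps > 0$ so that $M_s := F_s^{\text{out}}(M_0) \cap B_\eps(x_0)$ is a smooth embedded mean curvature flow for $s \in [t_0 - \eps^2, t_0]$, and by the $T_{\disc}$ assumption this coincides with $F_s(M_0) \cap B_\eps(x_0)$. Combining Proposition \ref{proposition support is level set flow} with Proposition \ref{proposition a.e. t spt mut is Ft}, we have $\spt \mu_s \lfloor B_\eps(x_0) = M_s$ for a.e.\ $s \in [t_0 - \eps^2, t_0]$. Setting $\nu_s := \cH^n \lfloor M_s$, the hypotheses of Lemma \ref{lemma restarting with unit density} are satisfied on $B_\eps(x_0) \times [t_0 - \eps^2, t_0]$ (using that $\mu_s$ is unit density by the General Assumption), so $\mu_s \lfloor B_\eps(x_0) = \cH^n \lfloor M_s$ for every $s$ in that interval. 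In particular $\Te(x_0, t_0) = 1$, so by Huisken monotonicity and the classification of self-shrinkers with density one, every tangent flow at $X$ is a multiplicity one plane, either static or quasistatic.

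The remaining step — which I expect to be the only mildly delicate one — is to exclude the quasistatic possibility in the backwardly regular case. For this we invoke Proposition \ref{proposition no quasistatic mult 1 planes}, which is precisely tailored to this situation and rules out quasistatic multiplicity one planar tangent flows at any spacetime point with $0 < \tau(X) < T_{\disc}$ under the General Assumption. Hence every nonempty tangent flow at a backwardly regular $X$ is a static multiplicity one plane, and the proposition follows by combining the two cases.
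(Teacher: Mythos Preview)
Your proposal is correct and follows essentially the same approach as the paper's proof: case analysis on backwardly singular versus backwardly regular points of the outer flow, invoking Lemma \ref{lemma lsf blowups are bf blowups} in the singular case and Lemma \ref{lemma restarting with unit density} together with Proposition \ref{proposition no quasistatic mult 1 planes} in the regular case. The only cosmetic difference is that the paper places the identification of $\spt \mu$ with the outer flow at the end rather than the beginning, but the logic is identical.
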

\begin{proof}
If $X$, with $0<\tau(X) < T_{\disc}$, is a backwardly singular spacetime point of the outer flow of $M_0$, then Lemma \ref{lemma lsf blowups are bf blowups} implies that $\mu_t$ has only multiplicity one round shrinking cylindrical tangent flows at $X$.

Suppose that $X = (x,t)$, $t<T_{\disc}$, is not a backwardly singular point of the outer flow. Then, $X$ is a backwardly regular point of the outer flow. So, there is a smooth mean curvature flow of smooth embedded connected hypersurfaces $M_s$ defined in $B_r(x)$ for $s \in (t - \eps^2, t]$ such that $F_s^{\text{out}}(M_0) \cap B_r(x) = M_s$ for $s \in (t - \eps^2, t]$. Since $t<T_{\disc}$, the level set flow of $M_0$ coincides with the outer flow until time $T_{\disc}$, so $F_s(M_0) \cap B_r(x) = M_s$ for $s \in (t - \eps^2, t]$.  By Proposition \ref{proposition a.e. t spt mut is Ft}, for a.e.\ $s\in (t - \eps^2, t]$, $\spt \mu_s\lfloor B_r(x) = M_s$. Since the flow $\mu_t$ is unit density, Lemma \ref{lemma restarting with unit density} implies that $\mu_s\lfloor B_r(x) = \cH^n \lfloor M_s$ for $s \in (t- \eps^2, t]$. Then, since $M_s$ is a smooth flow of embedded hypersurfaces for $s \in (t - \eps^2, t]$, we have that $\Te(x,t) = 1$. Since $t<T_{\disc}$, we know that $\mu_t$ has no quasistatic multiplicity one planes as tangent flows by Proposition \ref{proposition no quasistatic mult 1 planes}. Thus, $\Te(x,t) = 1$ implies that the tangent flow of $\mu_t$ at $X$ is a static multiplicity one plane. So the only tangent flows of $\mu_t$ at any backwardly regular point of the outer flow is a static multiplicity one plane. 

Since the outer flow of $M_0$ coincides with the level set flow of $M_0$ for $t<T_{\disc}$, $\spt \mu$ coincides with the outer flow for $t< T_{\disc}$ by Proposition \ref{proposition support is level set flow}. We have classified the tangent flows for all spacetime points $X$ in the outer flow, so we have found all tangent flows of $\mu_t$ at spacetime points of $\spt \mu$. This concludes the proposition. 
\end{proof}

\subsection{Countable Singular Times}\hfill\\
\vspace{-.2in}

In this section, we will analyze the consequences of the assumption that there is a countable set of singular times and provide natural examples for which this holds.

A Brakke flow $\mu_t$ has a \textit{countable set of singular times} for $t<T$ if there is a cocountable set $\cI \subseteq [0, T)$ such that for each $t_0 \in \cI$, $\Te(x,t_0)\leq 1$ for each $x \in \bR^{n+k}$. Note that if $\Te(x,t_0)<1$ and $t_0>0$, then $(x,t_0) \notin (\spt \mu)_{t_0}$.

A level set flow $u$ has an \textit{open cocountable set of regular times} for $t<T$ if there is a relatively open cocountable set $\cI \subseteq [0, T)$ such that for each $t_0 \in \cI$, $F_{t_0}(M_0) = \{u(\cdot, t_0)=0\}$ is a smooth embedded, possibly disconnected, hypersurface in $\bR^{n+1}$.

\begin{prop}\label{proposition smoothness a.e. time}
Suppose $M_0$ is compact, and the level set flow of $M_0$ has an open cocountable set of regular times for $t<T$.

Then, for all but countably many $t \in [0, T)$,
\begin{enumerate}
    \item There are finitely many connected components of $F_t(M_0)$,
    \item There exists an interval $(t_1, t_2)$ containing $t$ such that $\{F_s(M_0)\}_{s \in (t_1, t_2)}$ is a smooth flow of closed, properly embedded hypersurfaces.
\end{enumerate}
\end{prop}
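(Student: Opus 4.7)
The plan is to write the exceptional set of times as a union of two countable pieces: the countable $\cI^c$ allowed by hypothesis, and a countable set of ``endpoint'' times where smoothness in time degenerates.

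For statement (1), I would argue via compactness. Since $M_0$ is compact, the avoidance principle (comparing with a large shrinking sphere enclosing $M_0$) forces $F_t(M_0)$ to lie inside a shrinking ball for every $t \in [0,T)$. So for any $t \in \cI$, $F_t(M_0)$ is a smooth compact embedded hypersurface, and hence has only finitely many connected components. Thus (1) already holds on the full cocountable set $\cI$.

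For statement (2), let $U \subseteq [0,T)$ denote the set of $t$ for which there exists an open interval $(t_1,t_2) \ni t$ such that $\{F_s(M_0)\}_{s \in (t_1,t_2)}$ is a smooth mean curvature flow of closed embedded hypersurfaces. By construction $U$ is open in $[0,T)$, so it decomposes as a countable disjoint union of intervals $\bigsqcup_n I_n$. Let $V := [0,T) \setminus U$ and split $V = (V \cap \cI^c) \sqcup (V \cap \cI)$; the first piece lies inside $\cI^c$ and so is countable. For $t \in V \cap \cI$, part (1) tells us that $F_t(M_0)$ is a smooth compact embedded hypersurface, so classical short-time existence for smooth MCF produces a smooth flow $\tilde F_s$ on some $[t, t+\varepsilon_t)$ starting from $F_t(M_0)$. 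Combining the semigroup property of the level set flow with the fact that smooth MCF is its own level set flow as long as it remains smooth (Evans--Spruck), one gets $\tilde F_s = F_s(M_0)$ on $[t,t+\varepsilon_t)$, so $(t, t+\varepsilon_t) \subseteq U$. Therefore $t$ must be the left endpoint of some component $I_n$, and the resulting injection $t \mapsto I_n$ exhibits $V \cap \cI$ as a subset of a countable set. Hence $V$ is countable, so both conclusions hold on the cocountable set $\cI \cap U$.

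The only subtle point I anticipate is the coincidence of the restarted smooth MCF with the level set flow on $[t, t+\varepsilon_t)$, but this follows from standard uniqueness results for level set flow. The edge case where $F_t(M_0)$ is empty past an extinction time is harmless, as the empty flow is trivially a smooth mean curvature flow.
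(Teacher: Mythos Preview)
Your proof is correct and a bit more streamlined than the paper's. For (1), the paper argues by contradiction: assuming infinitely many components, it picks one point in each, extracts a convergent subsequence by compactness, and derives a contradiction with proper embeddedness. Your argument---bounded by avoidance, closed, hence compact as a manifold, hence finitely many components---reaches the same conclusion more directly.

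For (2), the structures differ. The paper works \emph{forward} from each $t_0 \in \cI$: it first invokes Evans--Spruck's instantaneous extinction theorem to discard any component that is not the full boundary of a domain (a case which, in $\bR^{n+1}$, is in fact vacuous by Jordan--Brouwer), then applies short-time existence to the remaining closed components, and observes that the resulting smooth flow persists until the first time outside $\cI$, namely $s^* = \inf\{s>t_0: s\notin\cI\}$. Two-sidedness is then obtained by using openness of $\cI$ to pick an earlier starting time $s_* < t_0$ in the same component of $\cI$. Your approach instead defines the good set $U$ globally and counts its complement: $V\cap\cI^c$ is countable by hypothesis, and each $t\in V\cap\cI$ is forced to be the left endpoint of a component of $U$ by one-sided short-time existence. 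The left-endpoint injection is a clean device that the paper does not use. Both arguments rest on the same core ingredients---avoidance for compactness, short-time existence for smooth MCF, and the coincidence of smooth MCF with the level set flow---so the difference is organizational rather than mathematical. Your version avoids the Evans--Spruck detour entirely.
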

\begin{rmk}
Although the number of connected components of $F_t(M_0)$ is finite for all but countably many times, there is no control on this number. 
\end{rmk}
\begin{proof}\hfill\\
\noindent\textbf{Proof of (1):}

Let $\cI\subset [0, T)$ be the relatively open cocountable set such that for each $t_0 \in \cI$, $F_{t_0}(M_0)$ is a smooth embedded hypersurface. Let $t_0 \in \cI$. Since $F_{t_0}(M_0)$ is a closed set in $\bR^{n+1}$, $F_{t_0}(M_0)$ is a an embedding with closed image. This implies that $F_{t_0}(M_0)$ is a properly embedded hypersurface.

Suppose there exist infinitely many connected components of $F_{t_0}(M_0)$. Let $\{N_i\}_{i \geq 1}$ be a sequence of distinct connected components of $F_{t_0}(M_0)$. Let $\{x_i\}_{i \geq 1}$ be a sequence such that for each $i \geq 1$, $x_i \in N_i$. 

Since $M_0$ is a bounded set, $F_{t_0}(M_0)$ is bounded by the avoidance principle for level set flows. Also, $F_{t_0}(M_0)$ is a closed subset of $\bR^{n+1}$ for each $t_0$. So, $F_{t_0}(M_0)$ is compact, and some subsequence $x_j$ converges to $x \in F_{t_0}(M_0)$. Let $B_r(x)$ be a compact ball. Since $F_{t_0}(M_0)$ is a properly embedded hypersurface, this means that $B_r(x) \cap F_{t_0}(M_0)$ is compact in the intrinsic distance. However, each $x_i$ belongs to a distinct connected component, so $x_i$ does not converge to $x$ in the intrinsic sense. This contradicts the fact that $F_{t_0}(M_0)$ is properly embedded. 

Therefore, for each $t \in \cI$, there are finitely many connected components of $F_t(M_0)$.

\noindent \textbf{Proof of (2):}

Let $t_0 \in \cI$, where $\cI$ is the relatively open cocountable set of regular times in $[0, T)$.

By (1), $F_{t_0}(M_0)$ is the union of finitely many connected components $\{N_1, \dots, N_m\}$. By assumption, each $N_i$ is a smooth embedded hypersurface.

Let $N_i$ be a connected component of $F_{t_0}(M_0)$. Let $U \subset \bR^{n+1}$ be a smooth connected bounded open set, and suppose $N_i$ is a proper subset of $\dd U$. Then, by Evans-Spruck's instantaneous extinction theorem \cite[Theorem 8.1]{EvansSpruck91}, $F_s(N_i) = \emptyset$ for all $s > t_0$.

On the other hand, suppose $N_i = \dd U$. Then, $N_i$ is a closed properly embedded hypersurface, and there exists $s_i>t_0$ such that $F_s(N_i)$ is the smooth flow of $N_i$ for $s \in [t_0, s_i)$.

Since there are only finitely many $N_i$, there exists $s^* > t_0$ such that for each $s \in (t_0, s^*)$, each connected component of $F_s(M_0)$ is a smooth closed properly embedded hypersurface, and $F_s(M_0)$ is a smooth flow for $s \in (t_0, s^*)$. In fact, $F_s(M_0)$ will flow smoothly until $F_s(M_0)$ is not a smooth embedded hypersurface. So,
\begin{equation}\label{equation s*}
s^* = \inf\{s>t_0\,|\,s \notin \cI\}\end{equation}

Since $\cI \subseteq [0,T)$ is relatively open, $\cI$ is the countable union of disjoint relatively open intervals. Let $t_1 \in \cI$, and let $I_i$ be the interval such that $t_1 \in I_i$. Suppose without loss of generality that $t_1 \neq 0$. Then, there exists $s_* \in I_i$ such that $s_* < t_1$. Applying (\ref{equation s*}) to $t_0 = s_*$ and using the fact that $I_i$ is a relatively open interval, there exists $s^*>t_1$ such that $F_s(M_0)$ is a smooth flow of closed properly embedded hypersurfaces for $s \in (s_*, s^*)$. Since $t_1$ is any point in $\cI \cap \bR_{\geq 0}$, which is a cocountable set in $[0,T)$, and $t_1 \in (s_*, s^*)$ where $F_s(M_0)$ is a smooth flow for $(s_*, s^*)$, we conclude this proposition.

\end{proof}

\begin{cor}\label{corollary main assumption implies vanishing singular set each time}
Suppose that one of the following assumptions holds:
\begin{itemize}
    \item $(M_0, \mu_t)$ satisfies the General Assumption for $t<T$, the level set flow of $M_0$ has an open cocountable set of regular times for $t<T$, and $T=T_{\disc}$,
    \item $\mu_t$ is an integral $n$-dimensional Brakke flow in $\bR^{n+k}$ which is unit regular, has bounded area ratios, and has a countable set of singular times for $t<T$.
\end{itemize}
Then, for all but countably many $t \in [0, T)$,
$$(\sing^+\hspace{-.04in}\mu)_t = \emptyset$$
\end{cor}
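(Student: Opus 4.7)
The plan is to treat the two bulleted hypotheses separately and, in each case, exhibit a cocountable subset of times $t \in [0,T)$ for which every point of the time-$t$ slice of the spacetime support admits a static multiplicity-one planar tangent flow. By Remark \ref{remark mult 1 static plane is fully smooth}, such points are fully smooth, which is exactly the statement $(\sing^+\hspace{-.04in}\mu)_t = \emptyset$.

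For the Brakke-flow hypothesis the argument is essentially definitional. By assumption there is a cocountable set $\cI \subseteq [0,T)$ with $\Te(x,t_0) \leq 1$ for every $x \in \bR^{n+k}$ and every $t_0 \in \cI$. Fix $t_0 \in \cI \cap (0,T)$ and $(x,t_0) \in \spt \mu$. From the characterization of spacetime support recalled in Section \ref{section singularities mean curvature flow}, $\Te(x,t_0) \geq 1$, so $\Te(x,t_0) = 1$. Unit regularity of $\mu_t$ then upgrades $(x,t_0)$ to a fully smooth point, yielding $(\sing^+\hspace{-.04in}\mu)_{t_0} = \emptyset$ for every $t_0 \in \cI \cap (0,T)$, which is a cocountable subset of $[0,T)$.

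For the General-Assumption case the crux is to upgrade the almost-everywhere support identification $\spt \mu_s = F_s(M_0)$ from Proposition \ref{proposition a.e. t spt mut is Ft} into a pointwise identification of $\mu_s$ with a smooth surface measure on a full spacetime neighborhood. Apply Proposition \ref{proposition smoothness a.e. time} to obtain, for all but countably many $t \in [0,T)$, an open interval $(t_1,t_2) \ni t$ on which $F_s(M_0)$ is a smooth flow of closed properly embedded hypersurfaces. Fix such a $t$ and any $x \in (\spt \mu)_t$; by Proposition \ref{proposition support is level set flow}, $x \in F_t(M_0)$. Choose $r>0$ so small that $B_r(x)$ meets only the connected component of $F_t(M_0)$ through $x$, and shrink $(t_1,t_2)$ if necessary so that $M_s := F_s(M_0) \cap B_r(x)$ is a smooth connected flow with bounded curvature in $B_r(x)$; set $\nu_s := \cH^n \lfloor M_s$. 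Proposition \ref{proposition a.e. t spt mut is Ft} yields $\spt \mu_s \lfloor B_r(x) = M_s$ for a.e.\ $s \in (t_1,t_2)$, and the unit density of $\mu_s$ built into the General Assumption lets Lemma \ref{lemma restarting with unit density} promote this to $\mu_s \lfloor B_r(x) = \cH^n \lfloor M_s$ for every $s$ in the interval. Hence $(x,t)$ admits a static multiplicity-one planar tangent flow and is fully smooth by Remark \ref{remark mult 1 static plane is fully smooth}, so $(\sing^+\hspace{-.04in}\mu)_t = \emptyset$. The main obstacle is precisely this upgrade step: without Lemma \ref{lemma restarting with unit density} one would know smoothness only on a measure-theoretically full set of times, whereas the definition of a fully smooth point requires pointwise coincidence with a smooth flow on a spacetime neighborhood.
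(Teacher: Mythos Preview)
Your proof is correct and follows essentially the same approach as the paper's own proof: both cases proceed by exhibiting a cocountable set of times at which every point of $(\spt\mu)_t$ has a static multiplicity-one planar tangent flow, invoking Proposition~\ref{proposition smoothness a.e. time}, Proposition~\ref{proposition a.e. t spt mut is Ft}, and Lemma~\ref{lemma restarting with unit density} in the level-set-flow case, and unit regularity directly in the Brakke-flow case. The only cosmetic difference is that you localize to a ball $B_r(x)$ around each point, whereas the paper applies Lemma~\ref{lemma restarting with unit density} componentwise to obtain the global identification $\mu_s = \cH^n\lfloor F_s(M_0)$ on $(t_1,t_2)$ in one stroke.
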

\begin{proof}\hfill\\
\noindent \textbf{Level set flow assumption:}

Under the assumption on the level set flow, Proposition \ref{proposition smoothness a.e. time} says that for all but countably many $t \in [0, T_{\disc})$, there exists $(t_1, t_2)\ni t$ such that $F_s(M_0)$ is a smooth flow of closed embedded hypersurfaces for $s \in (t_1, t_2)$. By Proposition \ref{proposition a.e. t spt mut is Ft}, $\spt \mu_s = F_s(M_0)$ for a.e.\ $s<T_{\disc}$. Since $\mu_t$ is unit density, Lemma \ref{lemma restarting with unit density} applied to each connected component implies that $\mu_s = \cH^n \lfloor F_s(M_0)$ for $s \in (t_1, t_2)$. Then, since $\mu_s$ is a unit density smooth flow around time $t$, each tangent flow at $X \in (\spt \mu)_t$ is a static multiplicity one plane. By Remark \ref{remark mult 1 static plane is fully smooth}, each $X \in (\spt \mu)_t$ is a fully smooth point so $(\sing^+ \hspace{-.04in} \mu)_t = \emptyset$. Since $t$ is chosen from a cocountable set, $(\sing^+ \hspace{-.04in} \mu)_t = \emptyset$ for all but countably many $t \in [0, T_{\disc})$. This proves the corollary under the first assumption.

\noindent \textbf{Brakke flow assumption:}

Under the assumption on the Brakke flow $\mu_t$, for all but countably many $t \in [0, T_{\disc})$, $\Te(x,t)\leq 1$ for each $x \in \bR^{n+k}$. Thus, if $(x,t) \in (\spt \mu)_t$, then $\Te(x,t) = 1$. Since $\mu_t$ is unit regular, each tangent flow at $(x,t)$, is a static multiplicity one planar tangent flow.  By Remark \ref{remark mult 1 static plane is fully smooth}, $(x,t)$ is a fully smooth point for $\mu_t$, and so $(\sing^+ \hspace{-.04in}\mu)_t = \emptyset$. This holds for all but countably many $t <T$, and we conclude the corollary.
\end{proof}

The next proposition says that the countability assumption on the set of singular times implies that $\mu_t$ satisfies Conjecture \ref{Conjecture C}. This will be used to prove Theorem \ref{theorem main assumption BFE before tau}.

\begin{prop}\label{proposition conjecture C holds for countable singular times}
Suppose that one of the following assumptions holds:
\begin{itemize}
    \item $(M_0, \mu_t)$ satisfies the General Assumption for $t<T$, the level set flow of $M_0$ has an open cocountable set of regular times for $t<T=T_{\disc}$, and $k=1$,
    \item $\mu_t$ is an integral $n$-dimensional Brakke flow in $\bR^{n+k}$ which is unit regular, has bounded area ratios, and has a countable set of singular times for $t<T$.
\end{itemize}

Then, for each $\phi \in C^2_c(\bR^{n+k}, \bR_{\geq 0})$ and all but countably many $t \in [0, T)$, the derivative $\frac{d}{dt}\mu_t(\phi)$ exists and
$$\frac{d}{dt}\mu_t(\phi) = \cB(\mu_t, \phi)$$
where $\cB(\mu_t, \phi)$ is the Brakke variation.
\end{prop}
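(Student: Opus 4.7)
The plan is to invoke Corollary \ref{corollary main assumption implies vanishing singular set each time}, which under either hypothesis yields a cocountable set $\mathcal{J}\subseteq[0,T)$ with $(\sing^{+}\hspace{-.04in}\mu)_{t}=\emptyset$ for every $t\in\mathcal{J}$. For each such $t$ and each test function $\phi\in C^{2}_{c}(\bR^{n+k},\bR_{\geq 0})$, I aim to show that $\mu_{s}$ coincides, in a spacetime neighborhood of $\spt\phi$, with a unit-density smooth mean curvature flow; the classical first variation formula for smooth flow will then yield both differentiability of $s\mapsto\mu_{s}(\phi)$ at $s=t$ and the equality with $\cB(\mu_{t},\phi)$. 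Since $[0,T)\setminus\mathcal{J}$ is countable, this proves the proposition.

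Set $K=\spt\phi$ and fix $t\in\mathcal{J}$. Every $x\in K\cap(\spt\mu)_{t}$ is fully smooth, so by \eqref{equation fully smooth} there exist $r_{x},\eps_{x}>0$ and a smooth proper mean curvature flow $M_{s}^{x}$ of connected embedded hypersurfaces with bounded curvature in $B_{r_{x}}(x)$ such that $\mu_{s}\lfloor B_{r_{x}}(x)=\cH^{n}\lfloor M_{s}^{x}$ for $s\in(t-\eps_{x}^{2},t+\eps_{x}^{2})$. Compactness of $K\cap(\spt\mu)_{t}$ extracts a finite subcover by balls $B_{r_{i}}(x_{i})$; their union $U$, together with $\eps^{*}=\min_{i}\eps_{x_{i}}$, gives a smooth unit-density mean curvature flow $M_{s}$ in $U$ for $s\in(t-(\eps^{*})^{2},t+(\eps^{*})^{2})$ (the local pieces agree on overlaps because they both equal $\spt\mu_{s}$ there, and both are smooth). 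To rule out mass of $\mu_{s}$ entering $K\setminus U$ for nearby $s$, note that $K\setminus U$ is compact and disjoint from $(\spt\mu)_{t}$, so each $y\in K\setminus U$ admits an open spacetime neighborhood avoiding $\spt\mu$, and a finite subcover yields a uniform $\sigma>0$ with $\spt\mu_{s}\cap(K\setminus U)=\emptyset$ for $|s-t|<\sigma$. Setting $\eps=\min(\eps^{*},\sigma)$, we obtain $\mu_{s}(\phi)=\int\phi\,d\cH^{n}\lfloor M_{s}$ for $s\in(t-\eps^{2},t+\eps^{2})$.

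The standard first variation identity for the smooth flow $\{M_{s}\}$ then gives
\[
    \frac{d}{ds}\mu_{s}(\phi)=\int\bigl(-\phi|\vec{H}_{\mu_{s}}|^{2}+\na\phi\cdot\vec{H}_{\mu_{s}}\bigr)\,d\mu_{s}=\cB(\mu_{s},\phi)
\]
for every $s\in(t-\eps^{2},t+\eps^{2})$, which specializes to the claimed identity at $s=t$. The main obstacle is the uniformization step: upgrading the pointwise fully-smooth condition on $K\cap(\spt\mu)_{t}$ to a smooth mean curvature flow on a common time interval around $t$, while simultaneously preventing mass from appearing in $K\setminus U$. Both are handled by the two compactness arguments above, using the spacetime openness built into the definitions of ``fully smooth point'' and of the complement of the spacetime support. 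The unit-density conclusion $\mu_{s}\lfloor U=\cH^{n}\lfloor M_{s}$ (rather than a higher-multiplicity version) is crucial so that the right-hand side of the variation formula equals $\cB(\mu_{s},\phi)$ literally, and it is automatic from \eqref{equation fully smooth}.
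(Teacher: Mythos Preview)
Your proof is correct and follows essentially the same approach as the paper's own proof. Both invoke Corollary~\ref{corollary main assumption implies vanishing singular set each time} to obtain a cocountable set of times at which $(\sing^{+}\hspace{-.04in}\mu)_{t}=\emptyset$, then use compactness of $\spt\phi$ to pass from pointwise full smoothness to a uniform smooth flow on a common time interval, and finally apply the classical first variation formula for smooth mean curvature flow; you have simply spelled out in more detail the two compactness steps (the finite subcover of $K\cap\pi((\spt\mu)_t)$ and the clearing of $K\setminus U$) that the paper compresses into a single sentence.
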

\begin{proof}
By Corollary \ref{corollary main assumption implies vanishing singular set each time}, either assumption implies that for all but countably many $t \in [0, T)$, $(\sing^+ \hspace{-.04in}\mu)_t = \emptyset$.

Fix $\phi \in C^2_c(\bR^{n+k}, \bR_{\geq 0})$. For $t \in (0,T)$ such that $(\sing^+ \hspace{-.04in}\mu)_t = \emptyset$, there exists $\eps$ such that $\mu_s\lfloor \supp(\phi) = \cH^n \lfloor M_s$, where $M_s$ is a smooth flow in $\supp(\phi)$ defined for $s \in (t - \eps^2, t+\eps^2)$. This follows from the fact that $\supp(\phi)$ is compact and the fact that there is a spacetime ball around each $(x,t) \in (\spt \mu)_t$ such that $\mu_t$ is fully smooth in the ball. It is well-known that a smooth flow $M_s$ satisfies 
\begin{equation*}\label{equation equality in brakke flow derivative for prop}\frac{d}{ds}\cH^n \lfloor M_s(\phi) = \cB(\cH^n \lfloor M_s, \phi)\end{equation*}
By the fact that $\mu_s \lfloor \supp(\phi) = \cH^n \lfloor M_s$, we conclude that
$$\frac{d}{dt} \mu_t(\phi) = \cB(\mu_t, \phi)$$
where the derivative is evaluated at $t$. So, for all but countably many $t \in [0, T)$, $\frac{d}{dt}\mu_t(\phi)$ exists and $\frac{d}{dt}\mu_t(\phi) = \cB(\mu_t, \phi)$.
\end{proof}

\begin{prop}\label{proposition examples of main assumption}
Let $M_0 \subset \bR^{n+1}$ be a smooth closed embedded hypersurface. 
Suppose that one of the following assumptions holds:
\begin{itemize}
    \item The level set flow of $M_0$ has three-convex blow-up type and $T=T_{\disc}$,
    \item The level set flow of $M_0$ has only spherical singularities and neckpinches, and $T= \infty$.
\end{itemize}
Then, $(M_0, \mu_t)$ satisfies the General Assumption for $t<T$, and
\begin{enumerate}
    \item $\mu_t$ has a countable set of singular times for $t<T$,
    \item the level set flow of $M_0$ has an open cocountable set of regular times for $t<T$.
\end{enumerate}
\end{prop}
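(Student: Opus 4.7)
The plan is to verify the General Assumption first, then establish (1) using Colding--Minicozzi's stratification of the singular set for flows with cylindrical singularities, and finally deduce (2) from (1) together with Proposition \ref{proposition support is level set flow}.

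To handle the General Assumption, I treat the two hypotheses in parallel. For case (a), three-convex blow-up type with $T = T_{\disc}$, the definition of $T_{\disc}$ together with the observation that fattening forces the inner and outer flows to differ gives $T_{\fat} \geq T_{\disc}$, so the level set flow is nonfattening for $t < T_{\disc}$. For case (b), only spherical singularities and neckpinches, I invoke the Choi--Haslhofer--Hershkovits--White result~\cite{ChoiHaslhoferHersh18, ChoiHaslhoferHershWhite19, HershkovitsWhite17} to conclude $T_{\disc} = T_{\fat} = \infty$. In both cases the Brakke flow $\mu_t$ is then produced by elliptic regularization (Theorem \ref{theorem unit density for nonfattening}), completing the verification of the General Assumption.

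For (1), I first transfer the cylindrical structure of the blowups from the level set flow to tangent flows of $\mu_t$ using Lemma \ref{lemma lsf blowups are bf blowups}: every tangent flow of $\mu_t$ at a singular point is a multiplicity one shrinking cylinder $S^{n-k}\times \mathbb{R}^k$, with $k \in \{0,1,2\}$ in case (a) and $k \in \{0,1\}$ in case (b). I then apply Colding--Minicozzi's stratification theorem~\cite{ColMin16, ColMin19}: the stratum $\mathcal{S}_k$ of spacetime singular points with $k$-dimensional cylindrical axis is contained in a countable union of $k$-dimensional $C^1$ submanifolds of spacetime. The key point, which comes from the uniqueness of the axis of the tangent cylinder (also due to Colding--Minicozzi~\cite{ColMin15, ColdingIlmanenMinicozzi16}), is that each such submanifold is \emph{spacelike}, sitting in a constant time slice because its tangent plane at any point equals the purely spatial cylinder axis. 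Hence the image of $\mathcal{S}_k$ under the time projection $\tau$ is countable for each $k$, and taking a union over the finitely many admissible $k$ yields a countable set of singular times for $\mu_t$, verifying (1).

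For (2), at each time $t_0$ in the cocountable complement of these singular times, every point of $(\spt \mu)_{t_0}$ has Gaussian density one, so by the unit regularity of $\mu_t$ (Proposition \ref{proposition no quasistatic mult 1 planes}) and Remark \ref{remark mult 1 static plane is fully smooth} each such point is fully smooth. This means $(\spt \mu)_{t_0}$ is locally a smooth mean curvature flow on some time interval around $t_0$, and combined with Proposition \ref{proposition support is level set flow} identifying $\spt \mu$ with $\{u=0\}$ for $t < T_{\disc}$, it follows that $F_{t_0}(M_0)$ is a smooth embedded hypersurface and that the regular times form a relatively open cocountable set. The main obstacle I expect is the careful invocation of Colding--Minicozzi's stratification in the precise \emph{spacelike} form needed here---extracting that each stratum lies in a constant-time slice locally, not merely that it is $k$-dimensional in spacetime---and tracking that the stratification applies to $\mu_t$ after the transfer from the level set flow via Lemma \ref{lemma lsf blowups are bf blowups}.
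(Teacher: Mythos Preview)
Your approach is essentially the same as the paper's. One small gap: Lemma \ref{lemma lsf blowups are bf blowups} alone does not establish that every tangent flow of $\mu_t$ with $\Theta > 1$ is cylindrical---it only handles backwardly \emph{singular} points of the outer flow. You also need Proposition \ref{proposition no Tmult for lsf with cylindrical singularities} (or its argument) to show that a backwardly \emph{regular} point of the level set flow gives a fully smooth point of $\mu_t$, hence $\Theta = 1$ there; without this, a priori some $\Theta>1$ point of $\mu_t$ might not correspond to a level-set-flow singularity, and Colding--Minicozzi's hypothesis that all $\Theta>1$ tangent flows are cylindrical would not be verified. The paper invokes both results before applying the stratification. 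Otherwise your proof tracks the paper's closely, and your description of the Colding--Minicozzi mechanism (spacelike strata because the tangent plane equals the spatial cylinder axis) is a correct unpacking of what the paper cites more tersely as \cite[Theorem 3.20, Proof of Theorem 0.3]{ColMin16}.
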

\begin{proof}\hfill\\
\textbf{Three-convex blow-up type:}

The level set flow of $M_0$ is nonfattening for $t< T = T_{\disc}$. By elliptic regularization (Theorem \ref{theorem unit density for nonfattening}), there is an associated unit density Brakke flow $\mu_t$ so that $(M_0, \mu_t)$ satisfies the General Assumption for $t< T$. Proposition \ref{proposition no quasistatic mult 1 planes} implies that $\mu_t$ is unit regular until time $T_{\disc}$.

By Lemma \ref{lemma lsf blowups are bf blowups} and Proposition \ref{proposition no Tmult for lsf with cylindrical singularities}, all tangent flows of $\mu_t$, until time $T$, are static multiplicity one planes or round shrinking cylinders $S^{n-k}\times \bR^k$ for $k = \{0,1,2\}$.
We also note that all Gaussian density ratios of $\mu_t$ are uniformly bounded since $M_0$ is smooth and embedded (see Theorem \ref{theorem huisken monotonicity}). 

We may now apply Colding-Minicozzi's stratification theory to $\mu_t$~\cite[Theorem 3.20, Theorem 0.3]{ColMin16}. Colding-Minicozzi's result applies for Brakke flows with only cylindrical tangent flows, i.e.\ any tangent flow with $\Te>1$ is a round shrinking cylinder. These Brakke flows are assumed to be unit regular, have uniform bounds on Gaussian density ratios, and have a smooth closed embedded initial condition.\footnote{It does not say explicitly what the assumptions are for the stratification theory, but it is evident that it uses the same assumptions as the uniqueness result for cylindrical tangent flows~\cite{ColMin15}, on which the stratification theory depends. This is generalized to arbitrary codimension in ~\cite{ColMin19}.} So, $\mu_t$ satisfies the requisite conditions to apply the results of \cite{ColMin16}. 

The stratification theory implies that for Brakke flows with three-convex singularities, there are countably many connected components of the spacetime singular set~\cite[Theorem 3.20]{ColMin16}, each of which is contained in a time-slice~\cite[Proof of Theorem 0.3]{ColMin16}.\footnote{Colding-Minicozzi do not write this statement exactly, but it follows from their work.} Here, the singular set consists of points with $\Te>1$. So, applying this theory to $\mu_t$, for all but countably many $t \in [0, T)$, $\Te(x,t) =1$ for all $(x,t) \in (\spt \mu)_t$. Since $\Te(x,t)<1$ for $(x,t)\notin (\spt \mu)_t$, there is a cocountable set $\cI \subseteq [0, T)$ such that for $t \in \cI$, for each $x \in \bR^{n+1}$, $\Te(x,t)\leq 1$. This means that $\mu_t$ has a countable set of singular times for $t<T$. 

Since $\mu_t$ is unit regular, has bounded area ratios (see Theorem \ref{theorem huisken monotonicity}), and has a countable set of singular times for $t<T$, $(\sing^+\hspace{-.04in}\mu)_t = \emptyset$ for $t \in \cI$ by Corollary \ref{corollary main assumption implies vanishing singular set each time}. Arguing as in the proof of Proposition \ref{proposition conjecture C holds for countable singular times}, this implies that for $t \in \cI$, there is $\eps>0$ so that $\mu_s = \cH^n \lfloor M_s$ for $s \in (t - \eps^2, t+\eps^2)$, where $M_s$ is a smooth flow defined for $s \in (t - \eps^2, t+ \eps^2)$. This follows from the fact that $(\spt \mu)_t$ is compact and the fact that for $t \in \cI$, each $(x,t) \in (\spt \mu)_t$ is a fully smooth point. Thus, we find that $(\spt \mu)_s = M_s\times \{s\}$ for each $s \in (t - \eps^2, t+\eps^2)$. By Proposition \ref{proposition support is level set flow}, $\spt \mu = \{u=0\}$ for $t<T_{\disc}$. Intersecting with $\{s\}$, we get that $(\spt \mu)_s = F_s(M_0)\times \{s\}$. So, $F_s(M_0) = M_s$ for $s \in (t - \eps^2, t+\eps^2)$. This implies $t \in \cI$ is a regular time of the level set flow of $M_0$. This holds for all $t \in \cI$, and in fact, this shows that each $t\in\cI$ is an interior point of the set of regular times for the level set flow. Since $\cI$ is cocountable, we conclude that the level set flow of $M_0$ has an open cocountable set of regular times. 

\noindent \textbf{Spherical singularities and neckpinches:}

Applying \cite{ChoiHaslhoferHersh18, ChoiHaslhoferHershWhite19}, a level set flow with only spherical singularities and neckpinches is nonfattening and $T_{\disc}=\infty$. There is an associated unit density Brakke flow $\mu_t$ from elliptic regularization, so $(M_0, \mu_t)$ satisfies the General Assumption for $t \in [0, \infty)$. The same argument as for three-convex blow-up type works to show that $\mu_t$ has a countable set of singular times for $t \in [0, \infty)$, and the level set flow of $M_0$ has an open cocountable set of regular times for $t \in [0, \infty)$.
\end{proof}

\section{Partial Regularity and Mass Continuity}\label{section partial regularity and mass continuity}
In this section, we prove Theorems \ref{theorem equivalence of mass continuity, codim 2} and \ref{theorem mass continuity implies mult one} relating a partial regularity condition to the no mass drop conjecture. Then, we prove Theorem \ref{theorem main result} with an application of stratification. Corollary \ref{corollary general assumption no mass drop} and Corollary \ref{corollary mean convex neighborhoods of singularities} follow, using Theorem \ref{theorem main result} and statements from Section \ref{section characterization of the main assumption}. We will then upgrade Theorem \ref{theorem equivalence of mass continuity, codim 2} to Theorem \ref{theorem main assumption BFE before tau} using the countability assumption on the set of singular times. Using Theorem \ref{theorem main assumption BFE before tau} and statements from Section \ref{section characterization of the main assumption}, we prove Corollary \ref{corollary two convex three convex BFE} and then Corollary \ref{corollary limit flows of BFEs}.

\subsection{Small Singular Sets and No Mass Drop}\hfill\\
\vspace{-.2in}

We first show that the a priori weaker assumption that $t \mapsto \bf{M}[\mu_t]$ is continuous implies that $t \mapsto \mu_t(\phi)$ is continuous. The point of this proposition is that for compact flows, continuity of $t \mapsto \bf{M}[\mu_t]$ is no less general than continuity of $t\mapsto \mu_t(\phi)$ for each $\phi \in C^2_c(\bR^{n+k}, \bR_{\geq 0})$. 
\begin{prop}\label{proposition mass continuous implies mass measure continuous}
Let $\{\mu_t\}_{t \geq 0}$ be an integral Brakke flow supported in a fixed ball for all time.\\
If $t \mapsto \bf{M}[\mu_t]$ is continuous, then $t \mapsto \mu_t(\phi)$ is continuous for each $\phi \in C^2_c(\bR^{n+k}, \bR_{\geq 0})$.
\end{prop}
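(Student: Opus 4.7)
The plan is to leverage Lemma \ref{lemma properties of BF}, which already gives one-sided inequalities between $\mu_t(\phi)$ and its left/right limits for any nonnegative $C^2_c$ test function. The one direction $\lim_{s\nearrow t}\mu_s(\phi)\geq \mu_t(\phi)\geq \lim_{s\searrow t}\mu_s(\phi)$ is automatic, so the only task is to produce matching inequalities in the opposite direction using the hypothesis that $\mathbf{M}[\mu_t]$ is continuous in $t$.

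The key trick is to pick a global \emph{cutoff} $\psi\in C^2_c(\mathbb{R}^{n+k},\mathbb{R}_{\geq 0})$ with $\psi\equiv 1$ on the fixed ball $B_R$ in which all the $\mu_t$ are supported. Since $\mathrm{spt}\,\mu_s\subseteq B_R$ for every $s\geq 0$, we have $\mu_s(\psi)=\mu_s(1)=\mathbf{M}[\mu_s]$ for all $s$, so our hypothesis reads exactly as continuity of $s\mapsto \mu_s(\psi)$. Now given any $\phi\in C^2_c(\mathbb{R}^{n+k},\mathbb{R}_{\geq 0})$ and $M:=\sup\phi$, the function $\eta:=M\psi-\phi$ lies in $C^2_c(\mathbb{R}^{n+k},\mathbb{R}_{\geq 0})$: on $\mathrm{supp}(\phi)$, provided $\psi$ was chosen to also equal $1$ on a neighborhood of $\mathrm{supp}(\phi)\cup B_R$, we have $\eta=M-\phi\geq 0$, while off $\mathrm{supp}(\phi)$ we have $\eta=M\psi\geq 0$. (If $\mathrm{supp}(\phi)$ is huge, one simply enlarges $\psi$; the only constraint that matters for what follows is $\psi\equiv 1$ on $B_R$.)

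The plan is then to apply Lemma \ref{lemma properties of BF}(2) to $\eta$:
\begin{equation*}
\lim_{s\nearrow t}\mu_s(\eta)\geq \mu_t(\eta)\geq \lim_{s\searrow t}\mu_s(\eta).
\end{equation*}
Expanding $\mu_s(\eta)=M\mu_s(\psi)-\mu_s(\phi)$ and rearranging, these inequalities become
\begin{equation*}
\mu_t(\phi)-\lim_{s\nearrow t}\mu_s(\phi)\geq M\bigl(\mu_t(\psi)-\lim_{s\nearrow t}\mu_s(\psi)\bigr),
\end{equation*}
and analogously for $s\searrow t$. By the continuity of $\mathbf{M}[\mu_\cdot]$ and the identity $\mu_s(\psi)=\mathbf{M}[\mu_s]$, the right-hand sides vanish. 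Combining with the default one-sided inequalities from Lemma \ref{lemma properties of BF}, we conclude
\begin{equation*}
\lim_{s\nearrow t}\mu_s(\phi)=\mu_t(\phi)=\lim_{s\searrow t}\mu_s(\phi),
\end{equation*}
which is continuity at $t$. The case $t=0$ is handled identically with the one-sided statement in Lemma \ref{lemma properties of BF}(2).

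There is no real obstacle here; the only subtle point is the construction of $\psi$ so that $M\psi-\phi$ is genuinely nonnegative \emph{everywhere}, not just on $B_R$, so that Brakke's monotonicity-type estimate from Lemma \ref{lemma properties of BF} legitimately applies to it. Once $\psi\equiv 1$ on a neighborhood of $\mathrm{supp}(\phi)\cup B_R$, the identity $\mu_s(\psi)=\mathbf{M}[\mu_s]$ transfers the hypothesized mass continuity directly into continuity of $s\mapsto \mu_s(\phi)$ via the $M\psi-\phi$ test function, completing the argument.
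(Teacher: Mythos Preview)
Your proof is correct and follows essentially the same approach as the paper's own proof: both pick a cutoff $\psi\equiv 1$ on the supporting ball so that $\mu_s(\psi)=\mathbf{M}[\mu_s]$, then apply Lemma~\ref{lemma properties of BF}(2) to a complementary nonnegative test function (the paper uses $\psi(1-\phi)$ after normalizing $\phi\leq 1$, you use $M\psi-\phi$) to convert the one-sided inequalities into equalities via the continuity of $\mathbf{M}[\mu_\cdot]$. The only cosmetic difference is that the paper handles nonnegativity by scaling $\phi$ down, whereas you enlarge $\psi$ to cover $\mathrm{supp}(\phi)$; both are fine.
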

\begin{proof} 
Let $B$ be the ball in which $\mu_t$ is supported for all time. This means there exists a smooth cutoff function $\psi \in C^2_c(\bR^{n+k}, \bR_{\geq 0})$ such that $\psi \equiv 1$ on $B$ and $\mu_t(\psi) = \bf{M}[\mu_t]$ for all $t \geq 0$.

Let $\phi \in C^2_c(\bR^{n+k}, \bR_{\geq 0})$ such that $\phi \leq 1$. Let $t_0 \geq 0$. Using the definition of $\psi$ and the fact that the right limits of $\mu_t$ exist by Lemma \ref{lemma properties of BF},
\begin{align*}
     \lim_{s \searrow t_0} \mu_s(\phi) + \lim_{s \searrow t_0} \mu_s(\psi(1-\phi)) &= \lim_{s \searrow t_0} \mu_s(\psi\phi) + \lim_{s \searrow t_0} \mu_s(\psi(1-\phi))\\
     &= \lim_{s \searrow t_0} \mu_s((\phi + 1-\phi)\psi)\\
     &= \lim_{s \searrow t_0} \bf{M}[\mu_s]\\
\intertext{Using the mass continuity assumption,}
     &= \bf{M}[\mu_{t_0}]\\
     &= \mu_{t_0}(\psi \phi)+ \mu_{t_0}(\psi (1-\phi))\\
     &= \mu_{t_0}(\phi) + \mu_{t_0}(\psi(1-\phi))
\end{align*}
By Lemma \ref{lemma properties of BF}, we have that 
$$\lim_{s \searrow t_0}\mu_s(\phi) \leq \mu_{t_0}(\phi)$$
$$\lim_{s \searrow t_0}\mu_s(\psi(1- \phi)) \leq \mu_{t_0}(\psi(1-\phi))$$
We conclude that 
\begin{equation}\label{equation one sided limit mass measure continuous}
\lim_{s \searrow t_0} \mu_s(\phi) = \mu_t(\phi)\end{equation}
for each $\phi\leq 1$. By scaling and the fact that $\phi$ was arbitrary, (\ref{equation one sided limit mass measure continuous}) holds for any $\phi \in C^2_c(\bR^{n+k}, \bR_{\geq 0})$. Also, $t_0$ was arbitrary, so (\ref{equation one sided limit mass measure continuous}) holds for any $t_0$.

Applying the same reasoning for $s \nearrow t_0$ and any $t_0 >0$, we conclude that $t \mapsto \mu_t(\phi)$ is continuous for each $t\geq 0$ and $\phi$.
\end{proof}

We now show that a partial regularity condition implies that the mass is continuous. Recall that $(\sing^+ \hspace{-.04in}\mu)_t$ is defined to be all the points in the time-$t$ slice of the spacetime support, $(\spt \mu)_t$, which are not fully smooth points (see the definitions of Section \ref{section singularities mean curvature flow}).

\begin{thm}\label{theorem equivalence of mass continuity, codim 2}
Let $T>0$. Let $\{\mu_t\}_{t\geq 0}$ be an integral $n$-dimensional Brakke flow in $\bR^{n+k}$ with bounded area ratios. Suppose that for each $t \in (0,T)$,
     \begin{equation}\label{equation codimension 1 assumption on singular set}
 \cH^{n}((\sing^+\hspace{-.04in} \mu)_t)=0
 \end{equation}
Then, for each $\phi \in C^2_c(\bR^{n+k}, \bR_{\geq 0})$, $t \mapsto \mu_t(\phi)$ is continuous for $t\in (0,T)$.
\end{thm}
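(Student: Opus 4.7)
My plan is to establish continuity at any fixed $t_0 \in (0,T)$ and any fixed $\phi \in C^2_c(\bR^{n+k}, \bR_{\geq 0})$ by splitting $\phi$ via a partition of unity into one piece supported near the $\cH^n$-null set $(\sing^+\hspace{-.04in}\mu)_{t_0}$ — controlled uniformly by bounded area ratios — and finitely many pieces supported in spacetime neighborhoods where $\mu_s$ is either identically zero or a smooth mean curvature flow, where continuity in $s$ is automatic. Combining with the one-sided semicontinuity in Lemma \ref{lemma properties of BF} will then pin both one-sided limits at $t_0$ to $\mu_{t_0}(\phi)$.

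Concretely, set $K := \supp \phi$ and $S := (\sing^+\hspace{-.04in}\mu)_{t_0} \cap K$. Since $\sing^+\hspace{-.04in}\mu$ is closed in $\spt \mu$ and $\spt \mu$ is closed in spacetime, $S$ is compact; by hypothesis $\cH^n(S)=0$, so for each $\epsilon > 0$ I can cover $S$ by finitely many open balls $B_{r_i}(x_i)$, $i=1,\dots,N$, with $\sum r_i^n < \epsilon$. Let $U := \bigcup_i B_{r_i}(x_i)$. Every point of the compact set $K \setminus U$ is either (a) a fully smooth point of $\mu$ at time $t_0$, or (b) not in $(\spt\mu)_{t_0}$ at all. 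In case (a), the definition of fully smooth point supplies a spacetime neighborhood $V \times (t_0-\eta^2, t_0+\eta^2)$ in which $\mu_s \lfloor V = \cH^n \lfloor M_s$ for a smooth mean curvature flow $M_s$. In case (b), since $\spt\mu$ is closed and $(x,t_0) \notin \spt\mu$, there is a spacetime neighborhood $V \times (t_0-\eta^2, t_0+\eta^2)$ disjoint from $\spt\mu$, so $\mu_s \lfloor V \equiv 0$ there. By compactness of $K \setminus U$, finitely many such spatial balls $V_1,\dots,V_M$ cover $K \setminus U$, and there is a common $\delta>0$ such that for every $s \in (t_0-\delta^2,t_0+\delta^2)$ the above behavior holds on each $V_j$. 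Pick a smooth partition of unity $\{\psi_0,\psi_1,\dots,\psi_M\}$ on $K$ subordinate to $\{U, V_1, \dots, V_M\}$.

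For each $j \geq 1$, the function $\phi\psi_j$ is compactly supported in $V_j$, where $\mu_s$ is either $0$ or a smooth flow for $s$ near $t_0$; continuity of $s \mapsto \mu_s(\phi\psi_j)$ at $t_0$ is then immediate from the smoothness of $M_s$ (combined with Lemma \ref{lemma restarting with unit density} to upgrade the pointwise identification to the right surface measure, if needed). For $j = 0$, bounded area ratios with constant $\Lambda$ give the \emph{uniform} estimate
\begin{equation*}
\mu_s(\phi\psi_0) \leq \|\phi\|_\infty \, \mu_s(U) \leq \|\phi\|_\infty \sum_{i=1}^N \mu_s(B_{r_i}(x_i)) \leq \|\phi\|_\infty \Lambda \omega_n \sum_{i=1}^N r_i^n \leq C\epsilon
\end{equation*}
valid for every $s$. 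In particular, $\mu_{t_0}(\phi\psi_0) \leq C\epsilon$ as well.

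Now apply Lemma \ref{lemma properties of BF} twice. First, $\limsup_{s\nearrow t_0}\mu_s(\phi\psi_0) \leq C\epsilon$ by the above uniform bound, so summing over the partition gives $\limsup_{s\nearrow t_0}\mu_s(\phi) \leq C\epsilon + \sum_{j\geq 1}\mu_{t_0}(\phi\psi_j) \leq \mu_{t_0}(\phi) + C\epsilon$. Combined with the inequality $\lim_{s\nearrow t_0}\mu_s(\phi) \geq \mu_{t_0}(\phi)$ from Lemma \ref{lemma properties of BF} and letting $\epsilon \to 0$, the left limit equals $\mu_{t_0}(\phi)$. Symmetrically, $\liminf_{s\searrow t_0}\mu_s(\phi) \geq \sum_{j\geq 1}\mu_{t_0}(\phi\psi_j) \geq \mu_{t_0}(\phi) - C\epsilon$, and the reverse inequality from Lemma \ref{lemma properties of BF} forces the right limit to equal $\mu_{t_0}(\phi)$. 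The main technical obstacle, and the reason the hypothesis $\cH^n((\sing^+\hspace{-.04in}\mu)_{t}) = 0$ must hold for \emph{every} $t \in (0,T)$ and not just a.e., is precisely this need to trap the potential mass loss in a set of arbitrarily small $n$-dimensional measure at the specific time $t_0$, so that bounded area ratios can absorb it uniformly over all nearby $s$.
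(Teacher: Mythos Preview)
Your proof is correct and follows essentially the same approach as the paper: split $\phi$ into a piece supported near the $\cH^n$-null singular set (controlled uniformly by bounded area ratios) and pieces supported where the flow is fully smooth or absent (where continuity in $s$ is automatic), then use the one-sided semicontinuity of Lemma \ref{lemma properties of BF} to pin down both limits. The paper organizes this slightly differently---it first isolates a lemma showing $\lim_{s\to t_0}\mu_s(\psi)=\mu_{t_0}(\psi)$ for all $\psi$ compactly supported away from $\pi(S_{t_0})$ via a partition of unity, and then applies a single cutoff $\psi_\eps$ near the singular set---but your single partition of unity on $K$ subordinate to $\{U,V_1,\dots,V_M\}$ accomplishes the same thing more directly.
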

\begin{rmk}\label{remark unit density immanent}
We stress that $(\sing^+\hspace{-.04in} \mu)_t$ includes any points with quasistatic planar tangent flows. A smooth flow which suddenly disappears will not satisfy (\ref{equation codimension 1 assumption on singular set}) at the vanishing time. We also note that a unit density assumption in Theorem \ref{theorem equivalence of mass continuity, codim 2} is unnecessary since (\ref{equation codimension 1 assumption on singular set}) has an inherent unit density assumption already. Fully smooth points are defined so that the flow is unit density in a spacetime neighborhood. 
\end{rmk}
\begin{proof}

Define 
$$S_t := (\sing^+\hspace{-.04in} \mu)_t$$

%\begin{lem}\label{lemma small singular set}
%For each $t \in (0, T)$, if $(x,t) \in (\spt \mu)_t \setminus S_t$ then $x \in \spt \mu_t$ and $(x,t)$ is a fully smooth point for the flow $\mu_t$.

%\end{lem}
%\begin{proof}
%Let $t \in (0, T)$ and let $(x,t) \in (\spt \mu)_t \setminus \spt \mu_{t}\times \{t\}$. If there exists a tangent flow at $(x,t)$ which is a static multiplicity one plane, then by Remark \ref{remark mult 1 static plane is fully smooth}, $(x,t)$ is a fully smooth point. This implies that $x \in \spt \mu_t$, which contradicts the assumption that $(x,t) \notin \spt \mu_t \times \{t\}$. So, there is no static multiplicity one planar tangent flow at such $(x,t)$. This implies that $(x,t) \in S_t$ by definition. So,
%$$(\spt \mu)_t\setminus \spt \mu_t \times \{t\}\subseteq S_t$$

%Therefore, if $(x,t) \in (\spt \mu)_t \setminus S_t$, then $x \in \spt \mu_t$. 
%By definition, 
%if $(x,t) \in (\spt \mu)_t \setminus S_t$, then some tangent flow at $(x,t)$ is a static multiplicity one plane, so by Remark \ref{remark mult 1 static plane is fully smooth}, $(x,t)$ is a fully smooth point. 
%\end{proof}

Fix $t_0 \in (0,T)$. 
In order to prove this lemma, we will show that for each $ \phi \in C^2_c(\bR^{n+k}, \bR_{\geq 0})$, $t_0$ is a continuity point for $t \mapsto \mu_t(\phi)$. Since $\mu_t$ has bounded area ratios, $\mu_t$ has locally bounded mass in particular and $\mu_t(\phi)$ is finite for each $\phi \in C^2_c(\bR^{n+k}, \bR_{\geq 0})$.

Let $\pi: \bR^{n+k}\times \bR_{\geq 0} \to \bR^{n+k}$ be the projection onto $\bR^{n+k}$, i.e.\ $\pi(x,t) = x$. We will often use the projections $\pi((\spt \mu)_{t_0})$ and $\pi(S_{t_0})$, since $(\spt \mu)_{t_0}$ and $S_{t_0}$ are subsets of the time slice $\{t=t_0\}$.

\begin{lem}\label{lemma convergence of functions away from singular set}
For $\psi \in C^2_c(\bR^{n+k}, \bR_{\geq 0})$ compactly supported in $\bR^{n+k} \setminus \pi(S_{t_0})$,
$$\lim_{s \to t_0} \mu_s(\psi) = \mu_{t_0}(\psi)$$
\end{lem}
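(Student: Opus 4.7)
The plan is to localize on a finite cover of $\spt \psi$ that reflects the structure of the flow at time $t_0$. Being fully smooth is an open spacetime condition (the same smooth flow $M_s$ in (\ref{equation fully smooth}) witnesses the fully smooth property at all nearby spacetime points), so $\sing^+\hspace{-.04in}\mu$ is closed in $\spt\mu$, and therefore $\pi(S_{t_0})$ is closed in $\bR^{n+k}$. Hence the compact set $\spt \psi$ has positive distance from $\pi(S_{t_0})$, and every $x \in \spt \psi$ is either in $\pi((\spt \mu)_{t_0})$ at a fully smooth spacetime point $(x,t_0)$, or lies in the open complement of $\spt \mu$ at time $t_0$.

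For each $x \in \spt \psi \cap \pi((\spt \mu)_{t_0})$, the definition of fully smooth supplies $r_x,\epsilon_x>0$ such that $\mu_s\lfloor B_{r_x}(x) = \cH^n\lfloor M_s^{(x)}$ for a smooth proper mean curvature flow of embedded hypersurfaces $M_s^{(x)}$ on $s \in (t_0-\epsilon_x^2, t_0+\epsilon_x^2)$. For each $x \in \spt \psi$ with $(x,t_0)\notin \spt \mu$, the openness of the complement of $\spt\mu$ in spacetime gives $r_x,\epsilon_x>0$ with $\mu_s(B_{r_x}(x))=0$ for $s$ in the corresponding time interval. Using compactness of $\spt \psi$ I extract a finite subcover $\{B_{r_i}(x_i)\}_{i=1}^N$ together with a common $\epsilon>0$, and pick a smooth partition of unity $\{\eta_i\}_{i=1}^N$ subordinate to this cover on a neighborhood of $\spt \psi$.

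With these choices,
\begin{equation*}
 \mu_s(\psi) \;=\; \sum_{i=1}^N \mu_s(\psi\eta_i) \qquad \text{for } s \in (t_0-\epsilon^2,\,t_0+\epsilon^2).
\end{equation*}
Each term corresponding to a vanishing ball contributes $0$, while each term corresponding to a smooth ball equals $\int \psi\eta_i\,d(\cH^n \lfloor M_s^{(x_i)})$, which is continuous in $s$ because $M_s^{(x_i)}$ is a smooth flow of embedded hypersurfaces and $\psi\eta_i \in C^2_c$. Summing finitely many continuous functions yields continuity of $s \mapsto \mu_s(\psi)$ at $t_0$, giving the lemma. The only mild technical points are verifying that $\sing^+\hspace{-.04in}\mu$ is closed in $\spt \mu$ and extracting a uniform $\epsilon$ from the finite cover; both are straightforward, so there is no serious obstacle here beyond assembling the pieces cleanly.
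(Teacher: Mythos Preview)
Your proof is correct and follows essentially the same approach as the paper: split $\spt\psi$ into pieces near fully smooth points (where $\mu_s$ is a smooth flow and mass is continuous) and pieces away from the spacetime support (where $\mu_s$ vanishes), then sum via a partition of unity. The only cosmetic differences are that the paper uses a locally finite partition of unity on all of $\bR^{n+k}\setminus\pi(S_{t_0})$ rather than first extracting a finite subcover, and it handles the ``away from support'' case by a short contradiction argument rather than directly invoking openness of the complement of $\spt\mu$ in spacetime; your versions of both steps are arguably cleaner.
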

\begin{proof}
Let $\psi_1\in C(K, \bR_{\geq 0})$ for a compact set $K \subset \bR^{n+k} \setminus \pi((\spt \mu)_{t_0})$. We have that
\begin{equation}\label{equation psi in W has vanishing limit mu}
\lim_{s \to t_0} \mu_s(\psi_1) = 0    
\end{equation}
 
If (\ref{equation psi in W has vanishing limit mu}) were not true, then there would exist a sequence of times $s_j \to t_0$ such that $\lim_{s_j \to t_0} \mu_{s_j}(\psi_1) >0$. So there would exist a sequence of points $y_j \in \spt \mu_{s_j}$ such that $y_j \in K$. Since $K$ is compact and disjoint from the closed set $\pi((\spt \mu)_{t_0})$, some subsequence of $(y_j,s_j)$ converges to $(x,t_0) \neq (\spt \mu)_{t_0}$. Since $\spt \mu$ is a closed subset of spacetime,
%$$\bigcup_{t \geq 0} \lim_{s \to t}\spt \mu_s$$
$(x,t_0) \in (\spt \mu)_{t_0}$, which is a contradiction. This proves (\ref{equation psi in W has vanishing limit mu}).
\hfill\\

By definition of $S_{t_0}$, %Lemma \ref{lemma small singular set}, 
each $(x,t_0)\in (\spt \mu)_{t_0} \setminus S_{t_0}$ is a fully smooth point. This means that there exists an open ball $B(x) \subset \bR^{n+k}$ and a time interval $(t_0 - \eps^2, t_0 + \eps^2)$ such that $\mu_s\lfloor B(x) = \cH^n \lfloor M_s$ for a smooth proper embedded mean curvature flow $M_s \subset B(x)$ with bounded curvature for $s \in (t_0 - \eps^2, t_0 + \eps^2)$.
For $x \in \pi((\spt \mu)_{t_0} \setminus S_{t_0})$, let $\phi_x \in C_c(B(x), \bR_{\geq 0})$ be compactly supported in $B(x)$.  Since $M_s$ is a smooth proper flow and such flows have continuous mass,
\begin{equation}\label{equation limit of flows}\lim_{s \to t_0} \mu_s(\phi_x) = \mu_{t_0}(\phi_x)\end{equation}
for any such $\phi_x$.

Now, define 
$$B':= \bR^{n+k} \setminus \pi((\spt \mu)_{t_0})$$
Let $\cB$ be the open cover of $\bR^{n+k} \setminus \pi(S_{t_0})$ given by $B'$ together with $B(x)$ for each $x \in \pi((\spt \mu)_{t_0}\setminus S_{t_0})$. That is,
$$\cB := \{B'\}\cup \bigcup_{x \in \pi((\spt \mu)_{t_0}\setminus S_{t_0})}\{B(x)\}$$

Let $X = \{\eta_i\}_{i \in \cI}$ be a partition of unity subordinate to $\cB$. For each compact set $K \subset \bR^{n+k} \setminus \pi(S_{t_0})$, all but finitely many functions in $X$ vanish in $K$. Then, for $\psi \in C^2_c(\bR^{n+k}, \bR_{\geq 0})$ such that $\psi$ is compactly supported in $\bR^{n+k}\setminus \pi(S_{t_0})$, there exists a finite subset $\cI_0 \subset \cI$, depending on $\psi$, such that
$$\mu_s(\psi) = \sum_{i \in \cI_0} \mu_s(\eta_i \psi)$$
and this sum is finite and well-defined regardless of $s$. Since $X$ is subordinate to $\cB$, each $\eta_i$ is compactly supported in some set in $\cB$. So, for each $i \in \cI_0$, $\text{supp}(\eta_i)\subset B'$ or there exists $x_j \in \pi((\spt \mu)_{t_0}\setminus S_{t_0})$ such that $\text{supp}(\eta_j) \subset B(x_j)$. Define
$$\cI_1 := \{i \in \cI_0 \,|\,\text{supp}(\eta_i) \subset B(x_j)\text{ for some }x_j \in \pi((\spt \mu)_{t_0}\setminus S_{t_0})\}$$
$$\cI_2 := \{i \in \cI_0 \,|\,\text{supp}(\eta_i) \subset B'\}\setminus \cI_1$$
Since $X$ is subordinate to $B$, we have that $\cI_0 = \cI_1 \sqcup \cI_2$. 

Now, for $\psi \in C^2_c(\bR^{n+k}, \bR_{\geq 0})$ compactly supported in $\bR^{n+k} \setminus \pi(S_{t_0})$,
\begin{align*}
    \lim_{s \to t_0} \mu_s(\psi)&= \lim_{s \to t_0}\sum_{i \in \cI_0} \mu_s(\eta_i \psi)\\
    &= \lim_{s \to t_0} \sum_{i \in \cI_1} \mu_s(\eta_i \psi) + \lim_{s \to t_0}\sum_{i \in \cI_2} \mu_s(\eta_i \psi)\\
\intertext{For $i \in \cI_2$, $\text{supp}(\eta_i \psi)$ is a compact subset of $B' = \bR^{n+k} \setminus \pi((\spt \mu)_{t_0})$. Applying (\ref{equation psi in W has vanishing limit mu}),}
    &=\lim_{s \to t_0} \sum_{i \in \cI_1} \mu_s(\eta_i \psi)\\
\intertext{Using finiteness of the sum,}
    &= \sum_{i \in \cI_1} \lim_{s \to t_0}\mu_s(\eta_i \psi)\\
\intertext{Since $\eta_i \psi$ is compactly supported in some $B(x_j)$ for each $i \in \cI_1$, we may apply (\ref{equation limit of flows}).}
    &= \sum_{i \in \cI_1} \mu_{t_0}(\eta_i \psi) \\
\intertext{Again, for $i \in \cI_2$, $\mu_{t_0}(\eta_i \psi)= 0$ since $\text{supp}(\eta_i \psi)$ is a compact subset of $\bR^{n+k}\setminus \pi((\spt \mu)_{t_0})$.}
    &= \sum_{i \in \cI_1} \mu_{t_0}(\eta_i \psi) + \sum_{i \in \cI_2} \mu_{t_0}(\eta_i \psi)\\
    &= \sum_{i \in \cI_0} \mu_{t_0}(\eta_i \psi)\\
    &= \mu_{t_0}(\psi)
\end{align*}
This concludes the lemma.
\end{proof}

Note that $S_{t_0}$ is closed. Indeed, let $\{(x_j,t_0)\}_{j \geq 1} \subset S_{t_0}$ be a converging sequence $(x_j,t_0) \to (x,t_0)$. Since $(\spt \mu)_{t_0}$ is closed, $(x,t_0) \in (\spt \mu)_{t_0}$. If $(x,t_0)$ is a fully smooth point, then there would exist a small neighborhood around $(x,t_0)$ in $(\spt \mu)_{t_0}$ consisting of fully smooth points. This would imply that some $(x_j,t_0)$ is fully smooth, which contradicts that $(x_j,t_0) \in S_{t_0}$.

Let $\phi \in C^2_c(\bR^{n+k}, \bR_{\geq 0})$. Define

$$S^{\phi}_{t_0}:= S_{t_0}\cap (\text{supp}(\phi)\times \{t_0\})$$

Since $S_{t_0}$ is closed, $S_{t_0}^{\phi}$ is compact. Since $S_{t_0}, S_{t_0}^{\phi} \subset \{t=t_0\}$, $\pi(S_{t_0})$ and $\pi(S_{t_0}^{\phi})$ are closed and compact, respectively, in $\bR^{n+k}$.
\vspace{.1in}

Let $\eps >0$. By the assumption that $\cH^{n}(S_{t_0})=0$, $\cH^n(\pi(S_{t_0})) = 0$ as well since $S_{t_0} \subset \{t = t_0\}$. Then, there exists $\de(\eps)>0$ and a countable collection of open sets $\{C_{\eps,j}\}_{j \geq 1}$ in $\bR^{n+k}$ such that
\begin{enumerate}
    \item $\{C_{\eps, j}\}_{j \geq 1}$ is an open cover for $\pi(S_{t_0}^{\phi})$,
    \item For each $j$, $\diam C_{\eps, j} \leq \de(\eps)$ and $C_{\eps, j}\cap \pi(S_{t_0}^{\phi}) \neq \emptyset$, 
    \item 
    \begin{equation}\label{equation hausdorff bound}\omega_{n}\sum_{j \geq 1} \Big(\frac{\diam C_{\eps, j}}{2} \Big)^{n}< \eps\end{equation}
\end{enumerate}
By definition of the Hausdorff measure, we may find $\de(\eps)$ such that $\de(\eps) \to 0$ as $\eps \to 0$. %Note of course that the Hausdorff measure can be defined using just open covers~\cite[2.3]{simon2014introduction}. 

Define $$C_{\eps}:= \bigcup_{j \geq 1} C_{\eps, j}$$

Since $\bR^{n+k} \setminus C_{\eps}$ is closed and disjoint from the compact $\pi(S_{t_0}^{\phi})$, there exists $\eta>0$ such that $d(\bR^{n+k} \setminus C_{\eps}, \pi(S_{t_0}^{\phi}))>\eta$. Then, there exists a smooth cutoff function $0\leq \psi_{\eps}\leq 1$ such that $\psi_{\eps} \equiv 1$ on an open set containing $\pi(S_{t_0}^{\phi})$ and $\psi_{\eps} \equiv 0$ on $\bR^{n+k} \setminus C_{\eps}$.

Since $1-\psi_{\eps}$ vanishes in an open set around the compact $\pi(S_{t_0}^{\phi})$, $\supp(1-\psi_{\eps})$ does not intersect $\pi(S_{t_0}^{\phi})$. Since $\pi(S_{t_0}^{\phi}) = \pi(S_{t_0})\cap \supp(\phi)$, $\supp(\phi(1-\psi_{\eps}))$ does not intersect $\pi(S_{t_0})$. So, $\phi(1-\psi_{\eps})$ is compactly supported in $\bR^{n+k}\setminus \pi(S_{t_0})$. Using this fact, we apply Lemma \ref{lemma convergence of functions away from singular set} to find
\begin{equation}\label{equation application of 4.3}
    \lim_{s \to t_0} \mu_{s}(\phi(1-\psi_{\eps})) = \mu_{t_0}(\phi(1-\psi_{\eps}))
\end{equation}
By Lemma \ref{lemma properties of BF},
\begin{align*}
    \mu_{t_0}(\phi)&\leq \lim_{s \nearrow t_0}\mu_{s}(\phi)\\
    &= \lim_{s \nearrow t_0}\big( \mu_{s}(\phi \psi_{\eps})+ \mu_{s}(\phi(1-\psi_{\eps}))\big)\\
\intertext{Since the left limits exist by Lemma \ref{lemma properties of BF},}
    &= \lim_{s \nearrow t_0} \mu_{s}(\phi\psi_{\eps})+\lim_{s \nearrow t_0} \mu_{s}(\phi(1-\psi_{\eps}))\\
\intertext{By (\ref{equation application of 4.3}),}
    &= \lim_{s \nearrow t_0} \mu_{s}(\phi\psi_{\eps}) + \mu_{t_0}(\phi(1-\psi_{\eps}))\\
    &\leq \lim_{s \nearrow t_0} \mu_{s}(\phi\psi_{\eps}) + \mu_{t_0}(\phi)\\
    &\leq \max(\phi)\lim_{s \nearrow t_0} \mu_{s}(\psi_{\eps}) + \mu_{t_0}(\phi)\\
    &\leq \max(\phi)\lim_{s \nearrow t_0} \mu_{s}(C_{\eps}) + \mu_{t_0}(\phi)\\
    &\leq \max(\phi)\lim_{s \nearrow t_0} \sum_{j \geq 1} \mu_{s}(C_{\eps, j})  + \mu_{t_0}(\phi)\\
\intertext{Replacing each $C_{\eps, j}$ with a ball $B(\diam C_{\eps, j}/2)$ of radius $\diam C_{\eps, j}/2$ containing $C_{\eps, j}$,}
    &\leq \max(\phi)\lim_{s \nearrow t_0} \sum_{j \geq 1} \mu_{s}(B(\diam C_{\eps, j}/2)) + \mu_{t_0}(\phi)\\
\intertext{Applying the bounded area ratios assumption, denoting the bound by $\La$ as in Definition \ref{definition bounded area ratios},}
    &\leq \max(\phi)\omega_{n}\La \lim_{s \to t_0} \sum_{j \geq 1} \Big(\frac{\diam C_{\eps, j}}{2}\Big)^{n} + \mu_{t_0}(\phi)\\
\intertext{By (\ref{equation hausdorff bound}),}
    &\leq \max(\phi)\La \,\eps + \mu_{t_0}(\phi)
\end{align*}
Since $\eps>0$ is arbitrary, we get that
\begin{equation*}\mu_{t_0}(\phi) \leq \lim_{s \nearrow t_0}\mu_{s}(\phi) \leq \mu_{t_0}(\phi)\end{equation*}
We then conclude that
\begin{equation}\label{equation mass continuity one direction}
\lim_{s \nearrow t_0}\mu_{s}(\phi) = \mu_{t_0}(\phi)
\end{equation}
\hfill\\
Now, we will prove the other direction of (\ref{equation mass continuity one direction}), beginning with a lemma.
\begin{lem}\label{lemma liminf psieps}
$$\lim_{\eps \to 0}\mu_{t_0}(\phi(1-\psi_{\eps})) = \mu_{t_0}(\phi)$$
\end{lem}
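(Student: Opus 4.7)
The plan is to rewrite $\mu_{t_0}(\phi(1-\psi_\eps)) = \mu_{t_0}(\phi) - \mu_{t_0}(\phi \psi_\eps)$ and show that the cutoff contribution $\mu_{t_0}(\phi \psi_\eps)$ vanishes as $\eps \to 0$. Notice that this is just the single-time analogue of the covering estimate already carried out for $\lim_{s \nearrow t_0} \mu_s(\phi \psi_\eps)$ in the preceding display, so the bound should come from exactly the same input: bounded area ratios applied to the cover $\{C_{\eps,j}\}_{j \geq 1}$ of $\pi(S_{t_0}^\phi)$.

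First I would use that $\psi_\eps \equiv 0$ outside $C_\eps = \bigcup_j C_{\eps,j}$ and $0 \leq \psi_\eps \leq 1$ to bound
$$\mu_{t_0}(\phi \psi_\eps) \leq \max(\phi)\, \mu_{t_0}(C_\eps) \leq \max(\phi) \sum_{j \geq 1} \mu_{t_0}(C_{\eps,j}).$$
Next, I would enclose each $C_{\eps,j}$ in a Euclidean ball of radius $\diam(C_{\eps,j})/2$ and apply the bounded area ratios hypothesis (with constant $\La$, valid at every fixed time by Definition \ref{definition bounded area ratios}) to get
$$\sum_{j \geq 1} \mu_{t_0}(C_{\eps,j}) \leq \La \,\omega_{n} \sum_{j \geq 1} \Big(\tfrac{\diam C_{\eps,j}}{2}\Big)^{n} < \La \,\eps,$$
where the final inequality is (\ref{equation hausdorff bound}) built into the construction of the cover.

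Combining the two estimates gives $\mu_{t_0}(\phi \psi_\eps) \leq \max(\phi)\, \La\, \eps$, which tends to $0$ as $\eps \to 0$. Since $\mu_{t_0}(\phi(1-\psi_\eps)) = \mu_{t_0}(\phi) - \mu_{t_0}(\phi \psi_\eps)$ and $\psi_\eps \geq 0$ gives the reverse inequality $\mu_{t_0}(\phi(1-\psi_\eps)) \leq \mu_{t_0}(\phi)$, sending $\eps \to 0$ yields the claimed limit.

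There is no real obstacle here; the argument is the same covering estimate used one display earlier, transplanted from a left limit in $s$ to the single time $t_0$. The only minor point to be careful about is that the finiteness $\mu_{t_0}(\phi) < \infty$ (needed to take the difference) follows from bounded area ratios together with the compact support of $\phi$.
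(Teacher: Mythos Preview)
Your proof is correct, but it takes a different route from the paper. The paper argues via pointwise convergence and reverse Fatou: it shows that $\limsup_{\eps \to 0}\psi_\eps(x)=0$ for $\mu_{t_0}$-a.e.\ $x$ (because any $x$ with $\psi_{\eps_i}(x)>0$ along a sequence $\eps_i\to 0$ must lie in the $\de(\eps_i)$-neighborhood of the compact set $\pi(S_{t_0}^\phi)$, hence in $\pi(S_{t_0}^\phi)$ itself, which has $\cH^n$ measure zero), and then applies reverse Fatou with dominating function $\phi$ to conclude $\mu_{t_0}(\phi\psi_\eps)\to 0$. You instead recycle the quantitative covering bound from the preceding display at the single time $t_0$, which is more direct: bounded area ratios plus the Hausdorff cover estimate (\ref{equation hausdorff bound}) immediately give $\mu_{t_0}(\phi\psi_\eps)\leq \max(\phi)\,\La\,\eps$. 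Your approach is shorter and avoids the dominated-convergence machinery; the paper's approach has the mild advantage that it does not re-invoke the area-ratio bound and would work for any measure making $\phi$ integrable and $\pi(S_{t_0}^\phi)$ null. Both are perfectly valid here. (One cosmetic remark: your final ``reverse inequality'' sentence is unnecessary, since you have already established the two-sided limit $\mu_{t_0}(\phi\psi_\eps)\to 0$ directly.)
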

\begin{proof}
We will first see that $\limsup_{\eps \to 0}\psi_{\eps}(x) = 0$ for almost every $x \in \bR^{n+k}$. 

Suppose that for $x_0 \in \bR^{n+k}$, $\limsup_{\eps \to 0}\psi_{\eps}(x_0) >0$. Since $\psi_{\eps} \equiv 0$ on $\bR^{n+k}\setminus C_{\eps}$, there is a sequence $\eps_i \to 0$ such that $x_0 \in C_{\eps_i}$ for each $i$. By construction, each $C_{\eps_i}$ is contained in the $\de(\eps_i)$-neighborhood of $\pi(S_{t_0}^{\phi})$. Recall that $\de(\eps_i) \to 0$ as $\eps_i \to 0$. Since $\pi(S_{t_0}^{\phi})$ is compact, $\pi(S_{t_0}^{\phi}) = \bigcap_{i \geq 1} C_{\eps_i}$. This means that $x_0 \in \pi(S_{t_0}^{\phi})$. Since $\pi(S_{t_0}^{\phi})$ has measure zero, $\limsup_{\eps \to 0}\psi_{\eps}(x) =0$ for almost every $x \in \bR^{n+k}$. 

Now, $\phi$ is integrable and dominates $\phi \psi_{\eps}$ for each $\eps$. Applying the reverse Fatou lemma and the fact that $\limsup_{\eps \to 0}\psi_{\eps}(x) = 0$ for a.e.\ $x \in \bR^{n+k}$,
$$\limsup_{\eps \to 0} \mu_{t_0}(\phi \psi_{\eps}) \leq \mu_{t_0}(\limsup_{\eps \to 0}\phi \psi_{\eps}) = 0$$
Since $\mu_{t_0}(\phi \psi_{\eps}) \geq 0$, $$\lim_{\eps \to 0} \mu_{t_0}(\phi \psi_{\eps})=0$$
Then,
\begin{align*}
    \mu_{t_0}(\phi) &= \mu_{t_0}(\phi) - \lim_{\eps\to 0}\mu_{t_0}(\phi \psi_{\eps})\\
    &= \lim_{\eps \to 0} \mu_{t_0}(\phi(1- \psi_{\eps}))
\end{align*}
\end{proof}

We may now finish the final calculation. Applying Lemma \ref{lemma properties of BF}, using that the right limits exist,
\begin{align*}
    \mu_{t_0}(\phi) & \geq \lim_{s \searrow t_0}\mu_s(\phi)\\
    & = \lim_{s \searrow t_0}\mu_s(\phi(1-\psi_{\eps})) + \lim_{s \searrow t_0}\mu_s(\phi\psi_{\eps})\\
    &\geq \lim_{s \searrow t_0}\mu_s(\phi(1-\psi_{\eps}))\\
\intertext{Applying Lemma \ref{lemma convergence of functions away from singular set}, since $\phi(1-\psi_{\eps})$ is compactly supported in $\bR^{n+k}\setminus \pi(S_{t_0})$,}    
    &= \mu_{t_0}(\phi(1 - \psi_{\eps}))
\end{align*}
Taking the limit of both sides as $\eps \to 0$ and using Lemma \ref{lemma liminf psieps},
$$\mu_{t_0}(\phi) \geq \lim_{s \searrow t_0}\mu_s(\phi) \geq \mu_{t_0}(\phi)$$
We conclude that
\begin{equation}\label{equation mass one direction}
    \lim_{s \searrow t_0} \mu_s(\phi) = \mu_{t_0}(\phi)
\end{equation}

Combining (\ref{equation mass continuity one direction}) and (\ref{equation mass one direction}), we find that $t_0$ is a continuity point for $t \mapsto \mu_{t}(\phi)$. This is true for each $t_0 >0$ and $\phi \in C^2_c(\bR^{n+k}, \bR_{\geq 0})$, so the theorem follows. 
\end{proof}

\begin{rmk}
The proof of Theorem \ref{theorem equivalence of mass continuity, codim 2} also works in any complete smooth ambient Riemannian manifold.
\end{rmk}

We will now prove the converse statement of Theorem \ref{theorem equivalence of mass continuity, codim 2}. This is based on the Brakke regularity theorem, which has particularly strong consequences assuming mass continuity.

\begin{thm}\label{theorem mass continuity implies mult one}
Let $T>0$. Let $\{\mu_t\}_{t\geq 0}$ be an integral $n$-dimensional Brakke flow in $\bR^{n+k}$ which is unit density and has bounded area ratios. Suppose that for each $\phi \in C^2_c(\bR^{n+k}, \bR_{\geq 0})$, $t \mapsto \mu_t(\phi)$ is continuous for $t \in (0,T)$.

Then, for each $t \in (0,T)$,
\begin{equation}\label{equation Hn zero set of mult planes}
 \cH^{n}((\sing^+\hspace{-.04in}\mu)_t)=0
\end{equation}
\end{thm}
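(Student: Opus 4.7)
For fixed $t_0 \in (0,T)$, the goal is to show that $\cH^n$-a.e.\ point of $(\spt \mu)_{t_0}$ is a fully smooth point of $\mu$. My plan is to reduce this to a pointwise Gaussian density statement via Brakke's $\epsilon$-regularity theorem, and then extract that density statement from the assumed mass continuity by means of Huisken monotonicity.

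First I would reduce from $(\spt \mu)_{t_0}$ to $\spt \mu_{t_0}$. By Brakke's clearing-out lemma, the set of $x \in (\spt \mu)_{t_0}$ which do not lie in $\spt \mu_{t_0}$ has $\cH^n$ measure zero; this is precisely Lemma \ref{lemma lahiri's support lemma}, cited from \cite[Theorem 9.7]{lahiri2014regularity}. Thus it suffices to show that $\cH^n$-a.e.\ $x \in \spt \mu_{t_0}$ is a fully smooth point of $\mu$.

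Next I would promote the mass continuity hypothesis to pointwise control on the Gaussian density and then to full smoothness. Huisken monotonicity makes $s \mapsto \int \Phi_{(x,t_0)}\,d\mu_s$ nonincreasing for $s < t_0$, with limit $\Theta(x,t_0)$, where $\Phi_{(x,t_0)}$ is the backward heat kernel based at $(x,t_0)$. After truncating $\Phi_{(x,t_0)}$ and approximating by $C^2_c$ test functions, the hypothesis $\lim_{s \to t_0}\mu_s(\phi) = \mu_{t_0}(\phi)$ forces the monotone quantity to remain continuous through $t_0$. Plugging this into the Vitali-type covering argument of \cite[Theorem 6.12]{Brakke1978}, reformulated in \cite[Lemma 9.5]{lahiri2014regularity} and cited here as Lemma \ref{lemma lahiri's lemma}, produces a subset $E \subseteq \spt \mu_{t_0}$ with $\cH^n(\spt \mu_{t_0} \setminus E) = 0$ on which $\Theta(x,t_0) = \Theta^n(\mu_{t_0},x)$. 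The unit density hypothesis, combined with integer rectifiability of $\mu_{t_0}$ exactly as in the proof of Lemma \ref{lemma restarting with unit density}, gives $\Theta^n(\mu_{t_0},x) = 1$ for $\cH^n$-a.e.\ $x \in \spt \mu_{t_0}$. Consequently $\Theta(x,t_0) = 1$ on a full-measure subset, so every tangent flow of $\mu$ at such an $(x,t_0)$ is a multiplicity one plane through the origin. The quasistatic possibility is eliminated by mass continuity itself: a quasistatic planar tangent at $(x,t_0)$ would, via Brakke's $\epsilon$-regularity theorem, yield a spacetime neighborhood in which $\mu_t$ is smooth and unit-density for $t \leq t_0$ but suddenly vanishes for $t > t_0$, contradicting $\lim_{s \searrow t_0}\mu_s(\phi) = \mu_{t_0}(\phi)$ for $\phi$ concentrated near $x$. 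Hence every tangent flow is a static multiplicity one plane, and Remark \ref{remark mult 1 static plane is fully smooth} identifies $(x,t_0)$ as a fully smooth point.

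The principal obstacle is the promotion step: reducing the relatively weak hypothesis of continuity of $t \mapsto \mu_t(\phi)$ for each fixed $\phi$ to a pointwise $\cH^n$-a.e.\ left-right equality of the Huisken density integral through $t_0$ requires the delicate Brakke--Lahiri Vitali covering argument. Once Gaussian densities have been pinned to $1$ on a full-$\cH^n$-measure subset of $\spt \mu_{t_0}$, the rest is a standard application of monotonicity, integer rectifiability, and Brakke regularity.
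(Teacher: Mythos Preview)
Your overall architecture matches the paper's: invoke Brakke's almost-everywhere regularity theorem (Theorem 6.12 in \cite{Brakke1978}, reproved in \cite{lahiri2014regularity}) to handle $\spt \mu_{t_0}$, and then use the clearing-out argument of Lemma \ref{lemma lahiri's support lemma} to show that $\pi((\spt\mu)_{t_0})\setminus\spt\mu_{t_0}$ is $\cH^n$-null. Those two pieces are exactly what the paper does.

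The difference, and the gap, is in how you extract full smoothness from Brakke 6.12. You pass through the intermediate statement $\Theta(x,t_0)=\Theta^n(\mu_{t_0},x)=1$ for $\cH^n$-a.e.\ $x$, which via Brakke regularity yields only \emph{backward} smoothness on $(t_0-\epsilon^2,t_0]$. You then try to recover forward smoothness by arguing that a quasistatic multiplicity-one tangent would force $\mu_t$ to vanish for $t>t_0$ near $x$, contradicting mass continuity. This inference is not justified: the tangent flow is a limit of parabolic rescalings, so its vanishing for $t>0$ only says that $\lambda_i^n\mu_{t_0+\lambda_i^{-2}s}(B_{\lambda_i^{-1}R}(x))\to 0$ along some sequence $\lambda_i\to\infty$. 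That is a statement about area ratios at shrinking scales and does not imply $\mu_s\lfloor B_\epsilon(x)\equiv 0$ for $s>t_0$ at any fixed scale $\epsilon$, which is what you would need to contradict $\lim_{s\searrow t_0}\mu_s(\phi)=\mu_{t_0}(\phi)$.

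The paper avoids this detour entirely. Brakke's Theorem 6.12 already delivers \emph{two-sided} spacetime smoothness of $\spt\mu_t$ in a ball $B_\epsilon(x)\times(t_0-\epsilon^2,t_0+\epsilon^2)$ for $\cH^n$-a.e.\ $x\in\spt\mu_{t_0}$; the forward regularity is built into that theorem and is precisely where the hypothesis of mass continuity (as opposed to mere monotonicity) does its work. The paper then applies Lemma \ref{lemma restarting with unit density} to upgrade smoothness of the support to $\mu_t\lfloor B_\epsilon(x)=\cH^n\lfloor M_t$ on the whole spacetime neighborhood, so $(x,t_0)$ is fully smooth without any separate treatment of the quasistatic case. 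If you want to keep your Gaussian-density route, you would need an independent forward-in-time argument of comparable strength to the forward half of Brakke 6.12; the one-line contradiction you sketch does not supply it.
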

\begin{proof}
Brakke showed that unit density $n$-dimensional Brakke flows with continuous mass have $\cH^{n}$-a.e.\ regularity of $\spt \mu_t$ for each time~\cite[Theorem 6.12]{Brakke1978}. That is, the assumption that $\mu_t$ is unit density and $t\mapsto \mu_t(\phi)$ is continuous for $t\in (0,T)$ implies that for each $t_0 \in (0,T)$, there is a closed set $C_{t_0} \subset \bR^{n+k}$, with $\cH^n(C_{t_0})=0$, such that for each $x \in \spt \mu_{t_0} \setminus C_{t_0}$, there is a ball $B_{\eps}(x)$ such that $\spt \mu_t$ is a smooth flow of embeddings in $B_{\eps}(x)$ for $t \in (t_0 - \eps^2, t_0 + \eps^2)$ (see also \cite[Theorem 9.7]{lahiri2014regularity}). That is, the support is smooth in a spacetime neighborhood of $(x,t_0)$, where $x \in \spt \mu_{t_0}\setminus C_{t_0}$. We may choose $\eps$ small enough so that $\spt \mu_t$ is connected in $B_{\eps}(x)$ for $t \in (t_0 - \eps^2, t_0 + \eps^2)$. Applying Lemma \ref{lemma restarting with unit density}, using the fact that $\mu_t$ is unit density and that $\spt \mu_t$ is smooth with bounded curvature in $B_{\eps}(x)$, we find that for $t \in (t_0 - \eps^2, t_0 + \eps^2)$,
\begin{equation}\label{equation static mult one plane on support for mass cont}\mu_t\lfloor B_{\eps}(x) = \cH^n \lfloor \spt \mu_t\cap B_{\eps}(x)\end{equation}
Since $\spt \mu_t$ is a smooth flow of embeddings in $B_{\eps}(x)$, (\ref{equation static mult one plane on support for mass cont}) implies that there is a static multiplicity one planar tangent flow at $(x,t_0)$. So, there is a static multiplicity one planar tangent flow at each $(x,t_0)$ with $x \in \spt \mu_{t_0}\setminus C_{t_0}$. So, each $(x,t_0)$, with $x \in \spt \mu_{t_0}\setminus C_{t_0}$, is a fully smooth point by Remark \ref{remark mult 1 static plane is fully smooth}. By the definition of $(\sing^+ \hspace{-.04in}\mu)_{t_0}$ and the fact that $\cH^n(C_{t_0})=0$,
\begin{equation}\label{equation tmult(t) first restriction}\cH^n(\pi((\sing^+\hspace{-.04in}\mu)_{t_0})\cap \spt \mu_{t_0}) = 0\end{equation}
for each $t_0 \in (0,T)$, where $\pi: \bR^{n+k}\times \bR_{\geq 0} \to \bR^{n+k}$ is the projection function onto $\bR^{n+k}$ in spacetime, i.e.\ $\pi(x,t) = x$. 
By definition, $(\sing^+\hspace{-.04in}\mu)_{t_0}\subset (\spt \mu)_{t_0}$, so 
\begin{equation}\label{equation tmult(t) second restriction}
    \cH^n\big(\pi((\sing^+\hspace{-.04in}\mu)_{t_0})\cap (\bR^{n+k}\setminus \pi((\spt \mu)_{t_0}))\big) = 0
\end{equation}
Now, define
$$G(t_0) := \pi((\spt \mu)_{t_0})\setminus \spt \mu_{t_0}$$
The ultimate goal is to show that $\cH^n(\pi((\sing^+\hspace{-.04in}\mu)_{t_0}))=0$, which implies the desired result (\ref{equation Hn zero set of mult planes}). Given (\ref{equation tmult(t) first restriction}) and (\ref{equation tmult(t) second restriction}), we only need to show 
\begin{equation}\label{equation intermediate sing}\cH^n\big(\pi((\sing^+\hspace{-.04in}\mu)_{t_0})\cap G(t_0)\big)=0\end{equation}
We will prove (\ref{equation intermediate sing}) by proving that $\cH^n(G(t_0))=0$ in Lemma \ref{lemma lahiri's support lemma}. This lemma seems to be a corollary of Lahiri's proof of the Brakke regularity theorem \cite[Theorem 9.7]{lahiri2014regularity}, but we provide a proof here for completeness. Our proof of Lemma \ref{lemma lahiri's support lemma} is morally speaking similar to the approach of \cite[Theorem 9.7]{lahiri2014regularity}, but we use a more standard, elementary version of Brakke's clearing out lemma.

\begin{lem}\label{lemma lahiri's support lemma}
For each $t_0 >0$,
$$\cH^n(G(t_0)) = 0$$
\end{lem}
\begin{proof}
We will use the following lemma, due to Lahiri~\cite[Lemma 9.5]{lahiri2014regularity}, which is largely a modification of Brakke's argument in \cite[Theorem 6.12]{Brakke1978}. The idea of this lemma is that the set of points which have microscopic drops in area ratio bigger than the threshold $\tau>0$ has $\cH^n$ measure zero.

\begin{lem}[{\cite[Lem.\ 9.5]{lahiri2014regularity}}]\label{lemma lahiri's lemma}
For all $R, L, \tau \in (0,\infty)$, the following holds: Let $\mu_t$ be an integral Brakke flow in $B_{2R}(0)$ defined for $t \in [-R^2, R^2]$ such that $t \mapsto \mu_t(\phi)$ is continuous at $t=0$ for each $\phi \in C^2_c(B_{2R}(0), \bR_{\geq 0})$. For $\de \in (0,R)$, let
\begin{align*}
    D_{\mu}(\tau, \de) &:= \{x \in B_R(0)\,|\,\cD(x,\de)\geq \tau\}\\
    \cD_{\mu}(x, \de) &:= \sup_{\phi} \sup_{t \in (-\de^2, \de^2)} |\de^{-n}\mu_t(\phi) - \de^{-n}\mu_0(\phi)|
\end{align*}
where the supremum is taken over $\phi \in C_c^{0,1}(B_{\de}(x), [0,1])$ such that $\Lip(\phi) \leq \de^{-1}L$.

Then, 
$$\cH^n\Big(\hspace{-.03in}\bigcap_{\de \in (0,R)} D_{\mu}(\tau, \de)\Big) = 0$$
\end{lem}

Let $x \in G(t_0)$. Since $x \notin \spt \mu_{t_0}$ and $\spt \mu_{t_0}$ is closed, there is $r:= r(x)$ and a ball $B_r(x)$ such that $B_r(x) \subset \bR^{n+k}\setminus \spt \mu_{t_0}$. Then, $\mu_{t_0}(\phi) = 0$ for each $\phi \in C^2_c(B_r(x), \bR_{\geq 0})$. 
\begin{lem}\label{lemma lahiri D lower bound}
For $t_0 >0$, let $x \in G(t_0)$ and let $\tilde{\mu}_t$ be the time-shifted Brakke flow $\tilde{\mu}_t:= \mu_{t+t_0}$. Then, there exists $\tau = \tau(n,k)$ such that for $L=L(n,k)$ and each $\de \in (0,r(x))$, 
$$\cD_{\tilde{\mu}}(x,\de)\geq \tau$$
\end{lem}
\begin{proof}
Recall Brakke's clearing out lemma~\cite[6.3]{Brakke1978} (see~\cite[12.2]{Ilmanen94} for this particular statement): there are constants $\eta>0$ and $0<c_1<c_2$ depending only on $n$ and $k$ such that for any integral Brakke flow $\{\mu_t\}_{t \geq 0}$, $\de>0$, and $(x,t_0) \in \bR^{n+k}\times \bR_{\geq 0}$, if $\mu_{t_0}(B_{\de}(x))\leq \eta\, \de^n$, then $\mu_t(B_{\de/2}(x)) = 0$ for $t \in [t_0 + c_1 \de^2, t_0 + c_2 \de^2]$.

Let $x \in G(t_0)$. For $\de < r(x)$, $B_{\de}(x) \subset \bR^{n+k}\setminus \spt \mu_{t_0}$, so $\tilde{\mu}_0(B_{\de}(x))=0$. Thus, for $\de \in (0,r(x))$ and each $L$,
\begin{equation}\label{equation reduction to support}\cD_{\tilde{\mu}}(x,\de) = \sup_{\phi} \sup_{t \in (-\de^2, \de^2)} \de^{-n}\tilde{\mu}_t(\phi)\end{equation}
where the supremum is taken over $\phi \in C^{0,1}_c(B_{\de}(x), [0,1])$ such that $\Lip(\phi) \leq \de^{-1}L$.

Suppose that for constants $C$ and $L$ to be chosen later,
\begin{align}\label{equation supposition}
    \cD_{\tilde{\mu}}(x, \de_0) < C \eta
\end{align}
for some $\de_0 \in (0,r(x))$ where $\eta$ is as in the clearing out lemma.

Let $\phi \in C_c^{2}(B_1(0), [0,1])$ such that $\phi \equiv 1$ on $B_c(0)$ for $c \in (0,1)$ to be chosen later and $\Lip(\phi) \leq L= 10(1-c)^{-1}$. Then, define $\phi_{\de_0}(\cdot) = \phi(\de_0^{-1}(\cdot - x))$. So, $\phi_{\de_0} \in C^2_c(B_{\de_0}(x), [0,1])$ with $\Lip(\phi_{\de_0}) \leq \de_0^{-1}L$.

Now, (\ref{equation reduction to support}) and (\ref{equation supposition}) imply that for $\de_0$ and $L$,
\begin{align*}C\eta> \cD_{\tilde{\mu}}(x,\de_0) &= \sup_{\phi} \sup_{t \in (-\de_0^2, \de_0^2)} \de_0^{-n}\tilde{\mu}_t(\phi)\\
&\geq \sup_{t \in (-\de_0^2, \de_0^2)} \de_0^{-n}\tilde{\mu}_t(\phi_{\de_0})\\
&\geq \sup_{t \in (-\de_0^2, \de_0^2)} \de_0^{-n} \tilde{\mu}_t(B_{c\de_0}(x))
\end{align*}
Thus, for $C = c^n$,
\begin{equation}\label{equation clearing out initial}\sup_{t \in (-\de_0^2, \de_0^2)} \tilde{\mu}_t(B_{c\de_0}(x)) < \eta (c\de_0)^n\end{equation}
Let $c_1, c_2$ be the constants from the clearing out lemma. If $c_1 > 1$, choose $c \in (0,1)$ such that $c_1 < c^{-2} < c_2$. If $c_1 \leq 1$, let $c = 1/2$. Then, choose $t_0 \in (-\de_0^2, 0)$ such that $|t_0| \in (c_1(c\de_0)^2, c_2 (c\de_0)^2)$. This choice of $t_0$ is possible by our choice of $c$. These choices imply that
$$0 \in I:= [t_0 + c_1(c\de_0)^2, t_0 + c_2 (c\de_0)^2]$$
By (\ref{equation clearing out initial}), $\tilde{\mu}_{t_0}(B_{c\de_0}(x)) < \eta (c\de_0)^n$. Applying the clearing out lemma to $I$, $$\tilde{\mu}_t(B_{c\de_0/2}(x)) = 0$$ 
for each $t \in I$. This means that $\tilde{\mu}_t\equiv 0$ in a spacetime neighborhood of $(x,0)$. So, $\mu_t \equiv 0$ in a spacetime neighborhood of $(x,t_0)$. This implies that $(x,t_0)$ does not belong to the spacetime support of $\mu$ and in particular, $(x,t_0) \notin (\spt \mu)_{t_0}$. This means that $x \notin \pi((\spt \mu)_{t_0})$.

Thus, if $x \in G(t_0)$ and we choose $c$ depending only on $c_1, c_2$ as above, choose $C = c^n$, choose $L = 10(1-c)^{-1}$, and if $\cD_{\tilde{\mu}}(x,\de_0)< C\eta$ for some $\de_0 \in (0,r(x))$, then $x \notin \pi((\spt \mu)_{t_0})$. Since $x\in G(t_0) \subset \pi((\spt \mu)_{t_0})$, this is a contradiction. Note that all choices of constants depend on the constants $c_1, c_2, \eta$ from the clearing out lemma, which only depends on $n$ and $k$. So, there exist constants $\tau = C\eta$ and $L$ depending only on $n$ and $k$, such that for $x \in G(t_0)$, this choice of $L$, and each $\de \in (0,r(x))$,
$$\cD_{\tilde{\mu}}(x,\de)\geq \tau$$
This completes the proof of Lemma \ref{lemma lahiri D lower bound}.
\end{proof}
\noindent Now, for each positive integer $m \in \bZ_{>0}$, define $R_m := \min(1/m, \sqrt{t_0})$. Define
\begin{align*}
U_m &:= \{x \in \bR^{n+k}\,|\,d(x, \spt \mu_{t_0})> R_m\}\\
G_m &:= G(t_0) \cap U_m
\end{align*}

\noindent Consider the countable open cover of $U_m$
$$\cB_m:= \{B_{R_m}(y_j)\}_{j=1}^{\infty}$$
where $\{y_j\}_{j=1}^{\infty}$ is an enumeration of $\bQ \cap U_m$. Indeed, since $U_m$ is an open set, for each $y \in U_m$, there exists a subsequence $y_{j'} \in \bQ \cap U_m$ converging to $y$ and so $y \in B_{R_m}(y_{j'})$ for some $y_{j'}$. Thus, $\cB_m$ is a cover of $U_m$. In particular, $\cB_m$ covers $G_m$.

By definition, for each $x \in G_m$, $r(x) > 1/m$. This means that $R_m < r(x)$ for each $x \in G_m$. By Lemma \ref{lemma lahiri D lower bound}, for each $x \in G_m$, there exists $\tau$ and $L$ depending only on $n$ and $k$ such that for each $\de \in (0,R_m)$,
\begin{equation}\label{equation final lahiri D lower bound}\cD_{\tilde{\mu}}(x,\de) \geq \tau\end{equation}
where $\tilde{\mu}_t := \mu_{t+t_0}$. As in Lemma \ref{lemma lahiri's lemma}, define
$$D^{y_j, m}_{\tilde{\mu}}(\tau, \de) := \{x \in B_{R_m}(y_j)\,|\,\cD_{\tilde{\mu}}(x,\de) \geq \tau\}$$
By (\ref{equation final lahiri D lower bound}), for each $j$, we have that
\begin{equation}\label{equation subset lahiri D}
G_m \cap B_{R_m}(y_j) \subseteq \bigcap_{\de \in (0,R_m)} D^{y_j, m}_{\tilde{\mu}}(\tau, \de)\end{equation}

Although Lemma \ref{lemma lahiri's lemma} is written for flows defined in $B_{2R}(0)$, it applies just as well for balls centered on $y_j$. We apply Lemma \ref{lemma lahiri's lemma} to $\tilde{\mu}_t$ around $B_{2R_m}(y_j)$ using the fact that $t\mapsto \mu_t(\phi)$ is continuous.  Lemma \ref{lemma lahiri's lemma} says that
\begin{equation*} \cH^n\Big( \hspace{-.03in}\bigcap_{\de \in (0,R_m)} D^{y_j, m}_{\tilde{\mu}}(\tau, \de)\Big) = 0\end{equation*}
and so by (\ref{equation subset lahiri D}),
\begin{equation}\label{equation zero measure atom}
\cH^n(G_m \cap B_{R_m}(y_j)) = 0
\end{equation}
Since $\cB_m$ covers $G_m$, (\ref{equation zero measure atom}) implies by countable subadditivity that
\begin{equation}\label{equation Gm sum}\cH^n(G_m) = \cH^n\Big(\hspace{-.03in}\bigcup_{j=1}^{\infty} G_m \cap B_{R_m}(y_j)\Big) = 0\end{equation}
Since $r(x)>0$ for each $x \in G(t_0)$, there exists an $m$ such that $x \in G_m$ for each $x \in G(t_0)$. Thus,
\begin{equation}\label{equation Gm sum 2}G(t_0) = \bigcup_{m=1}^{\infty}G_m\end{equation}
So, (\ref{equation Gm sum}) and (\ref{equation Gm sum 2}) imply by countable subadditivity that
$$\cH^n(G(t_0)) = \cH^n\Big(\hspace{-.03in}\bigcup_{j=1}^{\infty}G_m\Big) = 0$$
This concludes Lemma \ref{lemma lahiri's support lemma}.
\end{proof}
Using the definition of $G(t_0)$, Lemma \ref{lemma lahiri's support lemma} tells us that (\ref{equation intermediate sing}) holds, i.e.\
\begin{equation}\label{equation tmult third restriction}\cH^n\big(\pi((\sing^+ \hspace{-.04in}\mu)_{t_0})\cap \big(\pi((\spt \mu)_{t_0})\setminus \spt \mu_{t_0}\big)\big) = 0\end{equation}
Combining (\ref{equation tmult(t) first restriction}), (\ref{equation tmult(t) second restriction}), and (\ref{equation tmult third restriction}),
$$\cH^n(\pi((\sing^+ \hspace{-.04in}\mu)_{t_0}))=0$$
for each $t_0>0$. Since $(\sing^+ \hspace{-.04in}\mu)_{t_0}$ is contained in the time slice $\{t=t_0\}$,
$$\cH^n((\sing^+\hspace{-.04in}\mu)_{t_0}) = \cH^n(\pi((\sing^+ \hspace{-.04in}\mu)_{t_0})) = 0$$
which concludes the theorem.
\end{proof}

Let $\{\mu_t\}_{t \geq 0}$ be an integral Brakke flow with bounded area ratios. 
Recall that for the Brakke flow $\mu_t$ and each $t>0$, we define
$$\cT_{\mult}(t) := \{x \in \bR^{n+k}\,|\,\exists \,\text{a planar tangent flow at }(x,t)\, \text{with multiplicity $\geq 2$}\}$$
where the tangent flows are of the flow $\mu_t$. 
For the Brakke flow $\mu_t$ and each $t>0$, we define
\begin{align*}\label{equation Tmult(t) definition}
\cT_{\mult}^+(t) := \cT_{\mult}(t) \cup \{x \in \bR^{n+k}\,|\,\exists \,\text{a quasistatic planar tangent flow at }(x,t)\}
\end{align*}
We are defining $\cT_{\mult}^+(t)$ to be $\cT_{\mult}(t)$ together with each $x \in \bR^{n+k}$ such that there is a tangent flow at $(x,t)$ which is a quasistatic plane. Since $\cT_{\mult}(t)$ already contains all $x$ such that there is a tangent flow at $(x,t)$ which is a quasistatic plane of multiplicity $\geq 2$, if $x \in \cT_{\mult}^+(t)\setminus \cT_{\mult}(t)$, then a tangent flow at $(x,t)$ is a quasistatic multiplicity one plane.

In order to relate the structure of tangent flows to partial regularity, we will use a stratification theorem of Brian White~\cite[Theorem 9, Table 2]{Wh97} (cf. \cite{CheegerHaslhoferNaber13}). The point is that for each time slice, we can bound the Hausdorff dimension of all points which only have nonplanar tangent flows. A planar tangent flow is any tangent flow which is a static or quasistatic plane of any positive integer multiplicity. White's stratification theorem implies the next theorem, Theorem \ref{theorem white stratification}.

\begin{thm}[{\cite[Thm.\ 9]{Wh97}}]\label{theorem white stratification}
Let $\{\mu_t\}_{t \geq 0}$ be an integral $n$-dimensional Brakke flow in $\bR^{n+k}$ with bounded area ratios. For $t>0$, define 
$$\fM(t):= \{x \in \bR^{n+k}\,|\,\text{no tangent flow at }(x,t)\,\text{is planar}\}$$
Then, for each $t>0$, the Hausdorff dimension of $\fM(t)$ is less than or equal to $n-1$.
\end{thm}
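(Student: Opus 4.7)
The plan is to derive this bound directly from White's stratification theorem~\cite{Wh97}. White stratifies the spacetime support of a Brakke flow by the spatial translational symmetry of tangent flows: for $j \in \{0, 1, \dots, n\}$, let $\cS^{j}$ denote the set of spacetime points $X$ such that no tangent flow at $X$ is invariant under translation in more than $j$ linearly independent spatial directions (equivalently, splits off more than $j$ Euclidean factors). White's theorem gives $\cS^{j}$ parabolic Hausdorff dimension at most $j$ in spacetime, and since parabolic and Euclidean Hausdorff dimension coincide when restricted to a fixed time slice, this yields $\dim_{\cH}(\cS^{j} \cap \{\tau = t\}) \leq j$ for every $t>0$.

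The content to verify is that a tangent flow of $\{\mu_t\}$ is planar in the sense of Section~\ref{section singularities mean curvature flow} precisely when it is invariant under an $n$-dimensional group of spatial translations, so that $\fM(t) \times \{t\} \subseteq \cS^{n-1}$. A static or quasistatic multiplicity $m$ plane is trivially $n$-translation invariant. Conversely, if a tangent flow $\mu'_{s}$ is invariant under an $n$-dimensional group of spatial translations, then each time slice of $\mu'_s$ is a translation-invariant integer rectifiable varifold; the constancy theorem for integral varifolds with bounded generalized mean curvature and smooth support~\cite[Lemma A.1]{BellettiniWickramasekera18}, combined with self-similarity for $s<0$ and the fact that the origin lies in $\spt \mu'$, forces $\mu'_s$ to equal a fixed multiplicity $m$ hyperplane through $0$ for $s<0$. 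For $s \geq 0$, $\mu'_s$ either continues as the same plane (static case) or vanishes (quasistatic case), so the tangent flow is planar in our sense either way.

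Combining these two observations, we conclude
$$\dim_{\cH}\fM(t) \leq \dim_{\cH}(\cS^{n-1} \cap \{\tau = t\}) \leq n-1.$$
The main (mild) obstacle is the symmetry-characterization of planar tangent flows above, since one must carefully rule out translation-invariant integer rectifiable shrinkers that are not single planes; the passage from White's parabolic stratification bound to the Euclidean dimension bound on a single time slice is routine, as time is fixed and parabolic rescaling restricted to that slice is ordinary rescaling.
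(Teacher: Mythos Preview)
Your approach is correct and matches the paper's: the paper itself gives no proof, stating only that ``White's stratification theorem implies the next theorem'' and citing~\cite[Theorem 9, Table 2]{Wh97}. You have supplied the derivation that the paper leaves implicit.

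One small imprecision worth noting: your intermediate claim that the strata $\cS^{j}$ (defined via \emph{spatial} translation invariance alone) have parabolic Hausdorff dimension at most $j$ in spacetime is not exactly what White proves, since White's spine includes time translations and a point with a static but non-planar tangent flow (e.g.\ a stationary triple junction $\bR^{n-1}\times Y$) would lie in your $\cS^{n-1}$ while having spine of parabolic dimension $n+1$. However, this does not affect your conclusion: White's Table 2 directly gives the Euclidean Hausdorff dimension bound on each fixed time slice, which is governed by the spatial part of the spine alone, and your characterization that an $n$-spatially-translation-invariant tangent flow must be a (static or quasistatic) plane is correct for the reasons you give. So the argument goes through once you cite the time-slice bound from White's table rather than passing through the spacetime parabolic bound.
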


Using Theorem \ref{theorem white stratification}, we put together all the work of this section in the next theorem, Theorem \ref{theorem main result without unit regularity}, which is the antecedent to Theorem \ref{theorem main result}.

\begin{thm}\label{theorem main result without unit regularity}
Let $T>0$, and let $\{\mu_t\}_{t \geq 0}$ be an integral $n$-dimensional Brakke flow in $\bR^{n+k}$ with bounded area ratios. Consider the following conditions:
\begin{enumerate}
    \item\label{condition 1'} For each $t \in (0,T)$, $\cH^n(\cT^+_{\mult}(t))=0$,
    \item\label{condition 2'} For each $t \in (0,T)$, $\cH^n((\sing^+\hspace{-.04in}\mu)_t)=0$,
    \item\label{condition 3'} For each $\phi \in C^2_c(\bR^{n+k}, \bR_{\geq 0})$, $t \mapsto \mu_t(\phi)$ is continuous for $t \in (0,T)$.
\end{enumerate}
Then, (\ref{condition 1'}) is equivalent to (\ref{condition 2'}), and (\ref{condition 2'}) implies (\ref{condition 3'}). 

In addition, if $\mu_t$ is unit density, then conditions (\ref{condition 1'}), (\ref{condition 2'}), and (\ref{condition 3'}) are all equivalent.
\end{thm}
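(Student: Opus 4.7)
The plan is to chain together the partial results of this section. The implication $(\ref{condition 2'}) \Rightarrow (\ref{condition 3'})$ is exactly Theorem \ref{theorem equivalence of mass continuity, codim 2}, which assumes only bounded area ratios, while $(\ref{condition 3'}) \Rightarrow (\ref{condition 2'})$ under the additional unit density assumption is Theorem \ref{theorem mass continuity implies mult one}. Hence the only genuinely new work is the equivalence $(\ref{condition 1'}) \iff (\ref{condition 2'})$, which I would deduce by combining the characterization of fully smooth points as those admitting a static multiplicity one planar tangent flow (Remark \ref{remark mult 1 static plane is fully smooth}) with White's stratification theorem (Theorem \ref{theorem white stratification}).

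Fix $t \in (0,T)$ and let $\pi \colon \bR^{n+k} \times \bR_{\geq 0} \to \bR^{n+k}$ denote spatial projection; since $(\sing^+\hspace{-.04in}\mu)_t$ lies in the slice $\{t\}$, $\pi$ restricts to a bijection on this set that preserves $\cH^n$ measure. For the direction $(\ref{condition 2'}) \Rightarrow (\ref{condition 1'})$, I would verify the containment $\cT^+_{\mult}(t) \subseteq \pi((\sing^+\hspace{-.04in}\mu)_t)$. Any $x \in \cT^+_{\mult}(t)$ admits a planar tangent flow at $(x,t)$ of density at least one through the origin, so $\Theta(x,t) \geq 1$ and $(x,t) \in \spt \mu$. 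This planar tangent flow is not a static multiplicity one plane, and by uniqueness of static multiplicity one planar tangent flows no tangent flow at $(x,t)$ can be a static multiplicity one plane; Remark \ref{remark mult 1 static plane is fully smooth} then forces $(x,t) \in (\sing^+\hspace{-.04in}\mu)_t$.

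For the direction $(\ref{condition 1'}) \Rightarrow (\ref{condition 2'})$, suppose $x \in \pi((\sing^+\hspace{-.04in}\mu)_t)$, so $(x,t) \in \spt \mu$ is not fully smooth. By Remark \ref{remark mult 1 static plane is fully smooth}, no tangent flow at $(x,t)$ is a static multiplicity one plane. Therefore either every tangent flow at $(x,t)$ is nonplanar, placing $x$ in the set $\fM(t)$ from Theorem \ref{theorem white stratification}, or some tangent flow is planar of multiplicity $\geq 2$ or quasistatic multiplicity one, placing $x \in \cT^+_{\mult}(t)$. This yields
\[
\pi((\sing^+\hspace{-.04in}\mu)_t) \subseteq \cT^+_{\mult}(t) \cup \fM(t),
\]
and since $\fM(t)$ has Hausdorff dimension at most $n-1$ by Theorem \ref{theorem white stratification}, $\cH^n(\fM(t)) = 0$, so assumption $(\ref{condition 1'})$ together with the measure-preserving property of $\pi$ on the slice $\{t\}$ yields $(\ref{condition 2'})$.

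The only subtlety I anticipate is the uniqueness fact used above for static multiplicity one planar tangent flows, but this is standard: the existence of such a tangent flow at $(x,t)$ triggers Brakke regularity, yielding local smoothness in a spacetime neighborhood of $(x,t)$, after which every parabolic rescaling of $\mu$ about $(x,t)$ converges to the same static plane. With this in hand, the chain $(\ref{condition 1'}) \iff (\ref{condition 2'}) \Rightarrow (\ref{condition 3'})$ holds under only bounded area ratios, and the final implication $(\ref{condition 3'}) \Rightarrow (\ref{condition 2'})$ under the unit density hypothesis closes the full equivalence cycle via Theorem \ref{theorem mass continuity implies mult one}.
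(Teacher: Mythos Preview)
Your proposal is correct and follows essentially the same approach as the paper: invoking Theorems \ref{theorem equivalence of mass continuity, codim 2} and \ref{theorem mass continuity implies mult one} for the implications between (\ref{condition 2'}) and (\ref{condition 3'}), and establishing (\ref{condition 1'}) $\iff$ (\ref{condition 2'}) via Remark \ref{remark mult 1 static plane is fully smooth} and White's stratification Theorem \ref{theorem white stratification}. The paper phrases the key step as an exact disjoint decomposition $\fM(t) \sqcup \cT^+_{\mult}(t) = \pi((\sing^+\hspace{-.04in}\mu)_t)$ and then reads off the equality of measures, whereas you verify the two containments separately; this is a cosmetic difference only.
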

\begin{proof}
Theorem \ref{theorem equivalence of mass continuity, codim 2} says that condition (\ref{condition 2'}) implies condition (\ref{condition 3'}). We will now see that conditions (\ref{condition 1'}) and (\ref{condition 2'}) are equivalent. 

Let $\fM(t)$ be as in Theorem \ref{theorem white stratification}. Recall that $(\sing^+\hspace{-.04in}\mu)_t$ consists of all $(x,t)\in (\spt \mu)_t$ such that $(x,t)$ is not fully smooth. By Remark \ref{remark mult 1 static plane is fully smooth} and uniqueness of static multiplicity one planar tangent flows, $(\sing^+\hspace{-.04in}\mu)_t$ consists of all $(x,t)\in (\spt \mu)_t$ such that there is a tangent flow at $(x,t)$ which is not a static multiplicity one plane. By definition, 
$$\fM(t) \sqcup \cT_{\mult}^+(t) = \pi((\sing^+ \hspace{-.04in}\mu)_t)$$
where $\pi(x,t)=x$. Since $(\sing^+\hspace{-.04in}\mu)_t$ is contained in $\bR^{n+k}\times \{t\}$, $\cH^n((\sing^+\hspace{-.04in}\mu)_t) = \cH^n(\pi((\sing^+ \hspace{-.04in}\mu)_t))$. Using the fact that $\fM(t)$ and $\cT^+_{\mult}(t)$ are disjoint,
$$\cH^n(\fM(t)) + \cH^n(\cT_{\mult}^+(t)) = \cH^n((\sing^+ \hspace{-.04in}\mu)_t)$$
By Theorem \ref{theorem white stratification}, $\cH^n(\fM(t)) = 0$ for each $t>0$, so 
$$\cH^n(\cT_{\mult}^+(t)) = \cH^n((\sing^+ \hspace{-.04in}\mu)_t)$$
Thus, conditions (\ref{condition 1'}) and (\ref{condition 2'}) are equivalent.

Now, if $\mu_t$ is unit density, then Theorem \ref{theorem mass continuity implies mult one} says that (\ref{condition 3'}) implies (\ref{condition 2'}). Given that (\ref{condition 1'}) is equivalent to (\ref{condition 2'}), we find that (\ref{condition 1'}), (\ref{condition 2'}), and (\ref{condition 3'}) are all equivalent.
\end{proof}

\begin{rmk}\label{remark unit density unnecessary}
Conditions (\ref{condition 1'}) and (\ref{condition 2'}) of Theorem \ref{theorem main result without unit regularity} have built-in unit density assumptions. Both these conditions are equivalent to the statement that $\cH^n$-a.e.\ point of $(\spt \mu)_t$ has a static multiplicity one plane as a tangent flow. By Remark \ref{remark mult 1 static plane is fully smooth}, the flow is, in particular, unit density in a neighborhood of $\cH^n$-a.e.\ point of $\spt \mu_t$. So, this means that $\mu_t$ is unit density.
%We note that the unit density assumption in Theorem \ref{theorem main result} is only used in Theorem \ref{theorem mass continuity implies mult one} to prove that condition (\ref{condition 3}) implies condition (\ref{condition 1}). Indeed, a unit density assumption is built into condition (\ref{condition 2}) automatically (see Remark \ref{remark unit density immanent}) by the definition of fully smooth points. Also, condition (\ref{condition 1}) implies condition (\ref{condition 2}) without the need for a unit density assumption. Indeed, using unit regularity, the $\cH^n$-a.e.\ multiplicity one conjecture implies that $\cH^n$-a.e.\ point of $(\spt \mu)_t$ has a static multiplicity one plane as a tangent flow. By Remark \ref{remark mult 1 static plane is fully smooth}, the flow is, in particular, unit density in a neighborhood of $\cH^n$-a.e.\ point of $\spt \mu_t$. So, condition (\ref{condition 1}) implies that $\mu_t$ is unit density.
\end{rmk}

\subsection{Proof of Theorem \ref{theorem main result}}\hfill\\
\vspace{-.2in}

By Theorems \ref{theorem equivalence of mass continuity, codim 2} and \ref{theorem mass continuity implies mult one}, conditions (\ref{condition 2}) and (\ref{condition 3}) are equivalent. So, we only need to show that conditions (\ref{condition 1}) and (\ref{condition 2}) are equivalent.

By the discussion before Theorem \ref{theorem white stratification}, if $x \in \cT^+_{\mult}(t)\setminus \cT_{\mult}(t)$, then there is a quasistatic multiplicity one tangent flow of $\mu_t$ at $(x,t)$. Since $\mu_t$ is unit regular, there are no quasistatic multiplicity one tangent flows and so 
$$\cT^+_{\mult}(t) = \cT_{\mult}(t)$$
In particular, $\cH^n(\cT^+_{\mult}(t)) = 0$ if and only if $\cH^n(\cT_{\mult}(t)) = 0$. By Theorem \ref{theorem main result without unit regularity}, $\cH^n(\cT^+_{\mult}(t)) = 0$ if and only if $\cH^n((\sing^+ \hspace{-.04in}\mu)_t) = 0$, so condition (\ref{condition 1}) is equivalent to condition (\ref{condition 2}). This completes the proof of Theorem \ref{theorem main result}.

\subsection{Proof of Corollary \ref{corollary general assumption no mass drop}}\hfill\\
\vspace{-.2in}

%\noindent \textbf{Level set flow assumption:}\hfill\\
We assume that $(M_0, \mu_t)$ satisfies the General Assumption for 
$$t< T:= \min(T_{\disc}, T_{\mult})$$
%We prove this theorem for $t<T = \infty$. The same proof works for $t<T < \infty$.

By Theorem \ref{theorem huisken monotonicity}, $\mu_t$ has bounded area ratios, since it has a smooth closed embedded connected initial condition $M_0$. Also, $\mu_t$ is a boundary motion and is unit density for each time.

By Proposition \ref{proposition no quasistatic mult 1 planes}, there are no quasistatic multiplicity one planar tangent flows of $\mu_t$ at each spacetime point $X = (x,t)$, $t \in (0,T)$. In other words, $\mu_t$ is unit regular. Also, by definition of $T_{\mult}$, there are no static or quasistatic planar tangent flows of multiplicity $\geq 2$ at $X$. So, for each $t \in (0, T)$,
$$\cH^n(\cT_{\mult}(t))=0$$

Thus, $\mu_t$ is unit density, is unit regular, has bounded area ratios, and satisfies condition (1) of Theorem \ref{theorem main result}. Theorem \ref{theorem main result} implies that $t \mapsto \mu_t(\phi)$ is continuous for $t\in (0, T)$ and each $\phi \in C^2_c(\bR^{n+1}, \bR_{\geq 0})$. Since $M_0$ is smooth, $\spt \mu_t$ will coincide with the smooth level set flow for short time. By Lemma \ref{lemma restarting with unit density}, $\mu_t$ will be a unit density smooth flow for short time. This implies that $t \mapsto \mu_t(\phi)$ is also continuous at $t=0$.

%\noindent \textbf{Brakke flow assumption:}\hfill\\
%A similar proof works for the case that $\mu_t$ is a unit regular Brakke flow with bounded area ratios and $T = T_{\mult}$. The unit regular assumption rules out quasistatic multiplicity one planes as tangent flows, and the $T = T_{\mult}$ assumption rules out higher multiplicity planes. Using Theorem \ref{theorem white stratification} and applying Theorem \ref{theorem equivalence of mass continuity, codim 2}, we conclude the theorem.
%\subsection{Proof of Corollary \ref{corollary mean convex neighborhoods of singularities}}\hfill\\
%\vspace{-.2in}

\subsection{Proof of Corollary \ref{corollary mean convex neighborhoods of singularities}}\hfill\\
\vspace{-.2in}

\noindent\textbf{Mean convex neighborhoods:}

Assume that the level set flow of $M_0$ has mean convex neighborhoods of singularities. By work of Hershkovits-White~\cite{HershkovitsWhite17}, $T_{\disc} = \infty$. Since it is nonfattening, we may associate the unit density Brakke flow $\mu_t$ to this level set flow. White's regularity theory of mean convex flows applies locally~\cite{Wh00}~\cite[Remark 5]{HershkovitsWhite17}, so there are no tangent flows at $X = (x,t)$, $t>0$, which are planes of multiplicity $\geq 2$ (see Proposition \ref{proposition no Tmult for lsf with cylindrical singularities}). Nor are there quasistatic multiplicity one planes as tangent flows by Proposition \ref{proposition no quasistatic mult 1 planes}. So, $T_{\disc} = T_{\mult} = \infty$.

We apply Corollary \ref{corollary general assumption no mass drop} to find that $t \mapsto \mu_t(\phi)$ is continuous for $t \in [0, \infty)$. %Continuity at $t=0$ follows from the smoothness assumption on the initial condition.

\noindent\textbf{Generic flow:}

Assume that $M_0$ is a generic closed hypersurface. By Proposition \ref{proposition general assumption is generic}, $T_{\disc} = T_{\fat} = \infty$. We then apply Corollary \ref{corollary general assumption no mass drop}, as above, until time $T = T_{\mult}$.

\subsection{Integral Brakke Equality}\label{section applications of converging BFEs}\hfill\\
\vspace{-.2in}

We will now upgrade the mass continuity result of Theorem \ref{theorem main result} and Corollary \ref{corollary general assumption no mass drop} with countability assumptions on the set of singular times. We show that the mass is absolutely continuous with this assumption on the singular times. This will lead to the proofs of Theorem \ref{theorem main assumption BFE before tau} and Corollary \ref{corollary two convex three convex BFE}.

\begin{lem}\label{lemma absolutely continuous mass}
Suppose that one of the following assumptions holds:
\begin{itemize}
    \item $(M_0, \mu_t)$ satisfies the General Assumption for $t<T= \min(T_{\disc}, T_{\mult})$, the level set flow of $M_0$ has an open cocountable set of regular times for $t<T$, and $k=1$,
  \item $\mu_t$ is an integral $n$-dimensional Brakke flow in $\bR^{n+k}$ which is unit regular, has bounded area ratios, and has a countable set of singular times for $t<T= T_{\mult}$.
\end{itemize}
Then, for each $\phi \in C^2_c(\bR^{n+k}, \bR_{\geq 0})$, $t\mapsto \mu_t(\phi)$ is absolutely continuous for $t \in (0, T)$.
\end{lem}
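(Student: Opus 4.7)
The plan is to set $f(t) := \mu_t(\phi)$ and verify the hypotheses of the Banach--Zaretskii theorem in its form for functions differentiable outside a countable set: $f$ is continuous, of bounded variation, differentiable off a countable subset of $(0,T)$, and this derivative is Lebesgue integrable. Any such function is absolutely continuous, which is the conclusion of the lemma.

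First I would establish continuity of $f$ on $(0,T)$. Under the first assumption this is immediate from Corollary \ref{corollary general assumption no mass drop}. Under the second assumption, $T = T_{\mult}$ forces $\cT_{\mult}(t) = \emptyset$ for $0 < t < T$, while unit regularity rules out any $x \in \cT_{\mult}^+(t) \setminus \cT_{\mult}(t)$, as such an $x$ would correspond to a quasistatic multiplicity one planar tangent flow. Thus the first condition in Theorem \ref{theorem main result without unit regularity} (namely $\cH^n(\cT_{\mult}^+(t)) = 0$) holds vacuously, and the continuity conclusion of that theorem yields continuity of $f$.

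Next, Lemma \ref{lemma properties of BF} supplies a constant $C(\phi)$ with $t \mapsto f(t) - C(\phi)\,t$ nonincreasing, so on every compact $[a,b] \subset (0,T)$, $f$ is a sum of a monotone and an affine function, hence of bounded variation. Proposition \ref{proposition conjecture C holds for countable singular times} (whose first-case hypotheses are satisfied by restricting to the time interval $[0, \min(T_{\disc}, T_{\mult}))$) then ensures that $f'(t)$ exists and equals the Brakke variation $\cB(\mu_t, \phi)$ outside a countable subset of $[0,T)$. The integrability of $t \mapsto \cB(\mu_t, \phi)$ on $[a,b]$ is routine: bound the Brakke variation pointwise by $\int \phi |\vec{H}_{\mu_t}|^2 \, d\mu_t$ plus a cross term controlled by Cauchy--Schwarz against an auxiliary cutoff $\psi \in C^2_c$ with $\psi \equiv 1$ on $\supp \phi$, then invoke the standard consequence of the Brakke inequality that $\int_a^b \int \psi |\vec{H}_{\mu_s}|^2 \, d\mu_s \, ds < \infty$ together with bounded area ratios.

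With continuity, bounded variation, a countable exceptional set of non-differentiability, and an integrable derivative, the Banach--Zaretskii theorem yields absolute continuity of $f$ on each compact $[a,b] \subset (0,T)$, and hence on $(0,T)$. The main subtle point is the invocation of Banach--Zaretskii in its countable-exceptional-set form; its nontrivial hypothesis, the Luzin $(N)$ property, follows from the classical observation that a continuous function with a finite derivative off a countable set maps null sets to null sets -- decompose the differentiability set by level sets of $|f'|$, on each of which $f$ is locally Lipschitz and hence preserves null sets, and note that the exceptional countable set has a countable, and therefore null, image by continuity.
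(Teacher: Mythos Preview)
Your proposal is correct and follows essentially the same route as the paper: verify continuity (via Corollary~\ref{corollary general assumption no mass drop} or Theorem~\ref{theorem main result without unit regularity}), bounded variation (via Lemma~\ref{lemma properties of BF}), and the Luzin~$N$ property (via differentiability off a countable set, using Proposition~\ref{proposition conjecture C holds for countable singular times}), then invoke Banach--Zaretskii on compact subintervals. The only difference is that you additionally verify $L^1$-integrability of the derivative, which is not needed for Banach--Zaretskii in the form the paper uses (continuity, BV, and Luzin~$N$ suffice), and is in any case automatic for a BV function.
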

\begin{proof} 
Fix $\phi \in C^2_c(\bR^{n+k}, \bR_{\geq 0})$. We will verify that $t \mapsto \mu_t(\phi)$ is absolutely continuous using the Banach-Zaretskii theorem~\cite[p. 232]{Nat74}. This theorem says that a function $f: [a,b] \to \bR$ is absolutely continuous if and only if $f$ is continuous, has bounded variation, and maps measure zero sets to measure zero sets. The last condition is also known as the Luzin N property. 

We will show that $t \mapsto \mu_t(\phi)$ satisfies the conditions of the Banach-Zaretskii theorem for each interval $[\eps,T - \eps]$, $\eps >0$. 

\noindent \textbf{Continuity:}

Under the level set flow assumption, Corollary \ref{corollary general assumption no mass drop} is satisfied, so $t \mapsto \mu_t(\phi)$ is continuous for $t \in (0, T)$. 

By the definition of $T_{\mult}$ and the fact that $\mu_t$ is unit regular, we have that $\cH^n(\cT_{\mult}^+(t)) = 0$ for each $t \in (0,T)$. Under the Brakke flow assumption, the assumptions of Theorem \ref{theorem main result without unit regularity} are satisfied. So, $t \mapsto \mu_t(\phi)$ is continuous for $t \in (0, T)$. 

Under either assumption, we have in particular that $t \mapsto \mu_t(\phi)$ is continuous on $[\eps,T - \eps]$.

\noindent \textbf{Bounded variation:}

By Lemma \ref{lemma properties of BF}, $\mu_t(\phi) - C(\phi)t$ is nonincreasing. Any flow with bounded area ratios has locally bounded mass, so $\mu_t$ has locally bounded mass. This implies $\mu_t(\phi) - C(\phi)t$ is bounded on $[0, T)$. Also, $C(\phi)t$ is increasing and bounded on $[0, T)$. Thus, we may represent $t \mapsto \mu_t(\phi)$ as the difference of bounded non-decreasing functions on $[\eps, T - \eps]$. By the Jordan decomposition for bounded variation functions, $t \mapsto \mu_t(\phi)$ has bounded variation on $[\eps, T - \eps]$.

\noindent \textbf{Luzin N property:}

A function $f: [a,b] \to \bR$ satisfies the Luzin N property if for all $N \subseteq [a,b]$ with $\cH^1(N) = 0$, we have that $\cH^1(f(N)) = 0$. By \cite[Lemma 7.25]{Rud87}, differentiable functions have this property. Functions differentiable at all but countably many times have this property also, since countable sets are always mapped to countable sets. By Proposition \ref{proposition conjecture C holds for countable singular times}, the assumptions of this lemma imply that $t \mapsto \mu_t(\phi)$ is differentiable for all but countably many times in $[0, T)$. So, $t \mapsto \mu_t(\phi)$ satisfies the Luzin N property on $[\eps, T - \eps]$.

We have verified all the conditions needed to apply the Banach-Zaretskii theorem, so $t \mapsto \mu_t(\phi)$ is absolutely continuous for $t \in [\eps, T - \eps]$. Since $\eps>0$ was arbitrary, we conclude the lemma. 
\end{proof}
%\vspace{-.19in}
\subsection{Proof of Theorem \ref{theorem main assumption BFE before tau}}\hfill\\
\vspace{-.2in}

\noindent\textbf{Level set flow assumption:}

Assume that $(M_0, \mu_t)$ satisfies the General Assumption for $t<T:= \min(T_{\disc}, T_{\mult})$ and the level set flow of $M_0$ has an open cocountable set of regular times.

By Lemma \ref{lemma absolutely continuous mass}, $t \mapsto \mu_t(\phi)$ is absolutely continuous for each $\phi \in C^2_c(\bR^{n+1}, \bR_{\geq 0})$ and for $t \in (0,T)$. Also, recall that $\mu_t$ satisfies Conjecture \ref{Conjecture C} by Proposition \ref{proposition conjecture C holds for countable singular times}. So for a.e.\ $t \geq 0$, $\frac{d}{dt}\mu_t(\phi) = \cB(\mu_t, \phi)$. Then, for $0 < s,t < T$, applying absolute continuity,
$$\mu_t(\phi) - \mu_s(\phi) = \int_{s}^t \frac{d}{ds'}\mu_{s'}(\phi)\,ds' = \int \cB(\mu_{s'}, \phi)\,ds'$$
We conclude that $\mu_t$ satisfies Conjecture \ref{Conjecture D}, equality in the integral Brakke inequality, for $0 < s,t<T$. In fact, since $M_0$ is initially smooth and closed, the Brakke inequality holds for $0\leq s,t <T$.

\noindent\textbf{Brakke flow assumption:}

Assume that $\mu_t$ is an integral Brakke flow which is unit regular, has bounded area ratios, and has a countable set of singular times for $t<T= T_{\mult}$. The same proof as above works by applying Lemma \ref{lemma absolutely continuous mass} and Proposition \ref{proposition conjecture C holds for countable singular times}.

\subsection{Proof of Corollary \ref{corollary two convex three convex BFE}}\hfill\\
\vspace{-.2in}

\noindent \textbf{Spherical singularities and neckpinches:}

Assume the level set flow of $M_0$ has only spherical singularities and neckpinches. By Choi-Haslhofer-Hershkovits-White~\cite{ChoiHaslhoferHershWhite19}, the level set flow of $M_0$ satisfies $T_{\disc} = \infty$. By Proposition \ref{proposition no Tmult for lsf with cylindrical singularities} and the definition of spherical singularities and neckpinches of a level set flow, the only tangent flows of $\mu_t$ are cylinders or static multiplicity one planes. In particular, $T_{\mult} = \infty$. Therefore, $\min(T_{\disc}, T_{\mult}) = \infty$. Proposition \ref{proposition examples of main assumption} then implies that $(M_0, \mu_t)$ satisfies the General Assumption for $t < \infty$ and the level set flow of $M_0$ has an open cocountable set of regular times for $t<\infty$. We then apply Theorem \ref{theorem main assumption BFE before tau} to find that $\mu_t$ satisfies Conjecture \ref{Conjecture D}, the integral Brakke equality, for $0 \leq s,t <\infty$.
\newpage
\noindent \textbf{Three-convex blow-up type:}

Assume the level set flow of $M_0$ has three-convex blow-up type. Proposition \ref{proposition examples of main assumption} implies that $(M_0, \mu_t)$ satisfies the General Assumption for $t<T_{\disc}$ and the level set flow of $M_0$ has an open cocountable set of regular times for $t<T_{\disc}$. By Proposition \ref{proposition no Tmult for lsf with cylindrical singularities} and the definition of three-convex blow-up type for the level set flow, the only tangent flows of $\mu_t$ for $t< T_{\disc}$ are static multiplicity one planes and round shrinking cylinders. In particular, $T_{\disc}\leq T_{\mult}$. An application of Theorem \ref{theorem main assumption BFE before tau} implies that $\mu_t$ satisfies Conjecture \ref{Conjecture D}, the integral Brakke equality, for $0 \leq s,t < \min(T_{\disc}, T_{\mult}) = T_{\disc}$.

\noindent \textbf{Mean convex neighborhoods in low dimensions:}

Assume the level set flow of $M_0$ has mean convex neighborhoods of singularities in $\bR^{n+1}$, $n+1 \leq 4$. By work of Hershkovits-White~\cite{Wh00}~\cite[Remark 5]{HershkovitsWhite17}, $T_{\disc} = \infty$ and $(M_0, \mu_t)$ satisfies the General Assumption for $t < \infty$. Since the flow has mean convex neighborhoods of singularities, the blow-ups are the same as that of mean convex flow, since White's theory works locally~\cite{Wh00}. The dimension bound implies the level set flow of $M_0$ has three-convex blow-up type for $t \in [0, \infty)$. We may conclude as in the argument above for three-convex blow-up type, which proves that $\mu_t$ satisfies Conjecture \ref{Conjecture D}, the integral Brakke equality, for $0 \leq s,t < \infty$.

\subsection{Proof of Corollary \ref{corollary limit flows of BFEs}}\hfill\\
\vspace{-.2in}

In order to prove Corollary \ref{corollary limit flows of BFEs}, we begin with a lemma on limit flows. This lemma states that the class of limits flows of a Brakke flow is closed under the operation of taking limit flows. This lemma is invoked in the proof of~\cite[Theorem 12.2]{Wh00}, but we provide a proof here. Recall that $\tau$ is the time function for the Brakke flow.

\begin{lem}\label{lemma limit flow of limit flow is limit flow}
Let $\{\mu_t\}_{t \geq 0}$ be an integral Brakke flow with bounded area ratios. Let $\mu'_t$ be a limit flow of $\mu_t$ at the spacetime point $X$, $\tau(X)>0$. 

If $\mu''_t$ is a limit flow of $\mu'_t$ at any spacetime point $Y$, then $\mu''_t$ is a limit flow of $\mu_t$ at $X$.
\end{lem}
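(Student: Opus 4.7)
The plan is to exhibit $\mu''$ directly as a parabolic rescaling limit of $\mu$ itself, by composing the two families of rescalings defining $\mu'$ and $\mu''$. Write $X=(x,s)$ and $Y=(y,r)$, and fix approximating sequences $X_i=(x_i,s_i)\to X$, $\la_i\to\infty$ with $\mu_t^{\la_i,X_i}\to\mu'_t$, together with $Y_j=(y_j,r_j)\to Y$, $\beta_j\to\infty$ with $(\mu')_t^{\beta_j,Y_j}\to\mu''_t$. A direct unpacking of the definition of $\nu^{\la,X}$ from Section \ref{section singularities mean curvature flow} yields the composition identity
\[
\bigl(\mu^{\la_i,X_i}\bigr)_t^{\beta_j,Y_j}(E)
=(\la_i\beta_j)^n\,\mu_{(\la_i\beta_j)^{-2}t+\la_i^{-2}r_j+s_i}\!\bigl((\la_i\beta_j)^{-1}E+\la_i^{-1}y_j+x_i\bigr)
=\mu_t^{\la_i\beta_j,\,X'_{i,j}}(E),
\]
with $X'_{i,j}:=(\la_i^{-1}y_j+x_i,\,\la_i^{-2}r_j+s_i)$. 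In other words, iterated parabolic rescalings are a single parabolic rescaling with combined scale $\la_i\beta_j$ and recentered basepoint $X'_{i,j}$. Since $s=\tau(X)>0$, we have $\tau(X'_{i,j})>0$ for all large $i$, so the right-hand side is a legitimate rescaling of the Brakke flow $\mu$.

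Fix $j$ and let $i\to\infty$. The parabolic rescaling $\nu\mapsto\nu^{\beta_j,Y_j}$ is continuous in the Brakke-flow convergence of Section \ref{section singularities mean curvature flow}: for each $\phi\in C_c$, $\nu_t^{\beta_j,Y_j}(\phi)=\beta_j^n\int\phi(\beta_j(y'-y_j))\,d\nu_{\beta_j^{-2}t+r_j}(y')$ depends continuously on the Radon measures $\nu_{s}$, and likewise for the associated varifolds. So the identity above yields $\mu^{\la_i\beta_j,X'_{i,j}}\to(\mu')^{\beta_j,Y_j}$ as $i\to\infty$. Combined with $(\mu')^{\beta_j,Y_j}\to\mu''$ as $j\to\infty$, a diagonal selection produces $i(j)\to\infty$, growing fast enough that $\la_{i(j)}\beta_j\to\infty$ and $X'_{i(j),j}\to X$, with $\mu^{\la_{i(j)}\beta_j,X'_{i(j),j}}\to\mu''$. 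This exhibits $\mu''$ as a limit flow of $\mu$ at $X$.

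The delicate step is making the diagonal argument precise in the Brakke-flow convergence of Section \ref{section singularities mean curvature flow}, which demands Radon-measure convergence at every time plus varifold convergence at almost every time, and is not \emph{a priori} metrizable. Since $\mu$ has bounded area ratios, every rescaled flow $\mu^{\la,X'}$ enjoys the same uniform mass bound on compact subsets of spacetime; hence Brakke's compactness theorem guarantees that any subsequence of $\mu^{\la_{i(j)}\beta_j,X'_{i(j),j}}$ admits a further subsequence converging, in the full Brakke-flow sense, to some integral Brakke flow. Testing against a countable dense family of pairs $(t,\phi)$ and invoking the two-step convergence above together with the one-sided mass continuity of Lemma \ref{lemma properties of BF}, any such subsequential limit necessarily coincides with $\mu''$; therefore the whole diagonal sequence converges to $\mu''$, completing the proof.
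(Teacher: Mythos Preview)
Your proposal is correct and follows essentially the same approach as the paper: both establish the composition identity for iterated parabolic rescalings, compute the iterated limit $\lim_j\lim_i$ via the two given convergences, and then use Brakke compactness to identify the resulting limit flow with $\mu''$. The only cosmetic difference is in the final identification step---the paper invokes a double-limit lemma after extracting a single Brakke-convergent subsequence, whereas you phrase it as a diagonal selection followed by a subsequential-uniqueness argument via density and Lemma~\ref{lemma properties of BF}; both are handling the same technical point and both are slightly informal in the passage from agreement on a countable family to agreement at every time.
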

\begin{proof}
By assumption, there exists a sequence $\la_i \to \infty$ and $(x_i, t_i) = X_i \to X = (x,t)$ such that
\begin{equation}\label{equation convergence of limit flow}
\mu_t^{\la_i, X_i}(E):=\la_i^n \mu_{\la_i^{-2}t + t_i}(\la_i^{-1}E+x_i)\end{equation}
converges as Brakke flows to the limit flow $\mu'_t$. Since $\mu_t$ has bounded area ratios, $\mu'_t$ also has bounded area ratios. By construction $\mu'_t$ is defined for $t \in (-\infty, \infty)$. We may then take a limit flow $\mu''_t$ of $\mu'_t$ at any spacetime point $Y$. Let $\la_j \to \infty$ and $(y_j, t_j) = Y_j \to Y = (y,t)$ such that 
\begin{equation}\label{equation convergence of limit flow 2}{\mu'_t}^{\la_j, Y_j}(E):=\la_j^n \mu'_{\la_j^{-2}t + t_j}(\la_j^{-1}E+y_j)\end{equation} 
converges to the limit flow $\mu''_t$. Now, let $Z_{ij} :=(\la_i^{-1}y_j + x_i, \la_i^{-2}t_j + t_i)$. By construction, 
$$\lim_{(i,j)\to (\infty, \infty)}Z_{ij} = X$$
Then, there exists a subsequence of $(i,j) \to (\infty, \infty)$,  relabeled by $i,j$, such that
$$\mu_t^{\la_j\la_i,\, Z_{ij}}$$
converges as $(i,j) \to (\infty, \infty)$ to a Brakke flow $\tilde{\mu}_t$. Note that we are using the rescaling notation of (\ref{equation convergence of limit flow}). We may find this subsequence by applying the Brakke compactness theorem and using boundedness of area ratios (see Section \ref{section singularities mean curvature flow}). The goal is to show that $\tilde{\mu}_t = \mu''_t$. 

For each measurable $E \subset \bR^{n+k}$ and each $j \geq 1$,
\begin{align*}\label{equation first limit exists pointwise}
    \lim_{i \to \infty} \mu_t^{\la_j\la_i,\,Z_{ij}}(E) &= \la_j^n \lim_{i \to \infty} \la_i^n \mu_{\la_i^{-2}(\la_j^{-2}t+t_j)+t_i}(\la_i^{-1}(\la_j^{-1}E + y_j) + x_i)\\
    &= \la_j^n \mu'_{\la_j^{-2}t + t_j}(\la_j^{-1}E+y_j)
\end{align*}
where this limit exists by the fact that (\ref{equation convergence of limit flow}) converges to $\mu'_t$ as $\la_i \to \infty$. Then, by the fact that (\ref{equation convergence of limit flow 2}) converges to $\mu''_t$,
\begin{equation*}\label{iterated limit limit flows}
     \lim_{j \to \infty} \lim_{i \to \infty} \mu_t^{\la_j\la_i,\,Z_{ij}}(E) = \mu''_t(E) 
\end{equation*}
We recall a well-known lemma about double limits: if $\lim_{(i,j) \to (\infty, \infty)} f(i,j)$ exists and if for each $j$, $\lim_{i \to \infty}f(i,j)$ exists, then 
$$\lim_{(i,j) \to (\infty, \infty)} f(i,j) = \lim_{j \to \infty}\lim_{i \to \infty} f(i,j)$$
We have verified these conditions for limits of $\mu_t^{\la_j\la_i, \, Z_{ij}}(E)$ for each $E$, so we conclude that for each $t$,
$$\tilde{\mu}_t = \lim_{(i,j) \to (\infty, \infty)} \mu_t^{\la_j \la_i, \, Z_{ij}} = \lim_{j \to \infty} \lim_{i \to \infty} \mu_t^{\la_j\la_i,\,Z_{ij}} = \mu''_t$$
This shows that the limit flow $\tilde{\mu}_t$ equals $\mu''_t$ as Radon measures. Convergence as Brakke flows also requires convergence as varifolds at almost every time. The same argument works to show varifold convergence of $\mu_t^{\la_j \la_i, \, Z_{ij}}$ to $\mu''_t$ at almost every $t$. This proves that the limit flow $\mu''_t$ of $\mu'_t$ is a limit flow of $\mu_t$.
\end{proof}

Under the assumptions of Corollary \ref{corollary limit flows of BFEs}, let $\mu'_t$ be a limit flow at a spacetime point $X$, with $0<\tau(X)<T$. Recall the definition of $T$ in the statement of Corollary \ref{corollary limit flows of BFEs}. By Lemma \ref{lemma limit flow of limit flow is limit flow}, no tangent flow of $\mu'_t$ is a static or quasistatic plane with multiplicity $\geq 2$. Indeed, if such a tangent flow existed, then by Lemma \ref{lemma limit flow of limit flow is limit flow}, there would be a limit flow at $X$ which is a higher multiplicity plane. By assumption, we have that $T< T_{\mult}^*$, so no limit flow at $X$ is a higher multiplicity plane. This implies that $T_{\mult} = \infty$ for $\mu'_t$, i.e.\ no tangent flows of $\mu'_t$ are planes of multiplicity $\geq 2$. 

Also, $\mu_t$ is unit regular. In the case that $(M_0, \mu_t)$ satisfies the General Assumption for $t<T \leq T_{\disc}$, this follows from Proposition \ref{proposition no quasistatic mult 1 planes}. The limit of unit regular flows is unit regular~\cite[Appendix B]{HershkovitsWhite17}~\cite[Section 7]{White05}. We are taking a limit flow at $\tau(X)<T$, and $\mu_t$ is unit regular for $t<T$. So, $\mu'_t$ is unit regular as well. 

Finally, $\mu_t$ has bounded area ratios, so $\mu'_t$ has bounded area ratios as well.

Recall that $\mu'_t$ is defined for $t \in (-\infty, \infty)$. We have found that $\mu'_t$ is unit regular, has bounded area ratios, and has no planar tangent flows of multiplicity $\geq 2$, i.e. $T_{\mult}=\infty$. In particular, $\cH^n(\cT^+_{\mult}(t))=0$ for $t < T_{\mult}=\infty$. By Theorem \ref{theorem main result without unit regularity}, $t \mapsto \mu_t'(\phi)$ is continuous for $t \in (-\infty, \infty)$. Note that Theorem \ref{theorem main result without unit regularity} is written for flows with initial time $t=0$, but this can be applied for any initial time. 

Thus, for a limit flow $\mu'_t$ at $X$ with $\tau(X)<T$, $t \mapsto \mu_t(\phi)$ is continuous for $t \in (-\infty, \infty)$ and each $\phi \in C^2_c(\bR^{n+k}, \bR_{\geq 0})$.

Suppose in addition that $\mu_t'$ has a countable set of singular times in $(-\infty, \infty)$. Then, we may apply Theorem \ref{theorem main assumption BFE before tau} so that for $\phi \in C^2_c(\bR^{n+k}, \bR_{\geq 0})$ and $-\infty < s,t <\infty$,
    $$\mu'_{t}(\phi) - \mu'_{s}(\phi) = \int_{s}^t \int -\phi |\vec{H}_{\mu'_{s'}}|^2 + \na \phi \cdot \vec{H}_{\mu'_{s'}} \,d\mu'_{s'}\,ds'$$
This completes the proof of Corollary \ref{corollary limit flows of BFEs}.

%\linespread{1.02}
\bibliographystyle{amsplain}
\bibliography{bibliography}

\end{document}